\theoremstyle{plain} 
\newtheorem{theorem}{Theorem}[section] 
\newtheorem{corollary}[theorem]{Corollary} 
\newtheorem{lemma}[theorem]{Lemma}
\newtheorem{defn}[theorem]{Definition}
\newtheorem{open}[theorem]{Open Problem}
\theoremstyle{definition}
\theoremstyle{remark}
\numberwithin{equation}{section} 
\newcommand{\R}{\ensuremath{\mathbb{R}}}
\newcommand{\N}{\ensuremath{\mathbb{N}}}
\newcommand{\C}{\ensuremath{\mathbb{C}}}
\renewcommand{\epsilon}{\varepsilon}
\newcommand{\e}{\varepsilon}
\renewcommand{\leq}{\leqslant}
\renewcommand{\le}{\leqslant}
\renewcommand{\ge}{\geqslant}
\begin{document}

\keywords{Nonlocal minimal surfaces, fractional equations, singular boundary behavior, regularity theory, stickiness phenomena.}
\subjclass[2010]{35S15, 34A08, 35R11, 35J25, 49Q05.}

\title[Flexibility of linear equations and
rigidity of minimal graphs]{Boundary properties of fractional objects:\\
flexibility of linear equations\\
and rigidity of minimal graphs}

\author[Serena Dipierro,
Ovidiu Savin, and Enrico Valdinoci]{Serena Dipierro${}^{(1)}$\and
Ovidiu Savin${}^{(2)}$
\and
Enrico Valdinoci${}^{(1)}$}

\maketitle

{\scriptsize \begin{center} (1) -- Department of Mathematics and Statistics\\
University of Western Australia\\ 35 Stirling Highway, WA6009 Crawley (Australia)\\
\end{center}
\scriptsize \begin{center} (2) --
Department of Mathematics\\
Columbia University\\
2990 Broadway, NY 10027
New York (USA)
\end{center}
\bigskip

\begin{center}
E-mail addresses:
{\tt serena.dipierro@uwa.edu.au},\\
{\tt savin@math.columbia.edu},\\
{\tt enrico.valdinoci@uwa.edu.au}
\end{center}
}
\bigskip\bigskip

\begin{abstract}
The main goal of this article is to understand the trace properties
of nonlocal minimal graphs in~$\R^3$, i.e. nonlocal minimal surfaces with a graphical structure.

We establish that at any boundary points at which 
the trace from inside happens to coincide with
the exterior datum, also the tangent planes
of the traces necessarily coincide with those of the exterior datum.

This very rigid geometric constraint is in sharp contrast with the case
of the solutions of the linear equations driven by the fractional Laplacian,
since we also show that, in this case, the fractional normal
derivative can be prescribed arbitrarily, up to a small error.

We remark that, at a formal level,
the linearization of the trace of a nonlocal minimal graph
is given by the fractional normal derivative of a fractional Laplace problem,
therefore the two problems are formally related. Nevertheless, the nonlinear equations of fractional mean curvature type
present very specific properties which are strikingly different from
those of other problems of fractional type
which are apparently similar, but
diverse in structure, and the nonlinear
case given by the nonlocal minimal graphs
turns out to be significantly more rigid than its linear counterpart.
\end{abstract}

\section{Introduction}

\subsection{Boundary behavior of fractional objects}

This article investigates the geometric properties
at the boundary of solutions of fractional problems.
Two similar, but structurally significantly different, situations are taken
into account. On the one hand, we will consider the solution
of the {\em linear fractional equation}
\begin{equation*} \begin{cases}
(-\Delta)^\sigma u=0 & {\mbox{ in }} B_1\cap \{x_n>0\},\\
u=0 & {\mbox{ in }} \{x_n<0\},\end{cases}\end{equation*}
where~$\sigma\in(0,1)$, and, for~$x'=(x_1,\dots,x_{n-1})$
with~$|x'|<1$, we consider the ``fractional boundary derivative''
\begin{equation}\label{56:62734899fi02}
\displaystyle\lim_{x_n\searrow0} \frac{u(x',x_n)}{x_n^\sigma}
.\end{equation}
Interestingly, the function in~\eqref{56:62734899fi02}
plays an important role in understanding fractional equations,
see~\cite{MR3168912}. In particular,
while classical elliptic equations are smooth up to the boundary,
the solutions of fractional equations with prescribed exterior
datum are in general not better
than H\"older continuous with exponent~$\sigma$,
and therefore the function in~\eqref{56:62734899fi02}
is the crucial ingredient to detect the growth of the solution in the vicinity
of the boundary.

As a first result, we will show here that, roughly speaking,
the function in~\eqref{56:62734899fi02} can be {\em arbitrarily prescribed,
up to an arbitrarily small error}. That is, one can construct solutions
of linear fractional equations whose fractional boundary derivative
behaves in an essentially arbitrary way.\medskip

Then, we turn our attention to the boundary property of {\em nonlocal minimal
graphs}, i.e. minimizers of the fractional perimeter functional which possess
a graphical structure. In this case, we show that the boundary properties
are subject to severe geometric constraints, in sharp contrast with
the case of fractional equations.

First of all, the continuity properties of nonlocal minimal graphs are very
different from those of the solutions of fractional equations, since we have
established in~\cites{MR3516886, MR3596708} that
nonlocal minimal graphs are not necessarily continuous at the boundary.
In addition, the boundary discontinuity of nonlocal minimal graphs in the plane
happens to be a ``generic'' situation, as we have recently proved in~\cite{2019arXiv190405393D}.

The focus of this article is on the three-dimensional setting,
i.e. the case
in which the graph is embedded in~$\R^3$.
In this situation, the graph can be continuous at a given point,
but the discontinuity may occur along the trace, at nearby points.
More precisely, we will look at a function~$u:\R^2\to\R$, which is an $s$-minimal graph in~$(-2,2)\times(0,4)$
and is such that~$u=0$ in~$ (-2,2)\times(-h,0)$, for some~$h>0$.
In this setting, we will consider its trace along~$\{x_2=0\}$, namely we consider
the function
\begin{equation*}
\lim_{x_2\searrow0}u(x_1,x_2).
\end{equation*}
The main question that we address in this article is
precisely
whether or not {\em the trace of a nonlocal minimal graph possesses
any distinctive feature or satisfies any particular geometric
constraint}. 

We will prove that, differently from the case of the linear equations
(and also in sharp contrast to the case of classical minimal surfaces),
the traces of nonlocal minimal graphs {\em cannot
have an arbitrary shape}, and, in fact, {\em matching points from the
two sides
must necessarily occur with horizontal tangencies}.
\medskip

This result relies on a classification theory for homogeneous
graphs, since we will show that, in this case, {\em the matching at the origin
is sufficient to make a nonlocal minimal graph trivial}.
\medskip

In the rest of this introduction, we will present the precise
mathematical framework in which we work and provide the formal statements
of our main results.

\subsection{Boundary flexibility of linear fractional equations}

We discuss now the case of fractional linear equations,
showing that the fractional boundary derivative of the solutions
can be essentially arbitrarily prescribed, up to a small error.
For this,
we denote by~$B'_r$ the $(n-1)$-dimensional
ball of radius~$r$ centered at the origin, namely
$$ B'_r := \{ x'\in\R^{n-1} {\mbox{ s.t. }} |x'|<r\}.$$
As customary, given~$\sigma\in(0,1)$, we define the fractional Laplacian as
$$ (-\Delta)^\sigma u(x):=\int_{\R^n}\frac{2u(x)-u(x+y)-u(x-y)}{|y|^{n+\sigma}}
\,dy.$$
Then, we have:

\begin{theorem}\label{FLFL}
Let~$n\ge2$, $\sigma\in(0,1)$, $k\in\N$ and~$f\in C^k(\overline{B_1'})$. Then, for every~$\e>0$
there exist~$f_\e\in C^k(\R^{n-1})$ and~$u_\e\in C(\R^n)$ such that
\begin{equation}\label{56:01} \begin{cases}
(-\Delta)^\sigma u_\e=0 & {\mbox{ in }} B_1\cap \{x_n>0\},\\
u_\e=0 & {\mbox{ in }} \{x_n<0\},\end{cases}\end{equation}
\begin{equation}\label{56:02}
\displaystyle\lim_{x_n\searrow0} \frac{u_\e(x',x_n)}{x_n^\sigma}=f_\e(x')\qquad{\mbox{ for all }}
x'\in B_1',\end{equation}
and
\begin{equation}\label{56:03}
\| f_\e-f\|_{C^k(\overline{B_1'})}\le\e.
\end{equation}
\end{theorem}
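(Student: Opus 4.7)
I plan to prove Theorem~\ref{FLFL} by a Hahn--Banach duality argument combined with a real-analyticity-based unique continuation step, in the spirit of the ``all functions are locally $s$-harmonic up to a small error'' philosophy that the authors have developed in related contexts. First, I would parametrize admissible solutions by their exterior data: for each smooth $g$ supported in $\{x_n>0\}\setminus B_1$, let $u_g$ solve $(-\Delta)^\sigma u_g=0$ in $B_1\cap\{x_n>0\}$ with $u_g=g$ on $\{x_n>0\}\setminus B_1$ and $u_g=0$ on $\{x_n\le 0\}$. Boundary regularity at the flat portion $\{x_n=0\}\cap B_1$ ensures that the map $T:g\mapsto \lim_{x_n\searrow 0}u_g(\cdot,x_n)/x_n^\sigma$ takes values in $C^k(\overline{B_1'})$, so the theorem reduces to showing that the image of $T$ is $C^k$-dense in $C^k(\overline{B_1'})$.

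To establish the density, I would argue by contradiction via Hahn--Banach: if the image of $T$ failed to be $C^k$-dense, there would exist a nonzero distribution $\mu$ of order at most $k$ on $\overline{B_1'}$ with $\langle\mu,Tg\rangle=0$ for every admissible $g$. Using a Poisson representation $u_g(x)=\int g(y)P(x,y)\,dy$ for the mixed Dirichlet problem and setting $K(x',y):=\lim_{x_n\searrow 0}P((x',x_n),y)/x_n^\sigma$, this annihilation reads
\begin{equation*}
\int g(y)\,\Psi(y)\,dy=0 \qquad\text{where}\qquad \Psi(y):=\langle\mu_{x'},K(x',y)\rangle.
\end{equation*}
The arbitrariness of $g$ then forces $\Psi\equiv 0$ on $\{y_n>0\}\setminus\overline{B_1}$.

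Because the fundamental solution of $(-\Delta)^\sigma$ is real-analytic off its singularity, $K(x',y)$ is real-analytic in $y$ on $\R^n\setminus\overline{B_1'}$, hence so is $\Psi$. The open set $\R^n\setminus\overline{B_1'}$ is connected (as $\overline{B_1'}$ has codimension $1$ in $\R^n$, with $n\ge 2$), so analyticity propagates the vanishing of $\Psi$ from the open set $\{y_n>0\}\setminus\overline{B_1}$ to all of $\R^n\setminus\overline{B_1'}$, and in particular to the lower half-space. A concluding identification step, for instance reading off the singular asymptotics of $K(x',y)$ as $y$ approaches a point of $\overline{B_1'}$ from above and from below, or applying a Green-type identity, then converts the global vanishing of $\Psi$ into $\mu=0$, contradicting the choice of $\mu$.

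The main obstacle I foresee is making the duality rigorous for this mixed problem: one needs a sufficiently explicit Poisson representation on $B_1\cap\{x_n>0\}$, boundary regularity at $\{x_n=0\}\cap B_1$ sharp enough to guarantee continuity of $T$ into $C^k$ and convergence of the limit defining $K$, and tight control of $K$ near the singular set $\overline{B_1'}$ in order to run both the analytic continuation and the final $\Psi\equiv 0\Rightarrow\mu=0$ step. A route that avoids building the Poisson kernel explicitly is to pair compactly supported test functions $v$ in $\{x_n>0\}\setminus B_1$ directly against the dual object associated with $\mu$ and to exploit that $(-\Delta)^\sigma v$ is real-analytic inside $B_1\cap\{x_n>0\}$; this should recover the contradiction using only the analyticity of the fundamental solution rather than a detailed description of $K$.
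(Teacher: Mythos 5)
Your plan is a genuinely different route from the paper's: the paper never invokes Hahn--Banach on $C^k(\overline{B_1'})$ or any Poisson-kernel representation. Instead it reduces to monomials by linearity and Stone--Weierstra{\ss}, proves a purely \emph{finite-dimensional} span statement (Lemma~\ref{fan}: the arrays ${\mathcal{D}}^k\phi(0)$ of derivatives at a single point exhaust $\R^{N_k}$), and obtains that span constructively from the explicit homogeneous $\sigma$-harmonic functions on cones of Ba\~nuelos--Bogdan, translated and combined with the boundary Harnack inequality and the identity principle for polynomials; the $C^k$-closeness then comes from a scaling $u_\e(x)=\e^{-\sigma-|\mu|}u_\star(\e x)$ that makes the Taylor remainder $O(\e)$. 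So the comparison is: your scheme, if completed, would give density of an infinite-dimensional image directly, whereas the paper trades that for a one-point, finite-dimensional statement plus rescaling, which avoids the Poisson kernel of the mixed problem altogether.

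However, as written your proposal has a genuine gap at its decisive step: the implication $\Psi\equiv 0\Rightarrow\mu=0$. All the difficulty of the boundary version of the ``all functions are $\sigma$-harmonic'' argument is concentrated there, and you dispose of it with ``reading off the singular asymptotics of $K$ from above and from below, or applying a Green-type identity.'' Note first that $K(x',\cdot)$ is only defined (and only analytic) on $\R^n\setminus(\overline{B_1}\cap\{x_n\ge0\})$, not on $\R^n\setminus\overline{B_1'}$, so there is no ``approach from above'': points of $B_1'$ can only be reached from the lower half-space. There, $K(x',y)$ blows up like $|y_n|^{-\sigma}k(x',y')$ with $k(x',y')\asymp|x'-y'|^{-n}$, so the vanishing of $\Psi$ in $\{y_n<0\}$ only yields $\langle\mu,k(\cdot,y')\rangle=0$, first for $y'$ \emph{outside} $\overline{B_1'}$ (where the interchange of limit and pairing is legitimate); concluding $\mu=0$ from this requires a further unique-continuation argument for a Riesz-type kernel acting on distributions of order $k$, which you neither state nor prove. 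A second, concrete obstruction sits at the very first step: near the corner $\partial B_1\cap\{x_n=0\}$ the boundary behavior of $u_g$ is governed by the cone homogeneity $\beta$ of Lemma~\ref{BAN}, which is generically different from $\sigma$, so the trace $Tg$ need not belong to $C^k(\overline{B_1'})$ at all (it may degenerate or blow up like ${\rm dist}(x',\partial B_1')^{\beta-\sigma}$). This is repairable by solving on a larger half-ball and restricting, but it must be addressed; it is, somewhat ironically, exactly the corner phenomenon that the paper turns into its main tool.
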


On the one hand, Theorem~\ref{FLFL} falls in the research line
opened in~\cite{MR3626547}
according to which ``all functions are $\sigma$-harmonic up to a small error'',
namely it states an interesting flexibility offered by solutions
of fractional equations which can adapt themselves
in order to capture essentially any prescribed behavior.
This flexible feature has been recently studied
in several fractional contexts, including time-fractional derivatives,
non-elliptic operators, and higher order operators, see~\cites{MR3716924, MR3935264, KRYL, CAR, CARBOO}.
In addition, the flexible properties of fractional equations
can be effectively exploited to construct interesting counterexamples,
see~\cite{MR3783214}, and they have consequences
in concrete scenarios involving also
mathematical biology and inverse problems, see~\cites{MR3579567, MR3774704}.
Differently from the previous literature, Theorem~\ref{FLFL}
aims at detecting a ``boundary'' flexibility of fractional equations,
rather than an ``interior'' one.

On the other hand, differently from all the other fractional flexibility results
in the literature, which have no counterpart for the case of the classical Laplacian,
Theorem~\ref{FLFL} shares a common treat with the Laplace equation
and possesses a full classical analogue (we present its
classical counterpart in Appendix~\ref{APPA}).

\subsection{Boundary rigidity of fractional minimal surfaces}

We now discuss the boundary behavior of $s$-minimal
surfaces and we will show its striking differences
with respect to the linear fractional equations.
To this end, we recall the setting introduced in~\cite{MR2675483}.
Given~$s\in(0,1)$, we consider the interaction
of two disjoint (measurable) sets~$F$, $G\subseteq\R^N$ defined by
\begin{equation} \label{nNdi}
{\mathcal{I}}_s(F,G):=\iint_{F\times G}\frac{dx\,dy}{|x-y|^{N+s}}.\end{equation}
Given a bounded reference domain~$\Omega$ with Lipschitz boundary,
we define the $s$-perimeter of a set~$E\subseteq\R^N$ in~$\Omega$ by
$$ {\rm Per}_s(E;\Omega):=
{\mathcal{I}}_s(E\cap\Omega,E^c\cap\Omega)+
{\mathcal{I}}_s(E\cap\Omega,E^c\cap\Omega^c)+
{\mathcal{I}}_s(E\cap\Omega^c,E^c\cap\Omega).$$
As customary, we have used here the complementary set
notation~$E^c:=\R^N\setminus E$.

\begin{defn}
Let~$\Omega\subset\R^N$ be bounded and with Lipschitz boundary.
Let~$E\subseteq\R^N$.
We say that~$E$ is~$s$-minimal in~$\Omega$ if~${\rm Per}_s(E;\Omega)<+\infty$ and
\begin{equation}\label{MINIMIZZ} {\rm Per}_s(E;\Omega)\le {\rm Per}_s(E';\Omega)\end{equation}
for every~$E'\subseteq\R^N$ such that~$E'\cap\Omega^c=E\cap\Omega^c$.

Moreover, given~$U\subset\R^N$,
we say that~$E$ is locally~$s$-minimal in~$U$ 
if it $s$-minimal in~$
\Omega$, for every~$\Omega$
which is bounded, with Lipschitz boundary
and strictly contained in~$U$.
\end{defn}

We remark that one can make sense of
the minimization procedure also in unbounded domains by saying that~$E$ is
locally~$s$-minimal in a (possibly unbounded) domain~$\Omega$
if~$E$ is $s$-minimal in every bounded and Lipschitz domain~$\Omega'\Subset\Omega$
(see Section~1.3 in~\cite{MR3827804} for additional details
on these minimality notions).
\medskip 

The regularity theory of nonlocal minimal surfaces
is a fascinating topic of investigation, still possessing a number
of fundamental open problems. We refer to~\cites{MR3090533, MR3107529, MR3331523, BV} for interior regularity results,
\cites{MR3798717, CCC} for a precise discussion on stable nonlocal cones,
and~\cites{MR3588123, MR3824212} for recent surveys containing
the state of the art of this problem.
\medskip

A particularly important case of locally $s$-minimal
sets is given by the ones which have a graph structure.
Namely, given~$\Omega_0\subseteq\R^n$ and~$u:\Omega_0\to\R$,
we let
\begin{equation}\label{LEYUDEF} E_u:=\{ X=(x,x_{n+1}) {\mbox{ s.t. $x\in\Omega_0$ and $ x_{n+1}<u(x)$}}\}.\end{equation}
With respect to the notation in~\eqref{nNdi},
we are taking here~$N=n+1$.

\begin{defn}
We say that~$u$ is an $s$-minimal graph in~$\Omega_0$
if~$E_u$ is a locally $s$-minimal set in~$\Omega_0\times\R$.
\end{defn}

Interestingly, $s$-minimal graphs enjoy suitable Bernstein-type
properties, see~\cites{MR3680376, PISA, 2018arXiv180705774C},
and they have a
smooth interior regularity theory, as proved in~\cite{MR3934589}.

See also~\cite{NOCHETTO} for several very precise simulations on nonlocal minimal graphs
and a sharp numerical analysis of their properties.
\medskip

With this, we are ready to state the main result of this article,
which gives some precise geometric conditions on the trace of nonlocal minimal
graphs. We establish that the trace graph has necessarily {\em zero derivatives
when the trace crosses zero}. The precise result that we have is the following one:

\begin{theorem}\label{NLMS}
Let~$u$ be an $s$-minimal graph in~$(-2,2)\times(0,4)$.
Assume that there exists~$h>0$
such that~$u=0$ in~$ (-2,2)\times(-h,0)$,
and let
\begin{equation}\label{MR3516886EQ}
f(x_1):=\lim_{x_2\searrow0}u(x_1,x_2).
\end{equation}
Then, there exist~$\delta_0\in\left(0,\frac1{100}\right)$
and~$C>0$
such that if~$\zeta_0\in\left(-\frac{3}2,\frac32\right)$ is such that~$f(\zeta_0)=0$, then
\begin{equation}\label{VANP} |u(x)|\le C\,|x-(\zeta_0,0)|^{\frac{3+s}2}\end{equation}
for every~$x\in B_{\delta_0}(\zeta_0,0)$, and, in particular,
\begin{equation}\label{THAN-2}
{\mbox{$f'(\zeta_0)=0$.}}\end{equation}
\end{theorem}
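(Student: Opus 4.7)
The plan is to use a blow-up plus classification argument. After translating so that $\zeta_0=0$, the hypotheses $f(0)=0$ and $u\equiv 0$ in $(-2,2)\times(-h,0)$ tell me that $(0,0,0)\in\R^3$ is a point of $\partial E_u$ at which the surface meets the horizontal plane $\{x_3=0\}$ continuously from both sides of the line $\{x_2=0\}$. The first move is to rescale: set $E_r := r^{-1} E_u$ and let $r\searrow 0$. By the standard compactness and monotonicity theory for $s$-minimal sets, any $L^1_{\mathrm{loc}}$-subsequential limit $E_\infty$ is an $s$-minimal cone. The graphical structure and the one-sided Dirichlet datum pass to the limit, so $E_\infty$ satisfies $E_\infty\cap\{x_2<0\}=\{x_2<0,\,x_3<0\}$, while $E_\infty\cap\{x_2>0\}$ is the subgraph of a $1$-homogeneous function $v$ on the upper half-plane with matching condition $\lim_{x_2\searrow 0}v(x_1,x_2)=0$ for every $x_1$.

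The heart of the argument is then the rigidity claim that $v\equiv 0$, so that every blow-up is the flat half-space $\{x_3<0\}$. This is the content of the phrase ``the matching at the origin is sufficient to make a nonlocal minimal graph trivial'' announced in the introduction. I would prove it by combining three ingredients: the interior smoothness of $s$-minimal graphs, which makes $v$ a classical solution of the nonlocal mean curvature equation in the upper half-plane; a sliding/translation argument comparing $E_\infty$ to its vertical translates $E_\infty+te_3$, using graphicality and minimality to obtain a strong maximum principle in the $x_3$-direction; and the $1$-homogeneity together with the mixed Dirichlet/matching datum on $\{x_2=0\}$, which allow one to exclude nontrivial $v$ by a Hopf-type boundary analysis or by direct comparison with an explicit half-space barrier.

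Once uniqueness and triviality of the blow-up are in hand, a quantitative improvement-of-flatness iteration should upgrade this qualitative information to the polynomial decay \eqref{VANP}. The exponent $(3+s)/2$ ought to be pinned down by sandwiching $E_u$ between barriers of the form $\pm C\,|(x_1,x_2)|^{(3+s)/2}$ (multiplied by a cut-off): this is precisely the power at which the fractional mean curvature of such a barrier has the correct sign near the origin while its trace on $\{x_2<0\}$ remains zero, so that it is admissible as a comparison surface in the definition of $s$-minimality. The derivative statement \eqref{THAN-2} is then an immediate consequence of \eqref{VANP}, since $(3+s)/2>1$.

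The hardest step, I expect, is the classification of the $1$-homogeneous blow-up. No general theory of $s$-minimal cones is available even in these dimensions, so the argument must genuinely exploit both the graphical hypothesis (which forbids vertical tangents and more exotic cones) and the one-sided Dirichlet datum (which drives triviality through a boundary maximum principle). Constructing and calibrating the barrier with the sharp exponent $(3+s)/2$, and verifying its fractional mean curvature sign, is a secondary but more computational obstacle.
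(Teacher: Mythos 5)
Your overall skeleton (blow-up at $(\zeta_0,0)$, classification of the homogeneous limit, improvement of flatness) matches the paper's strategy at the coarsest level, but there are genuine gaps at each stage. First, the blow-up limit is neither automatically graphical nor does the matching condition pass to the limit at every $x_1$: the hypothesis $f(\zeta_0)=0$ only yields continuity of the limit \emph{at the origin}, and a priori the limit may degenerate to $E_{00}\cap\{x_2>0\}=\varnothing$ or to $\{x_2>0\}$ (vertical portions coming from a jump of the trace). Ruling these out, and showing that continuity at the single point $0$ forces the homogeneous graph to vanish on all of $\{x_2=0\}$, is precisely Theorem~\ref{QC}, whose proof occupies most of the paper. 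Your sketch of that rigidity (vertical sliding to get a maximum principle in the $x_3$-direction, plus a ``Hopf-type boundary analysis'') does not engage with the actual obstruction: a homogeneous graph with zero datum in $\{x_2<0\}$ and a nonzero trace at $(1,0)$ develops a vertical tangency there (by the obstacle-problem regularity of Caffarelli--De Silva--Savin), which gives $\partial_2 u(1,t)\to+\infty$ as $t\searrow0$; the paper converts this, via Euler's relation $u=\nabla u\cdot x$ and an elementary calculus lemma, into the existence of a maximum point of $\partial_1 u$ in $\{x_2>0\}$, and then differentiates the Euler--Lagrange equation in the $e_1$-direction (Lemma~\ref{HAusjc293ierf}) to reach a contradiction. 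Nothing in your proposal replaces this mechanism, and sliding against vertical translates essentially only re-derives graphicality, which is already assumed.

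Second, the exponent $\frac{3+s}2$ does not come from a homogeneous barrier of that power whose fractional mean curvature ``has the correct sign''. It is the second term in the boundary expansion of the \emph{linearized} problem: the vertical rescalings $u/\e$ converge to a solution of $(-\Delta)^{\frac{1+s}2}\bar u=0$ in the upper half-plane with $\bar u=\bar a\,x_2^{\frac{1+s}2}+O(|x|^{\frac{3+s}2})$, and the entire content of the quantitative step is to show that $\bar a=0$. If $\bar a\ne0$ the decay would only be of order $|x|^{\frac{1+s}2}$, which is worse than Lipschitz and would not even yield \eqref{THAN-2}. The paper kills $\bar a$ by sliding a corner-type barrier (Lemma~\ref{RCAIcgaue023}) under the graph and invoking the blow-up classification (Corollary~\ref{COR:ALTE}) a second time; your proposal does not address this coefficient at all, even though its vanishing is exactly the ``rigidity versus flexibility'' phenomenon the theorem is about.
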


We remark that the existence of the limit
in~\eqref{MR3516886EQ}
is warranted by Theorem~1.1 in~\cite{MR3516886}.\medskip

\begin{figure}
    \centering
    \includegraphics[width=8.9cm]{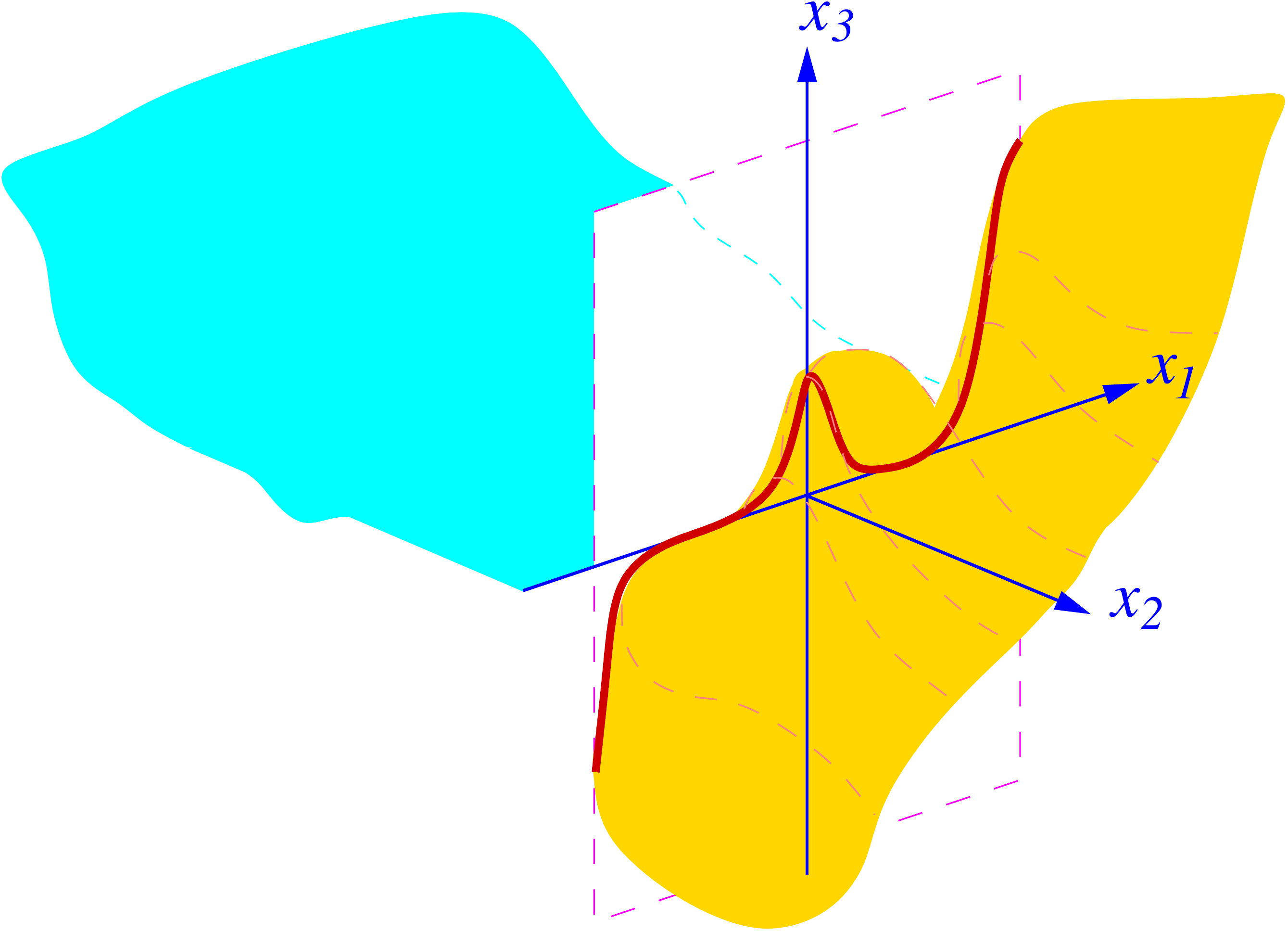}
    \caption{\em {{A nonlocal minimal graph, in the light of Theorem~\ref{NLMS}. The red curve represents
    the graph of the function~$f$ defined in~\eqref{MR3516886EQ}.}}}
    \label{DOMA}
\end{figure}

The statement of Theorem~\ref{NLMS} is described\footnote{
We observe that drawing the trace
along~$\{x_2=0\}$ 
in Figure~\ref{DOMA} in a communicative way
is not completely easy, since
the vertical tangencies make the nonlocal
minimal surface in~$\{x_2>0\}$ hide its own trace.} in Figures~\ref{DOMA}
and~\ref{DOMA-2}. With respect
to this, we stress the remarkable geometric property
given by the {\em vanishing of the gradient of the trace
at the
zero crossing points}. We observe that this situation is completely
different with respect to the one arising for solutions of linear equations,
and one can compare the structurally ``rigid'' geometry imposed by
Theorem~\ref{NLMS} with the almost completely arbitrariness
arising in Theorem~\ref{FLFL}.\medskip

We remark that, at a formal level,
the settings in Theorems~\ref{FLFL} and~\ref{NLMS}
are strictly related, since
the linearization of the trace of a nonlocal minimal graph
is given by the fractional normal derivative of a fractional Laplace problem.
More specifically, 
when one takes into account the improvement of flatness argument
for an $\e$-flat nonlocal
minimal graph~$u$
(see the forthcoming Lemma~\ref{NOANpierjfppp34}),
one sees that~$u/\e$ shadows a function~$\bar u$,
which is a solution of~$(-\Delta)^{\sigma}\bar u=0$ in~$\{x_2>0\}$,
with~$\sigma:=\frac{1+s}2$: in this context the first order of~$\bar u$
near the origin takes the form~$\bar a \,x_2^{\sigma}$, for some~$a\in\R$.
Comparing with~\eqref{56:02}, one has that~$\bar a$ is exactly the
fractional normal derivative of the solution of a linear equation,
which, in view of Theorem~\ref{FLFL}, can be prescribed in an essentially arbitrary way.

In this spirit, if the linearization procedure produced a ``good approximation''
of the nonlinear geometric problem, one would expect that the original nonlocal
minimal graph~$u$ is well approximated near the origin
by a term of the form~$\e\,\bar a \,x_2^{\sigma}$,
with no prescription whatsoever on~$\bar a$. Quite surprisingly, formula~\eqref{VANP}
(applied here with~$\zeta_0:=0$) tells us that this is not the case,
and the correct behavior of a nonlocal minimal graph at the boundary
{\em cannot be simply understood ``by linearization''}.
\medskip

\begin{figure}
    \centering
    \includegraphics[width=8.9cm]{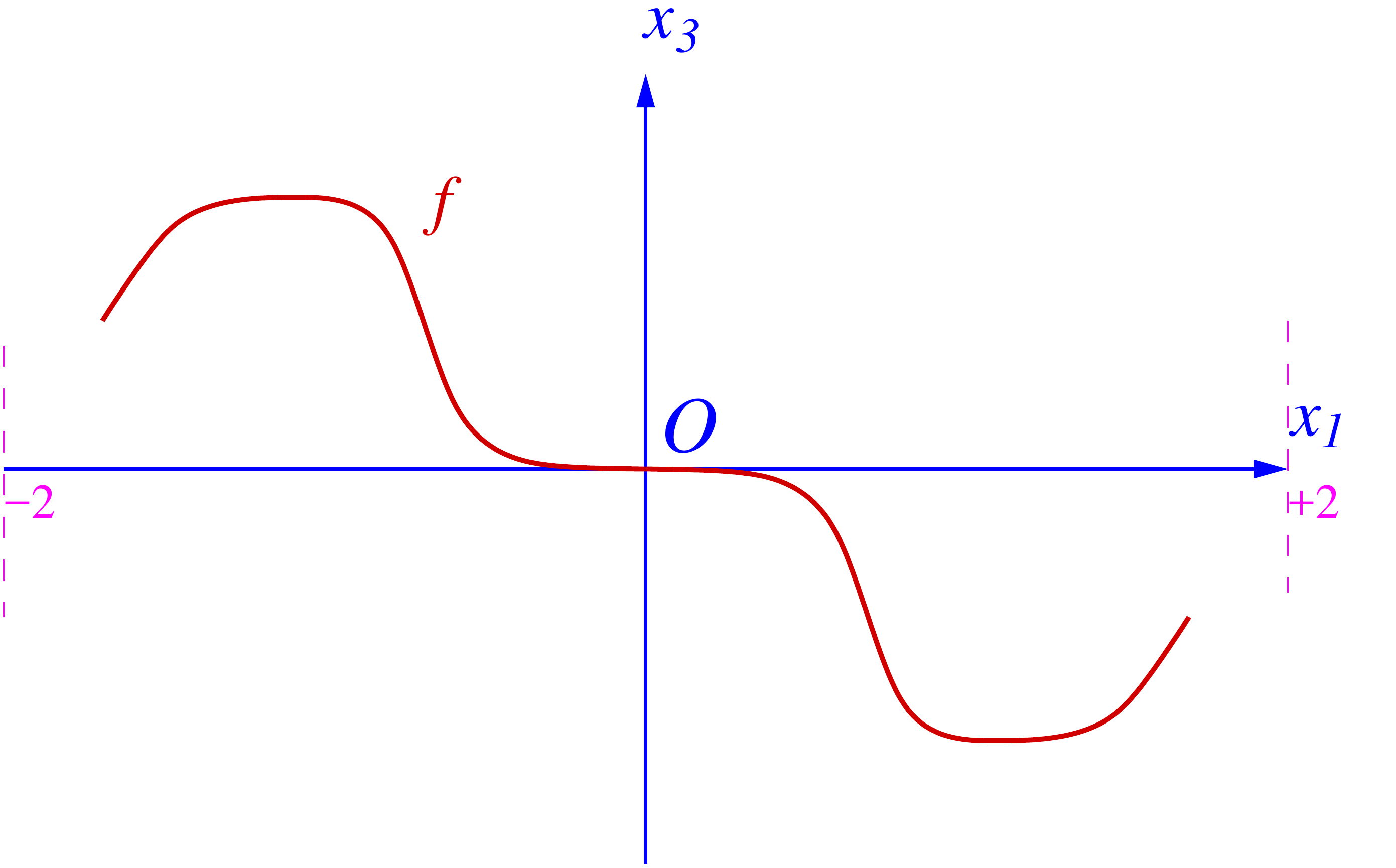}
    \caption{\em {{How we expect the graph of the function~$f$ defined in~\eqref{MR3516886EQ}
    when~$n=2$ and the exterior datum is
    $$u(x):=\chi_{(-2,0)\times(-1,-3)}(x)-
    \chi_{(0,2)\times(-1,-3)}$$
    for all~$x\in\R^2\setminus\big( (-2,2)\times(0,4)\big)$. Notice the horizontal tangency at the origin.}}}
    \label{DOMA-2}
\end{figure}

Theorem~\ref{NLMS} also reveals a {\em structural difference}
of the boundary regularity theory of fractional
minimal surfaces embedded in~$\R^{n+1}$ when~$n=2$
with respect to the case in which~$n=1$.
Indeed, when~$n=1$, an $s$-minimal
graph in which the exterior datum is attained
continuously at a boundary point is necessarily~$C^{1,\frac{1+s}2}$
in a neighborhood of such a point (see Theorem~1.2
in~\cite{2019arXiv190405393D}).
Instead, when~$n=2$, a similar result does not hold:
as a matter of fact, when~$n=2$,
\begin{itemize}\item
Theorem~\ref{NLMS}
guarantees that 
boundary points which attain the flat exterior datum
in a continuous way have necessarily horizontal tangency,\item
conversely, boundary points in which
the $s$-minimal graph experience a jump have necessarily
a vertical tangency (see~\cite{MR3532394}).
\end{itemize}
Consequently, {\em points with vertical tangency
accumulate to zero crossing points possessing horizontal tangency},
preventing a differentiable boundary regularity of the surface
in a neighborhood of the latter type of points.
\medskip

The proof of Theorem~\ref{NLMS} will require a fine understanding of
fractional minimal
homogeneous graphs.
Indeed, as a pivotal step towards the proof of Theorem~\ref{NLMS}, we establish the surprising feature
that {\em if a homogeneous fractional locally minimal graph 
vanishes in~$\{x_n<0\}$ and it is continuous at the origin,
then it necessarily vanishes at all points of~$\{x_n=0\}$}.
The precise result that we obtain is the following:

\begin{theorem}\label{QC}
Let~$u$ be an $s$-minimal graph in~$\R\times(0,+\infty)$.
Assume that
\begin{equation}\label{ADX}
{\mbox{$u(x_1,x_2)=0$ if~$x_2<0$.}}\end{equation}
Assume also that~$u$ is positively homogeneous of degree~$1$, i.e.
\begin{equation}\label{ADX6}
u(tx)=tu(x)\qquad{\mbox{ for all $x\in\R^2$ and~$t>0$.}}
\end{equation}
Suppose that
\begin{equation}\label{ADX3}
u(0):=\lim_{x\to0} u(x)=0.
\end{equation}
Then~$u(x)=0$ for all~$x\in\R^2$.
\end{theorem}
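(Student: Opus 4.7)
My plan is to combine the scale invariance of~$u$ coming from 1-homogeneity with the linearization of the nonlocal minimal surface equation at a point of continuous matching with a flat exterior datum, showing first that the boundary trace of~$u$ must vanish and then concluding by a Bernstein-type rigidity.

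By the interior regularity of $s$-minimal graphs~\cite{MR3934589} together with~\eqref{ADX6}, $u$ is smooth on~$\R\times(0,+\infty)$ and in polar coordinates reads $u(r\cos\theta,r\sin\theta)=r\,g(\theta)$ with $g\in C^\infty(0,\pi)$. The continuity assumption~\eqref{ADX3} forces $g$ to be bounded, so $M:=\|g\|_{L^\infty(0,\pi)}<+\infty$; our goal is to show $M=0$. By Theorem~1.1 of~\cite{MR3516886} the trace $f(x_1):=\lim_{x_2\to 0^+}u(x_1,x_2)$ exists, and by~\eqref{ADX6} it is 1-homogeneous, hence of the form $f(x_1)=\alpha\,x_1^+-\beta\,x_1^-$ for some~$\alpha,\beta\in\R$ (note that $f(0)=0$ automatically). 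I would then apply the forthcoming improvement-of-flatness Lemma~\ref{NOANpierjfppp34}: thanks to the continuous matching~\eqref{ADX3} at the origin, on a small ball the renormalized graph $u/M$ is close to a solution $\bar u$ of the linearized problem $(-\Delta)^{(1+s)/2}\bar u=0$ in $\{x_2>0\}$, $\bar u=0$ in $\{x_2<0\}$, whose leading-order boundary expansion at the origin has the form $\bar a\,x_2^{(1+s)/2}$. Since the exponent $(1+s)/2$ is strictly less than~$1$, a 1-homogeneous~$u$ cannot accommodate such sub-linear boundary behavior unless the prefactor vanishes; the scale invariance of~$u$ then allows one to iterate this comparison at every dyadic scale around the origin and to conclude that $f\equiv 0$, i.e.\ $\alpha=\beta=0$.

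Once the trace vanishes, the extension $\tilde u$ of~$u$ by zero to the lower half-plane is continuous across~$\{x_2=0\}$, and its subgraph is a globally $s$-minimal set in~$\R^3$, still 1-homogeneous. Invoking a Bernstein-type theorem for global $s$-minimal graphs in dimension two (see~\cites{MR3680376, 2018arXiv180705774C}), $\tilde u$ must be affine; since it vanishes on the entire lower half-plane, the only possibility is $\tilde u\equiv 0$, hence $u\equiv 0$.

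The main obstacle is the quantitative application of the improvement-of-flatness lemma in this scale-invariant setting, since the lemma typically presupposes a small flatness parameter whereas here we only know that $M<+\infty$. The key insight is that 1-homogeneity allows one to rescale the IoF comparison to arbitrary dyadic scales at the origin, and the sub-linear order $(1+s)/2$ of the linearized fractional boundary profile is incompatible with the first-order growth of a nonzero 1-homogeneous~$u$; this mismatch is morally the same mechanism behind the rigidity asserted in Theorem~\ref{NLMS}.
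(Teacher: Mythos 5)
Your overall strategy --- first force the trace to vanish, then upgrade to $u\equiv 0$ --- matches the skeleton of the paper's proof, but each of your two main steps has a genuine gap. The most serious one is the use of Lemma~\ref{NOANpierjfppp34} to kill the trace. That lemma (and the improvement of flatness it feeds into) requires the hypothesis $|u(x)|\le\e\,(2^k)^{1+\alpha}$ on $B_{2^k}$ for all dyadic scales, i.e.\ genuine $\e$-flatness with $\e$ small. For a $1$-homogeneous $u$ with $M=\|g\|_{L^\infty(0,\pi)}>0$ one only has $|u|\le M\,2^k$ on $B_{2^k}$, which already violates the hypothesis at $k=0$ unless $M\le\e$. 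Your proposed fix --- ``rescale the comparison to arbitrary dyadic scales'' --- buys nothing: a $1$-homogeneous function is invariant under the rescaling $x\mapsto u(\lambda x)/\lambda$, so it is either $\e$-flat or it is not, and no choice of scale changes this. The heuristic that the linearized profile $\bar a\,x_2^{\frac{1+s}2}$ is sublinear and therefore incompatible with $1$-homogeneity is not a proof, because the linearization is only meaningful in the flat regime; the paper itself stresses that the boundary behavior of nonlocal minimal graphs ``cannot be simply understood by linearization''. What the paper actually does at a point where the trace is nonzero (say $u(1,0)>0$) is entirely different: by the obstacle-type result of~\cite{MR3532394} the surface has a vertical tangency there and is a $C^{1,\frac{1+s}2}$ graph in the $e_2$-direction, which forces $\partial_2 u(1,t)\to+\infty$ as $t\searrow0$; combining this with the Euler identity $u=\nabla u\cdot x$ and the calculus Lemma~\ref{87923-4656} produces an interior maximum of $\partial_1 u$, which contradicts the maximum-principle rigidity Lemma~\ref{HAusjc293ierf} obtained by differentiating the fractional minimal surface equation.

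The second gap is the concluding Bernstein step. The function $u$ is only assumed to be an $s$-minimal graph in $\R\times(0,+\infty)$: competitors must agree with $E_u$ outside compact subsets of the upper half-cylinder, so the extension by zero is \emph{not} known to be an entire $s$-minimal graph, and the Bernstein-type theorems of~\cites{MR3680376, 2018arXiv180705774C} do not apply. The paper replaces this with Lemma~\ref{STEP1} (a blow-up classification at the boundary points $(\pm1,0)$, showing $u\in C^1(\R^2)$ once the trace vanishes there) followed by Corollary~\ref{STEP11}, which applies the Jacobi-field identity ${\mathcal{J}}^\sigma_E\,\nu_3=0$ at a minimum point of $\nu_3$ over $S^1$ (Lemma~\ref{nu-co}), together with $\nu_3\equiv1$ on $\{x_2\le0\}$, to force $\nu_3\equiv1$ and hence $u\equiv0$. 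Your first step (boundedness of $g$ from~\eqref{ADX3} and the $1$-homogeneity of the trace) is fine, but the two core mechanisms of the proof are missing.
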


We take this opportunity to state and discuss
{\em some new interesting research lines} opened by the results
obtained in the present paper.

\begin{open}[Vertical tangencies]\label{16}{\rm
In the setting of Theorem~\ref{NLMS}, can one construct examples
in which~$f'(\zeta)=\pm\infty$ for some~$\zeta\in\left(-\frac32,\frac32\right)$?

Namely, is it possible to construct examples of nonlocal
minimal graphs embedded in~$\R^3$ which are flat from outside
and whose trace develops vertical tangencies?

The trace of such possible pathological examples is depicted in Figure~\ref{DOMA-122}.
}\end{open}

\begin{figure}
    \centering
    \includegraphics[width=8.9cm]{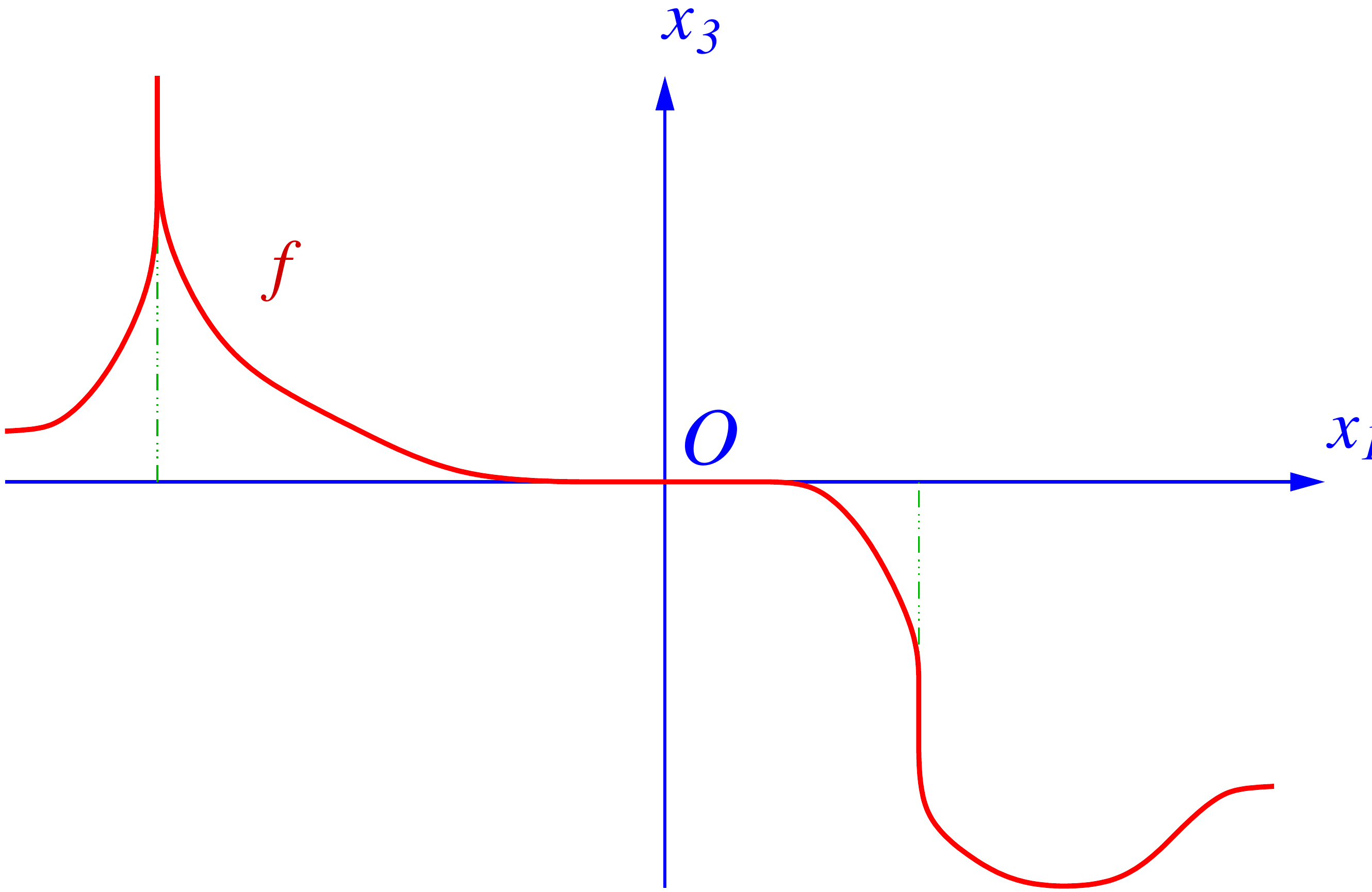}
    \caption{\em {{Open problem~\ref{16}: is it possible to
    construct nonlocal minimal graphs traces
    with vertical tangencies?}}}
    \label{DOMA-122}
\end{figure}

\begin{open}[The higher dimensional case]\label{17}{\rm
It would be interesting to determine whether or not
a result similar to Theorem~\ref{NLMS} holds true in higher
dimension. Similarly, it would be interesting to determine the possible
validity of Theorem~\ref{QC} in higher dimensions.

{F}rom the technical point of view, we observe that some of the auxiliary
results exploited towards the proof of Theorem~\ref{QC} (such as
Lemma~\ref{STEP1} and
Corollary~\ref{STEP11}) are expected to carry over in higher dimension,
therefore one can in principle try to argue by induction, supposing
that a statement such as the one of Theorem~\ref{QC} holds true in dimension~$n$
with the aim of proving it in dimension~$n+1$. The catch in this argument
is that one is led to study the points at which the gradient
of the trace attains its maximal, and this makes an important
connection between this line of research and that of Open Problem~\ref{16}.
}\end{open}

\begin{open}[Behavior at a corner of the domain]{\rm It
would be interesting to detect the behavior of a nonlocal
minimal graph and of its trace at the corners of the domain
and in their vicinity,
in particular understanding (dis)continuity and tangency properties,
possibly also in relation with the convexity or concavity of the corner.
This is related to the analysis of nonlocal minimal cones
in either convex or concave sectors with zero exterior datum.

As a first step towards it, one can try to understand
how to complete
Figure~\ref{DOMA-2} near~$x_1=\pm2$.}\end{open}

\subsection{Organization of the paper} The rest of this manuscript
is organized as follows. Theorem~\ref{FLFL}
is proved in Section~\ref{ISsta}. The arguments used
will exploit a method that we have recently introduced in~\cite{MR3626547}
to show that ``all functions are locally fractional harmonic'', and a careful
discussion of the homogeneous solutions of fractional equations
on cones, see~\cite{MR2075671, MR3810469}.

Then, in Section~\ref{15kdttd3} we present the proof of
Theorem~\ref{QC}. The arguments used here
exploit and develop a series of
fine methods from the theory of nonlocal equations,
comprising boundary Lipschitz bounds, blow-up classification results,
continuity implies differentiability results, nonlocal geometric equations
and nonlocal obstacle-type problems.

In Section~\ref{HDB} we construct a useful barrier,
that we exploit to rule out the case of boundary Lipschitz
singularities for nonlocal minimal graphs.

The proof of Theorem~\ref{NLMS} is contained in Section~\ref{657699767395535}.
Finally, in Appendix~\ref{APPA}, we point out the classical
analogue of Theorem~\ref{FLFL}.

\section{Proof of Theorem~\ref{FLFL}}\label{ISsta}

One important ingredient towards the proof of Theorem~\ref{FLFL}
consists in the construction of a homogeneous solution of a linear fractional
equation with a suitable growth from the vertex of a cone.
This is indeed a classical topic of research, which
also bridges mathematical analysis and probability, see~\cite{MR1936081, MR2075671, MR3810469}
for specific results on fractional harmonic functions on cones. In our setting,
we can reduce to the two-dimensional case (though the higher dimensional case
can be treated in a similar way), and,
for any~$\alpha>0$, we let
\begin{equation}\label{89apq0}
{\mathcal{C}}_\alpha:=\left\{ x=(x_1,x_2)\in\R^{2}
{\mbox{ s.t. $ x_2>0$ and $x_1+\displaystyle\frac{ x_2 }{\alpha }>0
$}}\right\},\end{equation}
and the result that we need is the following one:

\begin{lemma} \label{BAN}
For every~$\sigma\in(0,1)$ and every~$\vartheta\in\R$
there exist~$\alpha>0$, $\beta\in(0,2\sigma)\setminus\{ \vartheta\}$
and~$\bar{v}:S^1\to [0,+\infty)$ with~$\| \bar{v}\|_{L^\infty(S^{1})}=1$
such that the function
\begin{equation}\label{eca-a} \R^2\ni x\mapsto v(x):=|x|^\beta\,\bar{v}\left( \frac{x}{|x|}\right)\end{equation}
satisfies
\begin{equation}\label{eca} \begin{cases}
(-\Delta)^\sigma v = 0 & {\mbox{ in }}{\mathcal{C}}_\alpha,\\
v=0 & {\mbox{ in }}\R^2\setminus{\mathcal{C}}_\alpha.\end{cases}\end{equation}
\end{lemma}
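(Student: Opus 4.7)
\emph{Plan of proof.} The strategy is to realize $v$ as the positively homogeneous nonnegative $\sigma$-harmonic function of lowest growth on the cone $\mathcal{C}_\alpha$ (essentially the fractional Martin kernel at the vertex), and then exploit the freedom to vary the aperture $\alpha$ so that the resulting homogeneity exponent $\beta(\alpha)$ avoids the prescribed $\vartheta$.

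First, for each fixed $\alpha>0$, I will invoke the theory of nonnegative $\sigma$-harmonic functions on Lipschitz cones developed in~\cite{MR2075671, MR3810469}. That theory yields a unique (up to scalar) nonnegative function $v_\alpha$ on~$\R^2$, positively homogeneous of some degree $\beta(\alpha)\in(0,2\sigma)$, solving $(-\Delta)^\sigma v_\alpha=0$ in $\mathcal{C}_\alpha$ and vanishing in $\R^2\setminus\mathcal{C}_\alpha$. Writing $v_\alpha(x)=|x|^{\beta(\alpha)}\bar v_\alpha(x/|x|)$ and normalizing so that $\|\bar v_\alpha\|_{L^\infty(S^1)}=1$ produces a candidate for the triple $(\alpha,\beta,\bar v)$ required in the statement, provided only that $\beta(\alpha)\neq\vartheta$.

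Second, I will verify that the map $\alpha\mapsto\beta(\alpha)$ is continuous and \emph{non-constant} on $(0,+\infty)$. Continuity is a standard compactness and uniqueness argument: if $\alpha_k\to\alpha_\infty$, then interior and boundary H\"older estimates for the fractional Laplacian (applied to the normalized $v_{\alpha_k}$) allow one to extract a locally uniform subsequential limit, which is a positively homogeneous nonnegative $\sigma$-harmonic function in $\mathcal{C}_{\alpha_\infty}$ that vanishes outside; by the uniqueness statement recalled above, this limit must coincide with $v_{\alpha_\infty}$, forcing $\beta(\alpha_k)\to\beta(\alpha_\infty)$. Non-constancy is obtained by comparing the two extreme regimes: as $\alpha\searrow0$, $\mathcal{C}_\alpha$ exhausts the upper half-plane $\{x_2>0\}$, on which the explicit solution $x\mapsto(x_2)_+^\sigma$ has degree $\sigma$, so $\beta(\alpha)\to\sigma$; as $\alpha\nearrow+\infty$, $\mathcal{C}_\alpha$ shrinks to the strict subcone $\{x_1>0,\,x_2>0\}$, on which the monotonicity of the Martin exponent with respect to the cone aperture (narrower cone forces faster growth, cf.~\cite{MR2075671}) gives $\beta(\alpha)\ge\sigma+\delta_0$ for some $\delta_0>0$ and all sufficiently large $\alpha$.

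Combining the two steps, the range of $\beta(\cdot)$ contains a nondegenerate interval inside $(0,2\sigma)$, so for any prescribed $\vartheta\in\R$ I can select $\alpha>0$ with $\beta(\alpha)\in(0,2\sigma)\setminus\{\vartheta\}$; the associated pair $(\beta(\alpha),\bar v_\alpha)$ then satisfies all the requirements of the lemma. The main obstacle lies in the second step: existence and uniqueness of the homogeneous positive solution come essentially for free from the cited literature, but the genuine dependence of the homogeneity exponent on the cone aperture (in particular the fact that $\beta$ is \emph{not} pinned to a universal value such as $\sigma$ independently of the cone) is where the specific spectral geometry of the fractional problem on cones must be exploited.
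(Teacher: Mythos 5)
Your proposal is correct and follows essentially the same route as the paper: for each aperture $\alpha$, the existence of the nonnegative homogeneous solution with exponent $\beta(\alpha)\in(0,2\sigma)$ is taken from Theorem~3.2 of \cite{MR2075671}, and the prescribed value $\vartheta$ is then avoided by varying $\alpha$. The paper's version is leaner, though: since at most one of two distinct exponents can equal $\vartheta$, it suffices to compare $\alpha=1$ with $\alpha=1/2$ and invoke the strict monotonicity of Lemma~3.3 in \cite{MR2075671} (here ${\mathcal{C}}_{1/2}\supsetneq{\mathcal{C}}_1$ forces $\beta(1/2)<\beta(1)$), so neither the continuity of $\alpha\mapsto\beta(\alpha)$ nor the limiting behavior of the exponent at the extreme apertures, which you would still have to justify in detail, is actually needed.
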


\begin{proof} Let~$\vartheta\in\R$.
By Theorem~3.2 in~\cite{MR2075671},
for every~$\alpha>0$ there exist~$\beta(\alpha)\in(0,2\sigma)$
and~$\bar{v}_\alpha:S^1\to [0,+\infty)$ with~$\| \bar{v}_\alpha\|_{L^\infty(S^{1})}=1$
such that the function
$$ \R^n\ni x\mapsto v_\alpha(x):=|x|^\beta\,\bar{v}_\alpha\left( \frac{x}{|x|}\right)$$
satisfies
$$ \begin{cases}
(-\Delta)^\sigma v_\alpha = 0 & {\mbox{ in }} {\mathcal{C}}_\alpha,\\
v_\alpha=0 & {\mbox{ in }}\R^2\setminus{\mathcal{C}}_\alpha.\end{cases}$$
Let us focus on the case~$\alpha=1$. If~$\beta(1)\ne\vartheta$,
then the claims of Lemma~\ref{BAN} are satisfied
by choosing~$\alpha:=1$, $\beta:=\beta(1)$
and~$\bar{v}:=\bar{v}_1$.

If instead~$\beta(1)=\vartheta$, we exploit Lemma~3.3 in~\cite{MR2075671}.
Namely, since~${\mathcal{C}}_{1/2}\supset {\mathcal{C}}_1$,
we deduce from Lemma~3.3 in~\cite{MR2075671} that~$\beta(1/2)<
\beta(1)=\vartheta$, and thus
the claims of Lemma~\ref{BAN} are satisfied in this case
by choosing~$\alpha:=1/2$, $\beta:=\beta(1/2)$
and~$\bar{v}:=\bar{v}_{1/2}$.
\end{proof}

Exploiting Lemma~\ref{BAN}, we will obtain that the boundary derivatives
of~$\sigma$-harmonic functions have maximal span.
For this, given~$k\in\N$,
we define by~${\mathcal{H}}_k$ the set of all functions~$u\in C(\R^n)$
for which there exists~$r>0$ such that
\begin{equation}\label{KK} \begin{cases}
(-\Delta)^\sigma u=0 & {\mbox{ in }} B_r\cap \{x_n>0\},\\
u=0 & {\mbox{ in }} \{x_n<0\},\end{cases}\end{equation}
and for which there exists~$\phi\in C^k(\overline{B'_r})$ such that
$$ {\mathcal{T}}u(x'):=
\lim_{x_n\searrow0} \frac{u(x',x_n)}{x_n^\sigma}=\phi(x')\qquad{\mbox{ for all }}
x'\in B_r'.$$
Also, given~$r>0$ and~$\phi\in C^k(\overline{B'_r})$, we consider the array
\begin{equation}\label{mul}{\mathcal{D}}^k\phi(0):=\big(
D^\gamma \phi(0)
\big)_{{\gamma\in\N^{n-1}}\atop{|\gamma|\le k}}\;.\end{equation}
Namely, the array~${\mathcal{D}}^k\phi(0)$ contains all the derivatives
of~$\phi$ at the origin, up to order~$k$.
Fixing some order in the components of the multiindex, we can consider~${\mathcal{D}}^k\phi(0)$
as a vector in~$\R^{N_k}$, with
\begin{equation}\label{ennekappa} N_k:=\sum_{j=0}^k (n-1)^j.\end{equation}
The following
result states that the linear space produced in this way is ``as large as possible'':

\begin{lemma}\label{fan}
We have that
\begin{equation}\label{SP} \Big\{ {\mathcal{D}}^k\phi(0), {\mbox{ with }}\phi={\mathcal{T}}u
{\mbox{ and }}u\in {\mathcal{H}}_k\Big\}=\R^{N_k}.\end{equation}
\end{lemma}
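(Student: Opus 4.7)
The plan is to translate and rotate the homogeneous $\sigma$-harmonic functions furnished by Lemma~\ref{BAN} (and its higher-dimensional analog, alluded to in its proof) so as to exhibit an explicit spanning set inside $\mathcal{H}_k$.

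First, by linearity of \eqref{KK} and of the maps $u\mapsto \mathcal{T}u$ and $\phi\mapsto\mathcal{D}^k\phi(0)$, the family $\mathcal{H}_k$ is closed under linear combinations (with radius the minimum of the two), so the set appearing in \eqref{SP} is a linear subspace of $\R^{N_k}$. Hence it suffices to exhibit $N_k$ elements whose derivative arrays at the origin are linearly independent. Apply Lemma~\ref{BAN} with $\vartheta:=\sigma$, obtaining a homogeneous $\sigma$-harmonic function $v$ of degree $\beta\in(0,2\sigma)\setminus\{\sigma\}$ on the cone $\mathcal{C}_\alpha\subset\R^2$; in general dimension, the analogous construction produces such a $v$ on a wedge-type cone in $\R^n$ with a single edge along a boundary ray. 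For each unit vector $e\in\R^{n-1}$ and every $L>L_0(\alpha)$ large, translate and rotate $v$ so that its vertex is moved to $-Le^{*}\in\R^n$ (where $e^{*}:=(e,0)$) and the edge of the cone points along $e$; call the result $u_{L,e}$. For $L$ large, $B_r\cap\{x_n>0\}$ lies inside the translated cone for some fixed $r\in(0,1)$, hence $u_{L,e}$ is $\sigma$-harmonic in $B_r\cap\{x_n>0\}$, vanishes in $\{x_n<0\}$ (being zero outside the cone), and belongs to $C(\R^n)$.

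Homogeneity of $v$ about its vertex together with the standard Hopf-type behavior $v(y',y_n)\sim h(y'/|y'|)\,y_n^\sigma$ along the edge yield, for $x'\in B_r'$, the smooth trace
\[
\mathcal{T}u_{L,e}(x')=|x'+Le|^{\beta-\sigma}\,h\!\left(\frac{x'+Le}{|x'+Le|}\right),
\]
so that $u_{L,e}\in \mathcal{H}_k$. Expanding in $x'/L$ near the origin (using $|x'+Le|=L+x'\cdot e+O(|x'|^2/L)$ and the smoothness of $h$ at $e$) gives, at leading order in $L$,
\[
D^{\gamma}\mathcal{T}u_{L,e}(0)=c_0\,(\beta-\sigma)_{|\gamma|}\,L^{\beta-\sigma-|\gamma|}\,e^{\gamma}+o\bigl(L^{\beta-\sigma-|\gamma|}\bigr),
\]
with $c_0:=h(e)>0$ and $(\beta-\sigma)_j$ the Pochhammer symbol. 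Since $\beta-\sigma\in(-1,1)\setminus\{0\}$, this factor never vanishes. To conclude, fix $k+1$ distinct large values $L_0<\cdots<L_k$ and, at each level $j\in\{0,\dots,k\}$, enough unit vectors $e_1,\dots,e_{d_j}$ so that $\{e_\ell^{\gamma}:|\gamma|=j\}$ spans the $d_j$-dimensional space of symmetric $j$-tensors on $\R^{n-1}$ (a classical polynomial identity). Linear independence of the resulting derivative arrays in $\R^{N_k}$ reduces, after absorbing the negligible remainders for $L_i$ large, to two facts: at each level, the $(e_\ell^{\gamma})$'s span the symmetric $j$-tensors; across levels, the exponents $\beta-\sigma-j$ are pairwise distinct, so the generalized Vandermonde matrix $(L_i^{\beta-\sigma-j})_{i,j}$ is invertible. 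This produces $N_k$ linearly independent derivative arrays, proving \eqref{SP}.

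The main obstacle will be establishing the leading-order trace formula and controlling the remainder: in two dimensions the expression $\mathcal{T}u_{L,e}(x_1)=c_0(L+x_1)^{\beta-\sigma}$ is transparent from one-variable homogeneity about the vertex together with the $\sigma$-power boundary behavior, but in higher dimensions one must expand the angular factor $h$ carefully along the ray $x'\to 0$ to extract the scalar leading coefficient $c_0=h(e)$ and then verify that the remainder is genuinely of smaller order in $L$, uniformly in $e\in S^{n-2}$. Once this technical point is in place, the Vandermonde and symmetric-tensor spanning steps are classical.
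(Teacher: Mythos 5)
Your overall strategy is the paper's: apply Lemma~\ref{BAN} with $\vartheta:=\sigma$, push the cone far away so that $B_r\cap\{x_n>0\}$ sits inside the translated cone, read the trace off the homogeneity of $v$, and finish with linear algebra (your direct Vandermonde--plus--tensor-spanning argument is the dual of the paper's contradiction argument via the identity principle for polynomials in $\zeta/\e$, and either version works). The difference, and the gap, lies in how you produce the $n$-dimensional solution when $n\ge3$.

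For a genuinely conical solution, homogeneous about the single vertex $-Le^{*}$ with a nonconstant angular profile $h$, the asserted expansion
\begin{equation*}
D^{\gamma}\mathcal{T}u_{L,e}(0)=c_0\,(\beta-\sigma)_{|\gamma|}\,L^{\beta-\sigma-|\gamma|}\,e^{\gamma}+o\bigl(L^{\beta-\sigma-|\gamma|}\bigr)
\end{equation*}
is false. Writing the trace as $g(x')=r^{\mu}h(\omega)$ with $r:=|x'+Le|$, $\omega:=(x'+Le)/r$ and $\mu:=\beta-\sigma$, one computes $\partial_i g(0)=L^{\mu-1}\bigl[\mu\,h(e)\,e_i+(\nabla_{S^{n-2}}h(e))_i\bigr]$: the spherical gradient of $h$ enters at the \emph{same} order $L^{\mu-1}$ with a tensor structure not proportional to $e_i$, and higher derivatives pick up further same-order contributions from $D^{j}h$ and from the curvature of the level sets of $r$, since each derivative falling on the angular variable costs exactly one factor of $1/L$. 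So the remainder is $O(L^{\beta-\sigma-|\gamma|})$, not $o(L^{\beta-\sigma-|\gamma|})$, the symmetric-tensor spanning step no longer follows, and this does not improve as $L\to\infty$; it is exactly the ``main obstacle'' you flag, left unresolved. The paper's fix is to never leave dimension two: it sets $w(x):=v(x'\cdot\zeta+\e,x_n)$, the cylindrical (wedge) extension of the planar cone solution, which is still $\sigma$-harmonic in $B_{\e/2}\cap\{x_n>0\}$ by dimension reduction. Its trace is $\psi(x')=\psi_0(x'\cdot\zeta)$ with $\psi_0$ an exact power of an affine function, so the identity $\e^{m}D^{\gamma}\psi(0)=\kappa_m\,\zeta^{\gamma}\,\psi(0)$, with $\kappa_m=\prod_{i=0}^{m-1}(\beta-\sigma-i)\neq0$, holds with no error term, and the Vandermonde argument in $(\e,\zeta)$ then closes. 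Two smaller points: the nonvanishing $c_0=h(e)>0$ is not free --- the paper obtains $\psi(0)\neq0$ from the Boundary Harnack Inequality --- and the smoothness of the trace needed to place $w$ in $\mathcal{H}_k$ comes from the $C^\infty$ boundary regularity of ${\rm dist}^{-\sigma}w$ (Grubb, Ros-Oton--Serra), not merely from informal Hopf-type asymptotics.
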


\begin{proof} Suppose, by contradiction, that the linear space in the left
hand side of~\eqref{SP} does not exhaust the whole of~$\R^{N_k}$.
Then, there would exist
\begin{equation}\label{0876tg456}
\omega\in \R^{N_k}\setminus\{0\}\end{equation}
such that
the linear space in the left
hand side of~\eqref{SP} lies in the orthogonal space of~$\omega$. Namely,
for every~$u\in {\mathcal{H}}_k$ with~$\phi={\mathcal{T}}u$, 
\begin{equation}\label{mul2}
{\mathcal{D}}^k\phi(0)\cdot\omega=0.
\end{equation}
Recalling the notation in~\eqref{mul}, we can write~$\omega=(\omega_\gamma)_{{\gamma\in\N^{n-1}}\atop{|\gamma|\le k}}$,
and then~\eqref{mul2} takes the form
\begin{equation}\label{mul3}
\sum_{{\gamma\in\N^{n-1}}\atop{|\gamma|\le k}}\omega_\gamma\,
D^\gamma\phi(0)=0,\qquad
{\mbox{ for all~$u\in {\mathcal{H}}_k$ with~$\phi={\mathcal{T}}u$. }}
\end{equation}
Now, we exploit Lemma~\ref{BAN} with~$\vartheta:=\sigma$.
In the notation of Lemma~\ref{BAN}, 
we take~$P=(P_1,P_2)\in{\mathcal{C}}_\alpha\subset\R^2$ such that~$v(P)>0$
and define
$$\tilde v(x_1,x_2):=\frac{v(P)\;x_2^\sigma}{P_2^\sigma}.$$
We observe that~$\tilde v(P)=v(P)$.
Accordingly, by the Boundary Harnack Inequality
(see Theorem~1 on page~44 of~\cite{MR1438304}),
we have that, for all~$\e>0$,
\begin{equation}\label{5302345}
\left[ \frac1C,C\right]\ni \Lambda:=\lim_{x_n\searrow 0} \frac{v(\e,x_n)}{
\tilde v(\e,x_n)}=
\lim_{\tau\searrow 0} \frac{P_2^\sigma \;v(\e,\tau)}{
v(P)\;\tau^\sigma},
\end{equation}
for some~$C\ge1$.

Now, for all~$\e>0$
and~$\zeta\in S^{n-2}$
we define
\begin{equation}\label{eca2} w(x):=v(x'\cdot\zeta+\e,x_n).\end{equation}
Also, if~$r:=\frac{\e}{2}$ and~$x\in B_r\cap\{x_n>0\}$, it follows
that~$x'\cdot\zeta+\e+\frac{x_n}{\alpha}\ge -r+\e>0
$, and thus~$(x'\cdot\zeta+\e,x_n)\in
{\mathcal{C}}_\alpha$. Consequently, by~\eqref{eca},
we have that
\begin{equation}\label{eca4}
{\mbox{$(-\Delta)^\sigma w=0$ in~$B_r\cap\{x_n>0\}$.}}\end{equation}
Moreover, using~\eqref{eca} and~\eqref{eca2}, we see that 
\begin{equation}\label{eca5}
{\mbox{if~$x_n<0$
then~$w(x)=0$. }}\end{equation}
{F}rom~\eqref{eca4} and~\eqref{eca5} (see e.g.
Section~1.1 in~\cite{MR3694738}, or~\cites{MR3293447, MR3276603}),
it also follows that the function~${\rm dist}_{\{x_n<0\}}^{-\sigma} w$
belongs to~$C^\infty(B_r\cap\overline{ \{x_n>0\} })$, hence we can define
\begin{equation}\label{2389IKA}
\psi(x'):=\lim_{x_n\searrow0} \frac{w(x',x_n)}{{\rm dist}_{\{x_n<0\}}^\sigma(x',x_n)}
=
\lim_{x_n\searrow0} \frac{w(x',x_n)}{x_n^\sigma}={\mathcal{T}}w(x').\end{equation}
This, \eqref{eca4} and~\eqref{eca5} give that~$
w\in{\mathcal{H}}_k$.
As a consequence, recalling~\eqref{mul3}, we have that
\begin{equation}\label{PL} \sum_{{\gamma\in\N^{n-1}}\atop{|\gamma|\le k}}
\omega_\gamma\,
D^\gamma\psi(0)=0.\end{equation}
In view of~\eqref{eca2} and~\eqref{2389IKA}, we also observe that
\begin{equation}\label{8343546}
\psi(x')=
\lim_{x_n\searrow0} \frac{v(x'\cdot\zeta+\e,x_n)}{x_n^\sigma}.
\end{equation}
Accordingly, recalling~\eqref{5302345},
\begin{equation}\label{8343546BIS}
\psi(0)=
\lim_{x_n\searrow0} \frac{v(\e,x_n)}{x_n^\sigma}=\frac{\Lambda\,v(P)}{P_2^\sigma}=:\tilde\Lambda\ne0.
\end{equation}
Also, by~\eqref{8343546}, we can write that
\begin{equation}\label{7u12jd2}
\psi(x')=\psi_0(x'\cdot\zeta),\end{equation}
for some~$\psi_0:\R\to[0,+\infty)$.
Hence,
recalling the homogeneity in~\eqref{eca-a},
for all~$t>0$,
\begin{eqnarray*}
&&\psi_0\big(t (x'\cdot\zeta+\e)-\e\big)=
\psi\big(t (x'+\e\zeta)-\e\zeta\big)=
\lim_{x_n\searrow0} \frac{v\big((t (x'+\e\zeta)-\e\zeta)\cdot\zeta+\e, x_n\big)}{x_n^\sigma}\\
&&\qquad=
\lim_{x_n\searrow0} \frac{v(
t x'\cdot\zeta+t\e, x_n)}{x_n^\sigma}=
\lim_{y_n\searrow0} 
\frac{v\big(
t (x'\cdot\zeta+\e), ty_n\big)}{t^\sigma y_n^\sigma}
=
\lim_{y_n\searrow0} \frac{t^\beta v(
x'\cdot\zeta+\e, y_n)}{t^\sigma y_n^\sigma}\\&&\qquad=
t^{\beta-\sigma}\psi(x')=t^{\beta-\sigma}\psi_0(x'\cdot\zeta)
\end{eqnarray*}
Taking $m$ derivatives in~$t$ of this identity, we conclude that
\begin{equation}\label{evrt} 
\psi_0^{(m)}\big(t (x'\cdot\zeta+\e)-\e\big)\,
(x'\cdot\zeta+\e)^m
=\prod_{i=0}^{m-1}(\beta-\sigma-i)\;t^{\beta-\sigma-m}\,\psi_0(x'\cdot\zeta).\end{equation}
In addition, by~\eqref{7u12jd2}, we have that~$D^\gamma\psi(x')=
\zeta^\gamma\psi_0^{(|\gamma|)}(x'\cdot\zeta)$. Hence, evaluating~\eqref{evrt}
at~$t:=1$ and~$x':=0$, if~$m=|\gamma|$ we have that
\begin{equation}\label{5379x34} \begin{split}&\e^m D^\gamma\psi(0)=
\zeta^\gamma\psi_0^{(m)}(0)\,\e^m
=\zeta^\gamma \kappa_m\,\psi_0(0)
=\zeta^\gamma \kappa_m\,\psi(0),\\
{\mbox{where }}\qquad&\kappa_m:=\prod_{i=0}^{m-1}(\beta-\sigma-i)
.\end{split}\end{equation}
Plugging this information into~\eqref{PL}, and recalling~\eqref{8343546BIS},
we find that
\begin{equation}\label{345732}\begin{split}&
0=\frac1{\tilde\Lambda}\sum_{m=1}^k
\sum_{{\gamma\in\N^{n-1}}\atop{|\gamma|= m}}
\omega_\gamma\,
D^\gamma\psi(0)=\frac1{\tilde\Lambda}
\sum_{m=1}^k
\sum_{{\gamma\in\N^{n-1}}\atop{|\gamma|= m}}
\kappa_m\,\e^{-m}
\omega_\gamma \zeta^\gamma\psi(0)\\&\qquad\qquad=\frac1{\tilde\Lambda}
\sum_{{\gamma\in\N^{n-1}}\atop{|\gamma|\le k}}
\kappa_{|\gamma|}\,
\omega_\gamma \left(\frac{\zeta}{\e}\right)^\gamma\psi(0)=
\sum_{{\gamma\in\N^{n-1}}\atop{|\gamma|\le k}}
\kappa_{|\gamma|}\,
\omega_\gamma \left(\frac{\zeta}{\e}\right)^\gamma
.\end{split}\end{equation}
Since~$\frac{\zeta}\e$ ranges in an open set of~$\R^{n-1}$,
we deduce from~\eqref{345732} and the 
Identity Principle for polynomials that~$\kappa_{|\gamma|}\,
\omega_\gamma=0$ for all~$\gamma\in\N^{n-1}$ with~$|\gamma|\le k$.
As a consequence, by~\eqref{5379x34}, we find that~$
\omega_\gamma=0$ for all~$\gamma\in\N^{n-1}$ with~$|\gamma|\le k$,
hence~$\omega=0$. This is in contradiction with~\eqref{0876tg456}
and thus we have proved the desired result.
\end{proof}

With this, we are in the position of completing the proof of Theorem~\ref{FLFL}
by arguing as follows: 

\begin{proof}[Proof of Theorem~\ref{FLFL}]\label{89-ppA}
Since the claims in Theorem~\ref{FLFL} have a linear structure
in~$f$, $f_\e$ and~$u_\e$, by the Stone-Weierstra{\ss} Theorem,
it is enough to prove Theorem~\ref{FLFL} if~$f$ is a monomial.
Hence, we fix~$\e\in(0,1)$, possibly to be taken conveniently small,
and we suppose that
\begin{equation}\label{mu1}
f(x')=\frac{(x')^\mu}{\mu!}\qquad{\mbox{for some }}\mu\in\N^{n-1}.\end{equation}
Then, we apply Lemma~\ref{fan}, finding a suitable function~$u_\star
\in{\mathcal{H}}_k$ with~$\phi_\star:={\mathcal{T}}u_\star$ that satisfies
\begin{equation}\label{mu2} D^\gamma \phi_\star(0)=\begin{cases}
1 & {\mbox{ if }} \gamma=\mu,\\
0 & {\mbox{ if $|\gamma|\le |\mu|+k$ and~$ \gamma\ne\mu$.}}
\end{cases}\end{equation}
We define
\begin{eqnarray*}&&
u_\e(x):= \e^{-\sigma-|\mu|} \,u_\star(\e x)\\
{\mbox{and }}&& f_\e(x'):=\e^{-|\mu|} \,\phi_\star(\e x').
\end{eqnarray*}
Then, if~$x\in B_1\cap\{x_n>0\}$, we have that~$(-\Delta)^\sigma u_\e(x)=
\e^{\sigma-|\mu|} \,(-\Delta)^\sigma u_\star(\e x)=0$
as long as~$\e$ is sufficiently small. In addition,
we see that~$u_\e<0$ in~$\{x_n<0\}$, and that
$$ \lim_{x_n\searrow0} \frac{u_\e(x',x_n)}{x_n^\sigma}=
\lim_{x_n\searrow0} \frac{\e^{-\sigma-|\mu|} \,u_\star(\e x',\e x_n)}{x_n^\sigma}=
\lim_{\tau\searrow0} \frac{\e^{-|\mu|} \,u_\star(\e x',\tau)}{\tau^\sigma}=\e^{-|\mu|} \,\phi_\star(\e x')=
f_\e(x').
$$
These observations prove~\eqref{56:01} and~\eqref{56:02}.

Furthermore, 
$$ D^\gamma f_\e(x')=\e^{|\gamma|-|\mu|} \,D^\gamma \phi_\star(\e x').$$
Consequently, if we set~$g_\e(x'):=f_\e(x')-f(x')$, we deduce
from~\eqref{mu1} and~\eqref{mu2} that
$$ D^\gamma g_\e(0)=0,$$
for all~$\gamma\in\N^{n-1}$ such that~$|\gamma|\le|\mu|+k$.

This observation and a Taylor expansion give that, for all~$x'\in B_1'$
and all~$\zeta
\in\N^{n-1}$ such that~$|\zeta|\le k$,
\begin{eqnarray*}&&
| D^\zeta f_\e(x') - D^\zeta f(x')|=
|D^\zeta g_\e(x')|\le C_k\,\sup_{y'\in B_1'}
\sum_{{\alpha\in\N^n}\atop{|\alpha| =|\mu|+k+1}}
|D^\alpha g_\e(y')|\\&&\qquad=
C_k\,\sup_{y'\in B_1'}
\sum_{{\alpha\in\N^n}\atop{|\alpha| =|\mu|+k+1}}
|D^\alpha f_\e(y')|=
C_k\,\sup_{y'\in B_1'}
\sum_{{\alpha\in\N^n}\atop{|\alpha| =|\mu|+k+1}}
\e^{|\alpha|-|\mu|} \,
|D^\alpha \phi_\star(\e x')|\\&&\qquad
= C_k\,\e^{k+1} \,\sup_{y'\in B_1'}
\sum_{{\alpha\in\N^n}\atop{|\alpha| =|\mu|+k+1}}
|D^\alpha \phi_\star(\e x')|\le C_k'\,\e,
\end{eqnarray*}
for some~$C_k$, $C_k'>0$. This establishes~\eqref{56:03}, up to renaming~$\e$.
\end{proof}

\section{Proof of Theorem~\ref{QC}}\label{15kdttd3}

In this section, for the sake of generality,
some results are proved in arbitrary dimension~$n\ge2$,
whenever the proof would not experience significant
simplifications in the case~$n=2$ (then, for the proof of
Theorem~\ref{QC}, we restrict ourselves to the case~$n=2$,
see also Open Problem~\ref{17}).
As customary, given~$E\subset\R^{n+1}$,
it is convenient to consider the nonlocal mean curvature
at a point~$x\in\partial E$, defined by
\begin{equation}\label{jfgnjbj96768769} {\mathcal{H}}_E^s(x):=\int_{\R^{n+1}}
\frac{\chi_{\R^{n+1}\setminus E}(y)-\chi_E(y)}{|x-y|^{n+1+s}}\,dy.\end{equation}
The first step to prove Theorem~\ref{QC} is to establish
the existence of a small vertical cone not intersecting
the boundary of a homogeneous nonlocal minimal surface
on a hyperplane with null exterior datum. Letting~$e_{n+1}:=(0,\dots,0,1)\in\R^{n+1}$,
the precise result that we have
is the following one:

\begin{lemma}\label{3243-237gndlsm}
Let~$E\subset\R^{n+1}$ be a locally $s$-minimal set in~$\R^n\times(0,+\infty)$.
Assume that
\begin{equation}\label{ADX-EE}
E\cap\{x_n<0\}=\{x_{n+1}<0\}\cap\{x_n<0\}\end{equation}
and that
\begin{equation}\label{ADX6-EE}
{\mbox{$tE=E$ for every~$t>0$.}}
\end{equation}
Then,
\begin{equation}\label{6BOu-1BIS}
\min\Big\{ {\rm dist}\big(e_{n+1}, (\partial E)\cap\{x_n>0\}\big),\;
{\rm dist}\big(-e_{n+1},(\partial E)\cap\{x_n>0\}\big)\Big\}>0.
\end{equation}
\end{lemma}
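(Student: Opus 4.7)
The plan is a sliding barrier comparison using convex cones aligned with the $e_{n+1}$-axis, together with the cone invariance~\eqref{ADX6-EE}. By the symmetry of the hypotheses under $E\leftrightarrow E^c$ and $x_{n+1}\leftrightarrow -x_{n+1}$ (both preserving~\eqref{ADX-EE} and~\eqref{ADX6-EE}), it is enough to bound ${\rm dist}(e_{n+1},(\partial E)\cap\{x_n>0\})$ from below. For $\alpha>0$ I introduce the open convex cone
\[
C_\alpha:=\Big\{x\in\R^{n+1}:\, x_{n+1}>\alpha\sqrt{x_1^2+\cdots+x_n^2}\Big\}
\]
with vertex at the origin. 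Two elementary observations: (i) by~\eqref{ADX-EE}, $C_\alpha\cap\{x_n<0\}\subset E^c\cap\{x_n<0\}$, since $x\in C_\alpha$ with $x_n<0$ forces $x_{n+1}>\alpha|x_n|>0$; and (ii) $B_{1/(1+\alpha)}(e_{n+1})\subset C_\alpha$ by direct calculation. Hence, locating any $\alpha>0$ with $C_\alpha\subset E^c$ immediately yields the desired distance bound.

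Define $\alpha^*:=\sup\{\alpha>0:\,C_\alpha\cap E\ne\emptyset\}$. If $\alpha^*<\infty$, then for any $\alpha>\alpha^*$ one has $C_\alpha\subset E^c$ and the proof is complete. Suppose, for contradiction, that $\alpha^*=\infty$. Using~\eqref{ADX6-EE}, for each $\alpha$ one normalizes a point of $C_\alpha\cap E$ to produce $\omega_\alpha\in C_\alpha\cap E\cap S^n$; by~(i), $(\omega_\alpha)_n\ge 0$, and as $\alpha\to\infty$ the spherical cap $C_\alpha\cap S^n$ shrinks to $\{e_{n+1}\}$, so $\omega_\alpha\to e_{n+1}$ from the $\{x_n\ge 0\}$ side. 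Combined with the elementary fact that $e_{n+1}\in\overline{E^c}$ (via limits from $\{x_n<0\}$, where $E^c\cap\{x_n<0\}=\{x_{n+1}>0\}\cap\{x_n<0\}$), one concludes $e_{n+1}\in\partial E$. By the connectedness of $B_\delta(e_{n+1})\cap\{x_n>0\}$ for small $\delta$, either an entire such neighborhood lies in ${\rm int}(E)$ or ${\rm int}(E^c)$ (in which case the bound is already obtained), or else, using the density of regular boundary points for $s$-minimal sets, one extracts a sequence of smooth $p_k\in(\partial E)\cap\{x_n>0\}$ with $p_k\to e_{n+1}$.

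At each such smooth $p_k$, minimality gives $\mathcal{H}_E^s(p_k)=0$. I split $\mathcal{H}_E^s(p_k)=I_1(p_k)+I_2(p_k)$ into contributions from $\{y_n<0\}$ and $\{y_n>0\}$ respectively. Using~\eqref{ADX-EE}, the integrand of $I_1$ equals $\mathrm{sign}(y_{n+1})\,|p_k-y|^{-(n+1+s)}$; since $p_k$ is close to $e_{n+1}=(0,\dots,0,1)$, the half $\{y_{n+1}>0,\,y_n<0\}$ is strictly closer to $p_k$ than the mirror half $\{y_{n+1}<0,\,y_n<0\}$, producing a strictly positive uniform lower bound $I_1(p_k)\ge c_0>0$ as $p_k\to e_{n+1}$. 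The term $I_2(p_k)$ is then controlled using~\eqref{ADX6-EE} and a blow-up analysis showing that it cannot cancel $I_1(p_k)$; one concludes $\mathcal{H}_E^s(p_k)>0$ for large $k$, contradicting $\mathcal{H}_E^s(p_k)=0$. \emph{The main obstacle} is the estimate of $I_2(p_k)$: without explicit knowledge of $E$ in $\{y_n>0\}$, one exploits the cone invariance~\eqref{ADX6-EE} and the interior regularity theory for $s$-minimal surfaces to extract the needed bound; the rigorous production of smooth touching points $p_k$ also requires care, but follows from the density of regular points in $\partial E$.
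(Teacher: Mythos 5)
Your strategy is genuinely different from the paper's, and it has a real gap at exactly the point you flag as ``the main obstacle'': the control of $I_2(p_k)$, the contribution to ${\mathcal{H}}^s_E(p_k)$ coming from $\{y_n>0\}$. The hypotheses give no information about $E$ in $\{y_n>0\}$ beyond homogeneity and minimality, and the Euler--Lagrange equation at $p_k$ forces precisely $I_2(p_k)=-I_1(p_k)$; so the entire content of the lemma is concentrated in the claim that this balance is impossible, and ``a blow-up analysis showing that it cannot cancel $I_1(p_k)$'' is an assertion rather than an argument. Without it the proof does not close. A secondary soft spot is the extraction of smooth touching points $p_k\in(\partial E)\cap\{x_n>0\}$ with $p_k\to e_{n+1}$: this is plausible via partial regularity and density estimates, but it is extra machinery, and note that regular points are only known to be dense in the region where $E$ is actually minimal, i.e.\ in $\{x_n>0\}$. (The preliminary cone construction $C_\alpha$ and the dichotomy on $\alpha^*$ are correct but ultimately just a roundabout way of reducing to the hypothesis ``boundary points of $E$ in $\{x_n>0\}$ accumulate at $e_{n+1}$'', which is where the paper starts directly.)

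The paper's proof resolves precisely the difficulty you could not. Assuming $e_{n+1}\in\overline{(\partial E)\cap\{x_n>0\}}$, homogeneity~\eqref{ADX6-EE} gives $te_{n+1}\in\overline{(\partial E)\cap\{x_n>0\}}$ for all $t>0$, and the clean exterior region $B_{1/2}(e_{n+1})\cap\{x_n<0\}\subset E^c$ provided by~\eqref{ADX-EE} makes the nonlocal mean curvature well defined and equal to zero at these points in the pointwise sense of Theorem~B.9 in~\cite{MR3926519} --- no smooth touching points are needed. One then compares $E$ with its small vertical translate $F:=E-\frac{e_{n+1}}{100}$: both ${\mathcal{H}}^s_E(e_{n+1})$ and ${\mathcal{H}}^s_F(e_{n+1})$ vanish, and in their difference the unknown bulk of $E$ in $\{y_n>0\}$ cancels, leaving only the contribution of $E\setminus F$, which contains the explicit slab $\{y_n<0,\ -\tfrac1{100}<y_{n+1}<0\}$ by~\eqref{ADX-EE} and hence carries a strict sign. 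This is the cancellation device your half-space splitting lacks; to salvage your approach you would need an analogue of it for $I_2$.
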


\begin{proof}
We claim that
\begin{equation}\label{6BOu-1} e_{n+1}\not\in
\overline{(\partial E)\cap\{x_n>0\}}.
\end{equation}
Indeed, suppose by contradiction that~$e_{n+1}\in\overline{(
\partial E)\cap\{x_n>0\}}$,
and thus, by~\eqref{ADX6-EE}, also~$te_{n+1}\in\overline{(\partial E)
\cap\{x_n>0\}}$,
for all~$t>0$.
By~\eqref{ADX-EE}, we have that~$B_{1/2}(e_{n+1})\cap\{x_n<0\}\subset \R^{n+1}\setminus E$.
Hence (see e.g. Theorem~B.9 in~\cite{MR3926519}), we have that~${\mathcal{H}}_{E}^s(te_{n+1})=0$
for all~$t\in\left[\frac{9}{10},\frac{11}{10}\right]$.
In particular, if~$F:=E-\frac{e_{n+1}}{100}$,
we see that~$F\subset E$, and thus
\begin{eqnarray*}&& 0={\mathcal{H}}_{E}^s(e_{n+1})
-{\mathcal{H}}_{F}^s(e_{n+1})
=-2\int_{\R^{n+1}}
\frac{\chi_{E\setminus F}(y)}{|e_{n+1}-y|^{n+1+s}}\,dy\\&&\qquad\le-2
\int_{\R^{n-1}\times(-\infty,0)\times\left(-\frac1{100},0\right)}
\frac{dy}{|e_{n+1}-y|^{n+1+s}}<0.\end{eqnarray*}
This contradiction proves~\eqref{6BOu-1}.

Similarly, one proves that~$-e_{n+1}\not\in
\overline{(\partial E)\cap\{x_n>0\}}$.
Consequently, since~$\partial E$ is a closed set, we obtain~\eqref{6BOu-1BIS},
as desired.
\end{proof}

As a byproduct of Lemma~\ref{3243-237gndlsm}, we obtain
that the second blow-up of an $s$-minimal graph which is flat
from one side is necessarily a graph as well
(see e.g. Lemmata~2.2 and~2.3
in~\cite{2019arXiv190405393D} for the basic properties
of the second blow-up).
The precise result goes as follows:

\begin{lemma}\label{BOAmdfiUAMP}
Let~$u$ be an $s$-minimal graph in~$(-2,2)^{n-1}\times(0,4)$.
Assume that there exists~$h>0$
such that
\begin{equation}\label{568-029-3984112}
{\mbox{$u=0$ in~$ (-2,2)^{n-1}\times(-h,0)$.}}\end{equation}
Let~$E_{00}$ be the second blow-up of~$E_u$, being~$E_u$ defined in~\eqref{LEYUDEF}.
Then:
\begin{eqnarray}\label{POS-E001}
&& {\mbox{either $E_{00}\cap\{x_n>0\}=\varnothing$,}}
\\ \label{POS-E002}&& {\mbox{or $E_{00}\cap\{x_n>0\}=\{x_n>0\}$,}}\\&&
{\mbox{or $E_{00}$
has a graphical structure, namely}}\nonumber\\&&\label{POS-E003}
{\mbox{there exists~$u_{00}:\R^n\to\R$ such that~$E_{00}=E_{u_{00}}$}.}\end{eqnarray}
Furthermore, if~\eqref{POS-E003} holds true, then
\begin{equation}\label{POS-E004}
\lim_{x\to0}u_{00}(x)=0.
\end{equation}
\end{lemma}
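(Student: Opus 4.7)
The strategy is to apply Lemma~\ref{3243-237gndlsm} to the second blow-up~$E_{00}$, after verifying it satisfies the hypotheses of that lemma. Specifically, I would check three properties inherited from~$E_u$: that $E_{00}$ is locally $s$-minimal in the half-space $\{x_n>0\}$ (via the stability of $s$-minimality under $L^1_{\mathrm{loc}}$ blow-up limits, a standard fact recorded in Lemmata~2.2 and~2.3 of~\cite{2019arXiv190405393D}); that $E_{00}\cap\{x_n<0\}=\{x_{n+1}<0\}\cap\{x_n<0\}$ (a direct consequence of~\eqref{568-029-3984112} and the rescaling in the definition of~$E_{00}$); and that $tE_{00}=E_{00}$ for every $t>0$, the defining feature of the blow-up procedure. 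Lemma~\ref{3243-237gndlsm} then yields $\rho>0$ such that
\[
B_\rho(\pm e_{n+1})\cap (\partial E_{00})\cap\{x_n>0\}=\varnothing.
\]

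The second ingredient is the subgraph structure: since $E_u$ is downward-closed in the last coordinate and this property passes to $L^1_{\mathrm{loc}}$ limits, up to a null set we can write $E_{00}=\{(x,s):s<u_{00}(x)\}$ for some Borel function $u_{00}:\R^n\to[-\infty,+\infty]$, which by dilation invariance is positively $1$-homogeneous. Propagating the clearance above by positive dilations produces two open vertical cones around the half-axes $\R_+ e_{n+1}$ and $\R_- e_{n+1}$ that are disjoint from $\partial E_{00}\cap\{x_n>0\}$; at any boundary point $(\xi,u_{00}(\xi))\in\partial E_{00}$ with $\xi_n>0$, avoiding these cones forces the growth bound $|u_{00}(\xi)|\le|\xi|/\rho$ whenever $u_{00}(\xi)$ is finite.

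The trichotomy~\eqref{POS-E001}--\eqref{POS-E003} then reduces to showing that $u_{00}$ cannot take both finite and infinite values on the connected half-sphere $S^{n-1}\cap\{x_n>0\}$. I would argue this by combining the cone clearance above with the interior regularity of nonlocal minimal graphs (cf.~\cite{MR3934589}): on the finiteness set, $\partial E_{00}$ is a smooth hypersurface obeying the uniform growth bound just established, so this set is simultaneously open (by interior regularity) and closed (since a finite-to-infinite jump would violate the uniform bound at a limit direction) within the half-sphere. Connectedness then produces the required trichotomy. This is the main technical obstacle of the argument, as it requires reconciling the essential (measure-theoretic) and topological descriptions of~$E_{00}$ and invoking the regularity theory up to the boundary of the minimality region.

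Once the trichotomy is established, assertion~\eqref{POS-E004} follows quickly: under~\eqref{POS-E003}, the bound $|u_{00}(\xi)|\le|\xi|/\rho$ for $\xi$ with $\xi_n>0$, coupled with $u_{00}\equiv 0$ on $\{x_n<0\}$ inherited from the exterior condition, extends by $1$-homogeneity and by continuity across $\{x_n=0\}$ to the global estimate $|u_{00}(x)|\le|x|/\rho$ on a punctured neighborhood of the origin, which directly yields $\lim_{x\to 0} u_{00}(x)=0$.
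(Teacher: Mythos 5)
Your overall strategy coincides with the paper's: both proofs hinge on applying Lemma~\ref{3243-237gndlsm} to~$E_{00}$ (whose hypotheses you verify correctly) and then exploiting the resulting clearance around~$\pm e_{n+1}$ together with the $1$-homogeneity and the hypograph structure of~$E_{00}$. In particular, your derivation of~\eqref{POS-E004} from the bound~$|u_{00}(\xi)|\le|\xi|/\rho$ is a clean quantitative version of the paper's contradiction argument and is fine.

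However, your trichotomy step has a genuine gap. You reduce~\eqref{POS-E001}--\eqref{POS-E003} to showing that the set of directions on~$S^{n-1}\cap\{x_n>0\}$ where~$u_{00}$ is finite is open and closed, hence empty or everything. Even granting this, the case where the finiteness set is empty does not yield~\eqref{POS-E001} or~\eqref{POS-E002}: you must still exclude that~$u_{00}$ equals~$+\infty$ on some directions and~$-\infty$ on others, and connectedness of the half-sphere alone does not do this, since the sets~$\{u_{00}=+\infty\}$ and~$\{u_{00}=-\infty\}$ could a priori partition it without either being open. Moreover, your openness claim ``by interior regularity'' is circular: the regularity theory of~\cite{MR3934589} takes local boundedness of the graph as input, which is essentially what finiteness on a neighborhood would provide. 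Both issues are repaired by the same device the paper uses, and which you already have in hand: if~$u_{00}(x_-)=-\infty$ at some direction~$x_-$ while~$u_{00}(x_+)>-\infty$ at another direction~$x_+$ (finite or~$+\infty$), then~$(x_+,-k)\in E_{00}$ and~$(x_-,-k)\notin E_{00}$ for all large~$k$, so by homogeneity~$(x_+/k,-1)\in E_{00}$ and~$(x_-/k,-1)\notin E_{00}$; letting~$k\to\infty$ forces~$-e_{n+1}\in\overline{(\partial E_{00})\cap\{x_n>0\}}$, contradicting~\eqref{6BOu-1BIS}. This argument (and its mirror at~$+e_{n+1}$) shows directly that~$\{u_{00}=-\infty\}$ and~$\{u_{00}=+\infty\}$ are each either empty or the whole half-sphere, which gives the trichotomy without any topological open--closed argument.
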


\begin{proof} 
We suppose that
\begin{equation}\label{454562382023}
{\mbox{\eqref{POS-E001} and~\eqref{POS-E002} do not hold,}}\end{equation}
and we aim at showing that~\eqref{POS-E003} and~\eqref{POS-E004}
are satisfied. We start by proving~\eqref{POS-E003}.
For this, we first observe that~$E_{00}$
has a generalized
``hypographical'' structure,
that is
\begin{equation}\label{657865-349}
{\mbox{if~$(y,y_{n+1})\in(\R^n\times\R)\cap E_{00}$,
then~$(y,\tau)\in E_{00}$ for every~$\tau\le y_{n+1}$.}}\end{equation}
Indeed, each rescaling of~$E_u$ has such property,
and since these rescalings approach~$E_{00}$
in the Hausdorff distance (see~\cite{2019arXiv190405393D}),
the claim in~\eqref{657865-349} follows.

Moreover, by~\eqref{568-029-3984112},
\begin{equation}\label{ADX-EE-568-029-3984112}
E_{00}\cap\{x_n<0\}=\{x_{n+1}<0\}\cap\{x_n<0\},\end{equation}
and, by Lemma 2.2 in~\cite{2019arXiv190405393D},
we have that
\begin{equation}\label{ADX-EE-568-029-3984112-NIA}
{\mbox{$tE_{00}=E_{00}$ for every~$t>0$.
}}\end{equation}
{F}rom this and~\eqref{ADX-EE-568-029-3984112}, we are
in the position of using
Lemma~\ref{3243-237gndlsm},
and thus deduce from~\eqref{6BOu-1BIS} that
\begin{equation}\label{6BOu-1BIS-TRIS}
\min\Big\{ {\rm dist}\big(e_{n+1},(\partial E_{00})\cap\{x_n>0\}\big),\;
{\rm dist}\big(-e_{n+1},(\partial E_{00})\cap\{x_n>0\}\big)\Big\}>0.\end{equation}
Then, for all~$x\in\R^{n-1}\times(0,+\infty)$, we set
$$ u_{00}(x):=\sup\big\{
y {\mbox{ s.t. }}(x,y)\in E_{00}
\big\}.$$
By extending~$u_{00}$ to vanish in~$\R^{n-1}\times(-\infty,0)$,
we find that~$E_{00}$ is the subgraph of~$u_{00}$,
as desired. To this aim, it remains to prove that the image of~$u_{00}$
is~$\R$, namely that for every~$x\in\R^{n-1}\times(0,+\infty)$,
\begin{equation}\label{71-02-23884oe}
u_{00}(x)\not\in\{-\infty,+\infty\}.
\end{equation}
As a matter of fact, in light of~\eqref{ADX-EE-568-029-3984112-NIA},
it is sufficient to prove~\eqref{71-02-23884oe}
for every~$x\in S^{n-1}_+:=\{x=(x_1,\dots,x_n)\in\R^n
{\mbox{ s.t. $|x|=1$ and~$x_n>0$}}\}$.
Hence, we set
\begin{eqnarray*}&&
\omega :=\big\{ x\in S^{n-1}_+{\mbox{ s.t. }}u_{00}(x)\not\in\{-\infty,+\infty\}\big\},\\
&&
\omega_+ :=\big\{ x\in S^{n-1}_+{\mbox{ s.t. }}u_{00}(x)=+\infty\big\}\\
{\mbox{and }}&&
\omega_-:=\big\{ x\in S^{n-1}_+{\mbox{ s.t. }}u_{00}(x)=-\infty\big\},
\end{eqnarray*}
and, to prove~\eqref{POS-E003},
we want to show that~$\omega_+=\varnothing=\omega_-$.

For a contradiction, assume that~$\omega_-\ne\varnothing$.
By~\eqref{454562382023}, we also know that~$\omega_-\ne S^{n-1}_+$.
Hence, we can take~$x_-\in\omega_-$
and~$x_+\in\omega\cup\omega_+$.
By construction, we have that~$u_{00}(x_-)=-\infty$
and~$u_{00}(x_+)\in\R\cup\{+\infty\}$, and therefore~$(x_-,y)\in
\R^{n+1}\setminus E_{00}$ for all~$y\in\R$, while~$(x_+,y_+)\in E_{00}$
for some~$y_+\in\R$.

This and~\eqref{657865-349} give that~$(x_+,y)\in E_{00}$
for all~$y\le y_+$.
In particular, for all~$k\in\N$
sufficiently large, we have that~$(x_+,-k)\in E_{00}$,
with~$(x_-,-k)\in \R^{n+1}\setminus E_{00}$.
Consequently, by~\eqref{ADX-EE-568-029-3984112-NIA}, we see that~$
\left(\frac{x_+}k,-1\right)\in E_{00}$
and~$\left(\frac{x_-}k,-1\right)\in \R^{n+1}\setminus E_{00}$.
As a result, by taking the limit as~$k\to+\infty$, we conclude that~$-e_{n+1}\in
\overline{(\partial E_{00})\cap\{x_n>0\}}$. But this is in contradiction
with~\eqref{6BOu-1BIS-TRIS}, and therefore necessarily~$\omega_-=\varnothing$.

Similarly, one proves that~$\omega_+=\varnothing$,
and this completes the proof of~\eqref{71-02-23884oe}.

The proof of~\eqref{POS-E003} is thus completed, and we now
focus on the proof of~\eqref{POS-E004}.
For this, assume the converse: then, there exists
a sequence~$x^{(k)}=(x^{(k)}_1,\dots,x^{(k)}_n)$ such that~$x^{(k)}_n>0$,
$x^{(k)}\to0$ as~$k\to+\infty$ and~$|u(x^{(k)})|\ge a_0$,
for some~$a_0>0$. Up to a sign change, we can suppose that~$
u(x^{(k)})\ge a_0$. Hence, we have that~$( x^{(k)}, u(x^{(k)}))\in
(\partial E_{00})\cap\{x_n>0\}$ and then,
recalling~\eqref{ADX-EE-568-029-3984112-NIA},
we deduce that~$
\left(\frac{ x^{(k)}}{ u(x^{(k)})},1\right)\in
(\partial E_{00})\cap\{x_n>0\}$.

Accordingly, taking the limit as~$k\to+\infty$,
we find that~$e_{n+1}\in\overline{
(\partial E_{00})\cap\{x_n>0\} }$. This is in contradiction
with~\eqref{6BOu-1BIS-TRIS} and, as a result,
the proof of~\eqref{POS-E004} is complete.
\end{proof}

{F}rom Lemma~\ref{3243-237gndlsm} we also deduce
a regularity result of the following type:

\begin{lemma}\label{lemma33}
Let~$u$ be an $s$-minimal graph in~$\R^{n-1}\times(0,+\infty)$.
Assume that
\begin{equation}\label{pppADX}
{\mbox{$u(x_1,\dots,x_n)=0$ if~$x_n<0$.}}\end{equation}
Assume also that~$u$ is positively homogeneous of degree~$1$, i.e.
\begin{equation}\label{pppADX6}
u(tx)=tu(x)\qquad{\mbox{ for all $x\in\R^n$ and~$t>0$.}}
\end{equation}
Then, we have that
\begin{equation}\label{ADX0}
u\in 
L^\infty_{\rm loc}(\R^n)\cap C([-1,1]^{n-1}\times[0,1])\cap
C^\infty( \R^{n-1}\times(0,+\infty)).\end{equation} 
\end{lemma}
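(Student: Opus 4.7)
I would split the conclusion into its three components and handle them in order of increasing difficulty. The interior smoothness $u\in C^\infty(\R^{n-1}\times(0,+\infty))$ is immediate from the interior $C^\infty$ regularity of $s$-minimal graphs established in~\cite{MR3934589}, applied on every compact subset of the open half-space.

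For the local boundedness $u\in L^\infty_{\rm loc}(\R^n)$, the plan is to apply Lemma~\ref{3243-237gndlsm} to the hypograph~$E_u$ defined in~\eqref{LEYUDEF}. The flatness of the exterior datum in~\eqref{ADX-EE} follows from~\eqref{pppADX}, while the cone condition~\eqref{ADX6-EE} follows from~\eqref{pppADX6}, so Lemma~\ref{3243-237gndlsm} yields $\rho_0>0$ such that both $e_{n+1}$ and $-e_{n+1}$ lie at distance at least $\rho_0$ from $(\partial E_u)\cap\{x_n>0\}$. Arguing by contradiction, if $u$ were unbounded on some compact $K\subset\R^n$, one could extract a sequence $x^{(k)}\in K$ with $x^{(k)}\to x^*\in K$ and $|u(x^{(k)})|\to+\infty$. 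The interior smoothness and the vanishing of $u$ on $\{x_n<0\}$ force $x^*_n=0$ and $x^{(k)}_n>0$ for large $k$. Assuming $u(x^{(k)})\to+\infty$ (the other sign being analogous), the cone property $t\,E_u=E_u$ applied with scale $t_k:=1/u(x^{(k)})\to 0^+$ gives
\[
\bigl(x^{(k)}/u(x^{(k)}),\,1\bigr)\,=\,t_k\,\bigl(x^{(k)},\,u(x^{(k)})\bigr)\,\in\,(\partial E_u)\cap\{x_n>0\},
\]
and this sequence converges to $(0,1)=e_{n+1}$, contradicting Lemma~\ref{3243-237gndlsm}.

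For the continuity on the closed strip $[-1,1]^{n-1}\times[0,1]$, the smoothness step already yields continuity at every point with $x_n>0$, so only the boundary layer $\{x_n=0\}$ remains. At the origin, the homogeneity~\eqref{pppADX6} combined with the local boundedness just obtained gives $|u(x)|\le M\,|x|$ in a neighborhood of the origin, hence $u(x)\to 0=u(0)$. At any other boundary point $\bar x=(\bar x',0)$ with $\bar x'\neq 0$, the trace $f(\bar x'):=\lim_{x_n\searrow 0}u(\bar x',x_n)$ exists by Theorem~1.1 of~\cite{MR3516886}. To promote this pointwise trace to joint continuity, the argument takes any subsequential limit $L$ of $u(x^{(k)})$ along a sequence $x^{(k)}\to\bar x$ with $x^{(k)}_n>0$; the closedness of $\partial E_u$ places $(\bar x,L)$ in $\partial E_u$, and the cone structure of $E_u$ together with the pointwise trace identification and the ``no vertical accumulation'' information from Lemma~\ref{3243-237gndlsm} then force $L=f(\bar x')$, thereby ruling out further subsequential limits.

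The step I expect to be the main obstacle is this last one: upgrading the pointwise trace existence at each fixed $x'$ to joint continuity as $(x',x_n)\to(\bar x',0)$. The mere existence of the limit provided by~\cite{MR3516886} does not by itself preclude distinct subsequential limits of $u$ along sequences approaching $\bar x$ obliquely. Handling this genuinely requires combining the cone structure of $E_u$, the quantitative distance information from Lemma~\ref{3243-237gndlsm}, and the boundary regularity machinery (in particular the boundary Lipschitz bounds) announced in the introduction to Section~\ref{15kdttd3}.
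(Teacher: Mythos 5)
Your treatment of the local boundedness --- the contradiction argument rescaling $\big(x^{(k)},u(x^{(k)})\big)$ by $1/u(x^{(k)})$ to produce points of $(\partial E_u)\cap\{x_n>0\}$ accumulating at $\pm e_{n+1}$, against Lemma~\ref{3243-237gndlsm} --- is exactly the paper's argument, and it is the real content of the lemma. One ordering issue should be fixed: the interior $C^\infty$ regularity of Theorem~1.1 in~\cite{MR3934589} takes local boundedness as a hypothesis (it is a gradient estimate in terms of $\|u\|_{L^\infty}$), so smoothness is not available ``first'' and cannot legitimately be invoked inside the boundedness proof, as you do when locating $x^*$. This circularity is harmless only because smoothness is never actually needed there: $u(x^{(k)})\ge k>0$ together with~\eqref{pppADX} already forces $x^{(k)}_n>0$, and you do not need $x^{(k)}$ to converge --- only $x^{(k)}/u(x^{(k)})\to 0$, which follows from $x^{(k)}\in K$ and $u(x^{(k)})\to+\infty$. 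The correct logical order, as in the paper, is: boundedness first, then smoothness as a consequence.

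The step you flag as the main obstacle --- upgrading the pointwise trace at each fixed $x'$ to joint continuity on $[-1,1]^{n-1}\times[0,1]$ --- is indeed left open in your proposal, and your sketch (subsequential limits $L$ with $(\bar x,L)\in\partial E_u$, then ``the cone structure forces $L=f(\bar x')$'') does not by itself close it: closedness of $\partial E_u$ and the cone property do not exclude a vertical segment of $\partial E_u$ over $\bar x$ away from the origin, which is exactly the scenario of distinct oblique limits. The paper, however, does not construct such an argument either: it dispatches this step by citing Theorem~1.1 of~\cite{MR3516886}, which it invokes precisely for continuity of an $s$-minimal graph up to $\{x_n=0\}$ (and elsewhere in the paper for the existence of the trace). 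So the difficulty you correctly identify is resolved by an external result rather than by combining the cone structure with Lemma~\ref{3243-237gndlsm}. With the reordering above and that citation in place of your final step, your proof coincides with the paper's.
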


\begin{proof} We claim that
\begin{equation}\label{6BOu} u\in
L^\infty_{\rm loc}(\R^n).
\end{equation}
Suppose not. Then there exist~$R>0$ and~$x^{(k)}=(x^{(k)}_1,\dots,x^{(k)}_n)\in B_R$ such that~$|u(x^{(k)})|\ge k$.
Without loss of generality, we can suppose that~$u(x^{(k)})\ge k$.
Then, by~\eqref{pppADX},
we see that~$x^{(k)}_n>0$, and,
recalling~\eqref{pppADX6}, we have that~$\big( t x^{(k)},t u(x^{(k)})\big)\in
\partial E_u$ for every~$t>0$ (being~$E_u$ defined in~\eqref{LEYUDEF}), and then, in particular,
$$ \left( \frac{x^{(k)}}{u(x^{(k)})},1\right)\in\partial E_u.$$
Accordingly, taking the limit as~$k\to+\infty$, 
we find that~$e_{n+1}\in\overline{\partial E_u\cap\{x_n>0\}}$.
This is in contradiction with~\eqref{6BOu-1BIS},
whence the proof of~\eqref{6BOu} is complete. 

As a result, by Theorem~1.1 in~\cite{MR3934589}
we obtain that~$u$ is smooth in~$\{x_n>0\}$,
and the continuity up to~$\{x_n=0\}$ follows from Theorem~1.1
of~\cite{MR3516886}. This proves the claim in~\eqref{ADX0}.
\end{proof}

Now, we show that if the second blow-up is either empty or full
in a halfspace, then the original $s$-minimal graph is necessarily
boundary discontinuous:

\begin{lemma}\label{554327h:AJSJD2374859ty}
Let~$u$ be an $s$-minimal graph in~$(-2,2)\times(0,4)$.
Assume that there exists~$h>0$
such that
\begin{equation}\label{Rg68231opp}
{\mbox{$u=0$ in~$ (-2,2)\times(-h,0)$.}}\end{equation}
Let~$E_{00}$ be the second blow-up of~$E_u$. Then,
\begin{eqnarray}
\label{FAG:1}
&& {\mbox{if~$E_{00}=\R\times(-\infty,0)\times(-\infty,0)$, then }}
\lim_{x_2\searrow0} u(0,x_2)<0;
\\
\label{FAG:2}\nonumber
&& {\mbox{if~$E_{00}=\big(\R\times(-\infty,0)\times(-\infty,0)\big)
\cup \big(\R\times(0,+\infty)\times\R\big)$,}}\\
&&{\mbox{then }}\lim_{x_2\searrow0} u(0,x_2)>0.
\end{eqnarray}
\end{lemma}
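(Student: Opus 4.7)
The plan is to prove both (FAG:1) and (FAG:2) by contradiction; we detail (FAG:1), as (FAG:2) follows by applying the same argument to $-u$ in place of $u$. Suppose $E_{00} = \R \times (-\infty, 0) \times (-\infty, 0)$, and assume for contradiction $L := \lim_{x_2 \searrow 0} u(0, x_2) \ge 0$ (the trace $L$ exists by Theorem~1.1 of~\cite{MR3516886}). By the construction of the second blow-up from~\cite{2019arXiv190405393D}, there is a sequence $r_k \to 0^+$ along which $E_u/r_k \to E_{00}$ in $L^1_{\mathrm{loc}}$; equivalently, since $E_{00} \cap \{y_2 > 0\} = \varnothing$, the rescaled graphs $u_k(y) := u(r_k y)/r_k$ diverge to $-\infty$ on every compact subset of $\{y_2 > 0\}$.

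If $L > 0$: by continuity of the trace $f$ at the origin and by the graph structure, $u \ge L/4$ on a neighborhood of the origin intersected with $\{x_2 > 0\}$, so rescaling yields $u_k(y) \ge L/(4 r_k) \to +\infty$ on any fixed compact of $\{y_2 > 0\}$, a contradiction. If $L = 0$: the origin is a continuity point of the graph of $u$ with value $0$ matching the exterior datum; invoking the boundary Lipschitz-type estimates for $s$-minimal graphs at matching continuity points (developed in the preceding lemmas of this section, together with Theorem~1.1 of~\cite{MR3516886}), one derives a lower bound of the form $u(x) \ge -C|x|$ for $x$ near the origin. Rescaling gives $u_k(y) \ge -C|y|$, which remains bounded on any compact of $\{y_2 > 0\}$ and again contradicts $u_k \to -\infty$. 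Combining the two subcases yields $L < 0$, proving (FAG:1).

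The proof of (FAG:2) is entirely symmetric: considering $-u$ in place of $u$ interchanges the roles of~\eqref{POS-E001} and~\eqref{POS-E002} and flips the sign of the trace, giving $\lim_{x_2 \searrow 0} u(0, x_2) > 0$ under the hypothesis of (FAG:2). The principal hurdle is the $L = 0$ subcase of either assertion: one must quantify the boundary behavior of $u$ at a zero-crossing continuity point to rule out degenerate rescalings in which $u_k \to \mp\infty$ while the trace still tends to zero; this is precisely where the boundary regularity machinery for $s$-minimal graphs whose trace matches the exterior datum is brought to bear.
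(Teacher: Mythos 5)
Your reduction to the blow-up divergence ($u_k\to-\infty$ locally uniformly in $\{y_2>0\}$ when $E_{00}\cap\{x_2>0\}=\varnothing$) and the treatment of the subcase $L>0$ are fine, but the subcase $L=0$ contains a genuine gap, and it is precisely the case that carries all the difficulty. You invoke ``boundary Lipschitz-type estimates for $s$-minimal graphs at matching continuity points (developed in the preceding lemmas of this section)'' to get $u(x)\ge -C|x|$ near the origin. No such estimate is available at this stage: Lemmata~\ref{3243-237gndlsm}, \ref{BOAmdfiUAMP}, \ref{lemma33} and~\ref{STEP1} concern either the second blow-up or \emph{homogeneous} graphs, not the original $u$, and Theorem~1.1 of~\cite{MR3516886} only gives existence of the vertical trace, with no rate. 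Worse, a bound of the form $|u(x)|\le C|x|$ at a zero-crossing continuity point is essentially the content of Theorem~\ref{NLMS} (which gives the exponent $\frac{3+s}{2}$), and its proof runs through Corollary~\ref{COR:ALTE}, hence through the very lemma you are proving. Your argument for $L=0$ is therefore circular, and without that input nothing prevents a degenerate scenario in which $u(0,x_2)\to 0$ while $u(0,x_2)/x_2\to-\infty$ slowly enough that $u_k\to-\infty$ is compatible with $L=0$.

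The paper avoids the dichotomy on $L$ altogether and proves a \emph{quantitative} statement by a barrier argument: after showing via the density (clean ball) estimates that $(kE_u)\cap B_M\subseteq\{x_2<\frac1M\}$ for large $k$, one slides unit balls $B_1(\lambda,t,\tau)$ in the $e_2$-direction from $t=+\infty$ down to $t=1$; a first touching point $Z$ of $\partial(kE_u)$ would force ${\mathcal{H}}^s_{kE_u}(Z)>0$, because the flat exterior datum $u=0$ in $(-2,2)\times(-h,0)$ and the inclusion above make the complement dominate the singular integral. This contradiction shows that a full box $\left(-\frac2{k_0},\frac2{k_0}\right)\times\left(0,\frac2{k_0}\right)\times\left(-\frac2{k_0},\frac2{k_0}\right)$ lies in $\R^3\setminus E_u$, whence $u(0,y_2)\le-\frac1{k_0}$ for small $y_2>0$, which is the desired strict negativity. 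To repair your proof you would need to replace the appeal to a nonexistent Lipschitz bound by an argument of this quantitative type.
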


\begin{proof} We focus on the proof of~\eqref{FAG:1},
since the proof of~\eqref{FAG:2} is similar. 
We recall
(see~\cite{2019arXiv190405393D})
that, as~$k\to+\infty$,
\begin{equation}\label{INCLA0-QUI}
\begin{split}&{\mbox{up to a subsequence, $\chi_{k E_u}$ converges to~$\chi_{{E_{00}}}$
in~$L^1_{\rm loc}(\R^3)$,}}\\
&{\mbox{and $kE_u$ converges to~$E_{00}$
locally in the Hausdorff distance.}}\end{split}
\end{equation}
We claim that for every~$M>0$ there exists~$k_M\in\N$ such that
if~$k\ge k_M$ then
\begin{equation}\label{INCLA1}(
kE_u)\cap B_M\subseteq\left\{x_2<\frac1M\right\}.
\end{equation}
To check this, we argue for a contradiction and suppose that,
for some~$M>1$,
there are infinitely many~$k$'s for which there exists~$p^{(k)}=(p^{(k)}_1,p^{(k)}_2,p^{(k)}_3)\in
(kE_u)\cap B_M$ with~$p^{(k)}_2\ge\frac1M$.
We observe that~$B_{1/(2M)}(p^{(k)})$ cannot be contained in~$kE_u$, otherwise,
recalling the structure of~$E_{00}$ in~\eqref{FAG:1},
\begin{eqnarray*}&& \int_{ B_{1/(2M)}(p^{(k)}) }|\chi_{k E_u}(x)-\chi_{{{E_{00}}}}(x)|\,dx=
\int_{ B_{1/(2M)}(p^{(k)}) }|\chi_{k E_u}(x)|\,dx\\&&\qquad \qquad = |B_{1/(2M)}(p^{(k)})|=
|B_{1/(2M)}|,\end{eqnarray*}
which is in contradiction with~\eqref{INCLA0-QUI}.

As a result, there exists~$q^{(k)}\in B_{1/(2M)}( p^{(k)} )$
with~$q^{(k)}\in\partial (k E_u)$. In particular, we have that~$q^{(k)}_2\ge
p^{(k)}_2-\frac1{2M}\ge\frac1{2M}$, whence, using the clean ball
condition in~\cite{MR2675483}, there exist~$c\in(0,1)$, $r_0\in\left(0,\frac1{4M}\right)$
and~$\tilde q^{(k)}\in B_{r_0}( q^{(k)})$ such that~$B_{cr_0}( \tilde q^{(k)})\subseteq
(kE_u)\cap B_{r_0}( q^{(k)})$.

We remark that if~$x\in B_{cr_0}( \tilde q^{(k)})$ then
$$ x_2\ge \tilde q^{(k)}_2-cr_0\ge q^{(k)}_2 -| q^{(k)}-\tilde q^{(k)}|-r_0
\ge \frac1{2M}-2r_0>0.$$
Consequently, recalling the structure of~$E_{00}$ in~\eqref{FAG:1},
$$ \int_{ B_{cr_0}( \tilde q^{(k)})  }
|\chi_{k E_v}(x)-\chi_{{{F}}}(x)|\,dx=
\big| B_{cr_0}( \tilde q^{(k)}) \big|=| B_{cr_0}|.
$$
This is in contradiction with~\eqref{INCLA0-QUI}
and so it proves~\eqref{INCLA1}.

Now, for all~$\lambda$, $\tau\in(-2,2)$ and~$t>1$
consider the ball~$B_1(\lambda,t,\tau)$. By~\eqref{INCLA1}, there exists~$k_0\in\N$ such that
if~$t\ge 2$ we have that
\begin{equation}\label{980-029-1}
B_1(\lambda,t,\tau)\subseteq \R^3\setminus (kE_u),
\end{equation}
for all~$\lambda$, $\tau\in(-2,2)$ and~$k\ge k_0$.

Now we claim that the claim in~\eqref{980-029-1}
holds true for all~$t>1$ (and not just~$t\ge2$)
with respect to the same~$k_0$: namely, we show that
for all~$\lambda$, $\tau\in(-2,2)$, $k\ge k_0$
and~$t>1$, we have that
\begin{equation}\label{980-029-2}
B_1(\lambda,t,\tau)\cap\big(\partial (kE_u)\big)=\varnothing.
\end{equation}
Indeed, if not, there would exist~$\lambda$, $\tau\in(-2,2)$
and~$k\ge k_0$, and a suitable~$t_\star>1$, for which~$B_1(\lambda,t_\star,\tau)\cap\big(\partial (kE_u)\big)\not=\varnothing$.
More precisely, if~\eqref{980-029-2} were false,
by~\eqref{980-029-1},
we can
slide~$B_1(\lambda,\cdot,\tau)$ with respect to the parameter~$t$
from the right till it touches~$\partial (kE_u)$, say at a point~$Z=(Z_1,Z_2,Z_3)$.
In this way, find that~$t_\star\in(1,2)$
and~$B_1(\lambda,t,\tau)\subseteq \R^3\setminus (kE_u)$
for all~$t>t_\star$, with~$Z\in(\partial (kE_u))\cap B_1(\lambda,t_\star,\tau)$.

Now, we show that, for any fixed~$M>1$,
\begin{equation}\label{Diar34t5y6be9er}
(kE_u) \cap\{ x_2-Z_2>2\}\subseteq \R^3\setminus B_{M/4}(Z),
\end{equation}
as long as~$k$ is sufficiently large.
Indeed, if not, take~$P=(P_1,P_2,P_3)\in (kE_u)\cap B_{M/4}(Z)$
with~$P_2>Z_2+2$. By construction,
$$ |P_1|\le |P_1-Z_1|+|Z_1-\lambda |+|\lambda |\le
\frac{M}{4}+1+2<\frac{M}{3},$$
as long as~$M$ is sufficiently large, and similarly~$|P_2|<\frac{M}{3}$
and~$|P_3|<\frac{M}{3}$. As a consequence, we have that~$P\in B_M$.
This and~\eqref{INCLA1} give that~$P_2<\frac1M$, and then
$$ \frac1M>P_2>Z_2+2\ge t_\star -|t_\star-Z_2|+2\ge t_\star-1+2\ge 2.$$
This is a contradiction and therefore 
the proof of~\eqref{Diar34t5y6be9er} is complete.

Furthermore, when~$k>2/h$, we observe that
\begin{equation}\label{Diar34t5y6be9er-BIS}\begin{split}&
(kE_u) \cap\{ x_2-Z_2\in(-hk,-2)\}\subseteq\left\{
x_3<ku\left( \frac{x_1}k,\frac{x_2}k\right)\right\}\cap\{ x_2\in(-hk,0)\}\\&\qquad\qquad
\subseteq \{ x_3<0\},\end{split}
\end{equation}
thanks to~\eqref{Rg68231opp}.

Then, as a consequence of~\eqref{Diar34t5y6be9er} and~\eqref{Diar34t5y6be9er-BIS},
\begin{equation*}
\int_{ B_{M/4}(Z)\cap \{2< |x_2-Z_2|<hk\}}\frac{\chi_{\R^3\setminus(kE_u)}(y)
-\chi_{ kE_u }(y)}{|Z-y|^{3+s}}\,dy\ge c,
\end{equation*}
for some universal constant~$c>0$ depending only on~$s$.

On this account, if~$M$ and~$k$ are sufficiently large, we find that
\begin{equation}\label{Nysjdf33-1}
\int_{  \{ |x_2-Z_2|>2\}}\frac{\chi_{\R^3\setminus(kE_u)}(y)
-\chi_{ kE_u }(y)}{|Z-y|^{3+s}}\,dy\ge \frac{c}{2}.
\end{equation}
Moreover, for a sufficiently small~$\rho>0$, using the fact that~$
B_1(\lambda,t_\star,\tau)\subseteq \R^3\setminus (kE_u)$,
we see that
\begin{equation}\label{Nysjdf33-2}
\int_{  B_\rho(Z)}\frac{\chi_{\R^3\setminus(kE_u)}(y)
-\chi_{ kE_u }(y)}{|Z-y|^{3+s}}\,dy\ge -C\rho^{1-s}\ge-\frac{c}{4},
\end{equation}
see e.g. Lemma~3.1 in~\cite{MR3516886} for computational details.

Fixing such a~$\rho$ from now on, we deduce from~\eqref{Nysjdf33-1}
and~\eqref{Nysjdf33-2} that
\begin{equation}\label{Nysjdf33-3}
\int_{ B_\rho(Z)\cup \{ |x_2-Z_2|>2\}}\frac{\chi_{\R^3\setminus(kE_u)}(y)
-\chi_{ kE_u }(y)}{|Z-y|^{3+s}}\,dy\ge \frac{c}{4}.
\end{equation}
Also, by~\eqref{Rg68231opp} and~\eqref{INCLA1},
$$ \int_{\{ |x_2-Z_2|\le 2\}\setminus B_\rho(Z)}\frac{\chi_{\R^3\setminus(kE_u)}(y)
-\chi_{ kE_u }(y)}{|Z-y|^{3+s}}\,dy\ge- \frac{c}{8},$$
as long as~$M$ is sufficiently large. This and~\eqref{Nysjdf33-3} yield that
$$ \int_{\R^3}\frac{\chi_{\R^3\setminus(kE_u)}(y)
-\chi_{ kE_u }(y)}{|Z-y|^{3+s}}\,dy\ge\frac{c}{8}>0,$$
which is a contradiction with the minimality of~$kE_u$.
This completes the proof of~\eqref{980-029-2}.

Consequently, by~\eqref{980-029-2},
$$(-2,2)\times(0,2)\times(-2,2)\subseteq \R^3\setminus (k_0 E_u).$$
Therefore
$$\left(-\frac2{k_0},\frac2{k_0}\right)\times
\left(0,\frac2{k_0}\right)
\times\left(-\frac2{k_0},\frac2{k_0}\right)\subseteq \R^3\setminus  E_u,$$
and thus, for all~$y=(y_1,y_2,y_3)\in\left(-\frac2{k_0},\frac2{k_0}\right)\times
\left(0,\frac2{k_0}\right)
\times\left(-\frac2{k_0},\frac2{k_0}\right)$, we have that~$y_3\ge
u(y_1,y_2)$.
In particular, choosing~$y_1:=0$ and~$y_3:=-\frac1{k_0}$,
we find that, for all~$y_2\in\left(0,\frac2{k_0}\right)$,
$$ u(0,y_2)\le -\frac1{k_0}.$$
Then, we can send~$y_2\searrow0$ and obtain
$$ \limsup_{y_2\searrow0} u(0,y_2)<0.$$
This limit in fact exists, thanks to Theorem~1.1
of~\cite{MR3516886}, whence~\eqref{FAG:1}
follows, as desired.
\end{proof}

Next result discusses how the boundary continuity of a nonlocal minimal
graph at some boundary
points implies the differentiability up to the boundary:

\begin{lemma}\label{STEP1}
Let
the assumptions of Theorem~\ref{QC} hold.
Assume also that
\begin{equation}\label{6tsoe}
\lim_{x\to (1,0)} u(x)=0.
\end{equation}
Then~$u\in C^1(\R^2\setminus\ell_-)$, where~$\ell_-:=(-\infty,0]\times \{0\}$.

Similarly, if
\begin{equation}\label{6tsoe-2}
\lim_{x\to (-1,0)} u(x)=0.
\end{equation}
Then~$u\in C^1(\R^2\setminus\ell_+)$, where~$\ell_+:=[0,+\infty)\times \{0\}$.

If both~\eqref{6tsoe} and~\eqref{6tsoe-2} are satisfied, then~$u\in C^1(\R^2)$.
\end{lemma}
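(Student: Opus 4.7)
The plan is to propagate the pointwise boundary continuity along the positive $x_1$-ray via homogeneity, upgrade it to $C^1$ via a boundary differentiability result, and assemble with the interior regularity. For every $t>0$, the $1$-homogeneity \eqref{ADX6} together with $\lim_{z\to(1,0)}u(z)=0$ yields $\lim_{y\to(t,0)}u(y)=\lim_{z\to(1,0)}t\,u(z)=0$, by writing $y=tz$. Consequently the trace $f(x_1):=\lim_{x_2\searrow 0}u(x_1,x_2)$, whose existence is guaranteed by Theorem~1.1 of~\cite{MR3516886}, is positively $1$-homogeneous with $f(1)=0$ and therefore vanishes identically on $(0,+\infty)$. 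Thus the interior trace matches the vanishing exterior datum \eqref{ADX} continuously along the entire open positive $x_1$-axis.

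\textbf{Boundary $C^1$ regularity.} At each $(t,0)$ with $t>0$ we upgrade this continuous trace matching to $C^1$ regularity. The planar analogue of this step is Theorem~1.2 of~\cite{2019arXiv190405393D}. In the present three-dimensional setting we argue via a blow-up analysis at $(t,0,0)\in\partial E_u$: the admissible tangent $s$-minimal cones are restricted both by the graphicality of $E_u$ and by the exterior constraint $E_u\cap\{x_2<0\}=\{x_3<0\}\cap\{x_2<0\}$, and the continuity of $u$ at $(t,0)$ with vanishing value eliminates all candidates except the horizontal half-space $\{x_3<0\}$. Boundary Harnack-type estimates then translate this tangent plane information into $u\in C^1$ in a neighborhood of $(t,0)$ with horizontal gradient $\nabla u(t,0)=0$.

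\textbf{Assembly and the symmetric/combined cases.} Gluing the boundary $C^1$ property with the interior smoothness from Lemma~\ref{lemma33} on $\R\times(0,+\infty)$ and with $u\equiv 0$ on $\R\times(-\infty,0)$ gives $u\in C^1(\R^2\setminus\ell_-)$. The hypothesis \eqref{6tsoe-2} is handled by the reflection $x_1\mapsto -x_1$, which preserves all assumptions and produces $u\in C^1(\R^2\setminus\ell_+)$. If both hypotheses hold, intersecting yields $u\in C^1(\R^2\setminus\{0\})$; to extend $C^1$ across the origin one uses that any $1$-homogeneous function differentiable at $0$ is necessarily linear (since $u(\lambda x)/\lambda=u(x)\to \nabla u(0)\cdot x$ as $\lambda\searrow 0$), and combines the boundary $C^1$ information along both half-axes with the continuity \eqref{ADX3} at the origin to establish differentiability at $0$, yielding $u\in C^1(\R^2)$.

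\textbf{Main obstacle.} The genuinely delicate point is the middle step: upgrading pointwise continuity to $C^1$ regularity for a three-dimensional nonlocal minimal graph, where the planar theorem of~\cite{2019arXiv190405393D} is not directly applicable and a new tangent cone classification is required. Homogeneity is the essential structural simplification, as it effectively reduces the tangent cone analysis to a one-dimensional angular problem on which the available classification of homogeneous $s$-minimal graphs and the boundary Harnack machinery can be brought to bear.
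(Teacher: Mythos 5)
Your first step (homogeneity propagates the vanishing trace from $(1,0)$ to every $(t,0)$ with $t>0$) and your overall skeleton (blow-up at a boundary point, classify the tangent cone, upgrade to $C^1$) match the paper. But the middle step, which you yourself flag as ``the genuinely delicate point,'' is asserted rather than proved, and this is where the entire content of the lemma lies. Concretely, two things are missing. First, the classification of the blow-up at $(t,0,0)$: the paper obtains it by showing that the second blow-up $F$ of $E_v$, $v(x):=u(x_1+1,x_2)$, is invariant under translations in the $e_1$-direction (a nontrivial computation exploiting that $u$ is homogeneous \emph{about the origin} while the blow-up is centered at $(1,0)$), then applies dimensional reduction to write $F=\R\times G$ with $G$ a planar cone, leaving only three candidates; the two quarter-space candidates are then excluded by Lemma~\ref{554327h:AJSJD2374859ty}, whose proof is a substantial sliding-ball argument showing that such a blow-up forces a jump of the trace, contradicting $u(t,0)=0$. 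Saying that ``continuity eliminates all candidates except the half-space'' names the conclusion of that lemma without supplying its mechanism, and without the translation invariance you do not even have the three-candidate list to begin with.

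Second, the upgrade from ``the blow-up is $\{x_3<0\}$'' to ``$\nabla u(x)\to 0$ as $x\to(t,0)$ from $\{x_2>0\}$'' is not a boundary Harnack statement. The paper uses the density estimates of Caffarelli--Roquejoffre--Savin to convert $L^1_{\rm loc}$ convergence of $kE_v$ into uniform flatness of $\partial(kE_v)$ in the slab $[-\tfrac12,\tfrac12]\times[\tfrac12,1]\times\R$, then applies the \emph{interior} improvement of flatness there to bound $|\nabla v|$ at points whose distance to $\{x_2=0\}$ is comparable to $1/k$, and finally uses homogeneity to sweep this dyadic gradient bound into the full limit \eqref{3rt7uJS:SP}. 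Invoking ``boundary Harnack-type estimates'' does not produce a gradient bound up to the boundary here. A smaller but genuine flaw: in the case where both \eqref{6tsoe} and \eqref{6tsoe-2} hold, your argument for differentiability at the origin does not work as written --- continuity at $0$ together with $C^1$ regularity on $\R^2\setminus\{0\}$ does not imply differentiability at $0$ for a $1$-homogeneous function (consider $u(x)=|x|$); one needs the full gradient information $\lim_{x\to(q,0)}\nabla u(x)=0$ on both half-axes together with the $0$-homogeneity of $\nabla u$ to conclude.
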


\begin{proof} We suppose that~\eqref{6tsoe}
is satisfied,
since the case in which~\eqref{6tsoe-2} holds true is similar
(and so is the case
in which both~\eqref{6tsoe}
and~\eqref{6tsoe-2} are fulfilled).
Using~\eqref{ADX6} and~\eqref{6tsoe},
for any~$\tau>0$,
\begin{equation}\label{8hsfr}u(\tau,0):=
\lim_{x\to (\tau,0)} u(x)
=0.
\end{equation}
We define
\begin{equation}\label{sucjp}
v(x_1,x_2):=u(x_1+1,x_2).\end{equation}
We consider the homogeneous second blow up~${F}$
of~$E_v$ (see Lemmata~2.2 and~2.3 in~\cite{2019arXiv190405393D}),
and we have that, as~$k\to+\infty$,
\begin{equation}\label{INCLA0}
\begin{split}&{\mbox{up to a subsequence, $\chi_{k E_v}$ converges to~$\chi_{{F}}$
in~$L^1_{\rm loc}(\R^3)$,}}\\
&{\mbox{and $kE_v$ converges to~$F$
locally in the Hausdorff distance.}}\end{split}
\end{equation}
By~\eqref{ADX},
\begin{equation}\label{Gina}
{{F}}\cap\{x_2<0\}=\R\times(-\infty,0)\times(-\infty,0).
\end{equation}
In addition, we claim that
\begin{equation}\label{DUIA} {{F}}+(r,0,0)={{F}},\end{equation}
for all~$r\in\R$.
Indeed, we set~$v_k(x):=kv\left(\frac{x}k\right)$, and,
by~\eqref{ADX6}, we see that, for every~$x\in\R^2$,
\begin{eqnarray*}&&v_k\left(\frac{k+1}{k}x_1+1,\frac{k+1}{k}x_2
\right)=
k v\left( \frac{k+1}{k^2}x_1+\frac1k,\frac{k+1}{k^2}x_2\right)\\&&\quad=
k u\left( \frac{k+1}{k^2}x_1+\frac1k+1,\frac{k+1}{k^2}x_2\right)=
k u\left( \frac{k+1}{k^2}x_1+\frac{k+1}k,\frac{k+1}{k^2}x_2\right)\\&&\quad=
(k+1)\,
u\left( \frac{x_1}k+1,\frac{x_2}k\right)=(k+1)\,
v\left( \frac{x_1}k,\frac{x_2}k\right)=\frac{k+1}{k}\,v_k(x).
\end{eqnarray*}
For this reason, taking the limit as~$k\to+\infty$
and recalling~\eqref{INCLA0},
we see that~$ {{F}}+(1,0,0)={{F}}$.
Since~$F$ is a cone (see Lemma~2.2 in~\cite{2019arXiv190405393D}),
this completes the proof of~\eqref{DUIA}.

Accordingly, by~\eqref{DUIA} and the dimensional reduction
(see~\cite{MR2675483}),
we can write that~${{F}}=\R\times G$, for some cone~$G\subseteq\R^2$
which is locally $s$-minimal in~$(0,+\infty)\times\R$.
In view of~\eqref{Gina} we have that~$G\cap ((-\infty,0)\times\R)=
(-\infty,0)\times(-\infty,0)$. Therefore,
by minimality,
\begin{equation}\label{nosn-21}\begin{split}
&{\mbox{either~$G=\R\times(-\infty,0)$,
or~$G=(-\infty,0)\times(-\infty,0)$,}}\\&
{\mbox{or~$G=\big( (-\infty,0)\times(-\infty,0)\big)\cup \big(
(0,+\infty)\times\R\big)$.}}\end{split}\end{equation}
We claim that
\begin{equation}\label{nosn-22}
G=\R\times(-\infty,0).\end{equation}
The proof of~\eqref{nosn-22} is by contradiction. Suppose not,
then, by~\eqref{nosn-21}, we can suppose that~$
G=(-\infty,0)\times(-\infty,0)$
(the case~$G=\big( (-\infty,0)\times(-\infty,0)\big)\cup \big(
(0,+\infty)\times\R\big)$
being similar), and therefore
\begin{equation*}
{{F}}=\R\times(-\infty,0)\times(-\infty,0).\end{equation*}
This says that we can exploit Lemma~\ref{554327h:AJSJD2374859ty}
here (with~$u$ replaced by~$v$), and then deduce from~\eqref{FAG:1}
that
$$ \lim_{x_2\searrow0} u(1,x_2)=\lim_{x_2\searrow0} v(0,x_2)<0.$$
This is in contradiction with~\eqref{8hsfr}, whence the proof of~\eqref{nosn-22}
is complete.

{F}rom~\eqref{nosn-22}, it follows that
\begin{equation*}
{{F}}=\{x_3<0\}.\end{equation*}
{F}rom this and
the density estimates in~\cite{MR2675483}, we have that,
up to a subsequence, for every~$\delta>0$ there exists~$k(\delta)\in\N$
such that, if~$k\ge k(\delta)$,
\begin{equation}\label{4568rgu}
\partial (kE_v)\cap\left( [-1,1]\times\left[ \frac14,2\right]\times\R\right)\subseteq\{|x_3|\le\delta\}.
\end{equation}
Now, to complete the proof of the desired result, we
need to show that~$u\in C^1(\R^2\setminus\ell_-)$.
By~\eqref{ADX} we know that~$u\in C^1(\R\times(-\infty,0))$,
with~$\nabla u=0$ in~$\R\times(-\infty,0)$.
Moreover, by~\cite{MR3934589}, we know that~$u\in C^1(\R\times(0,+\infty))$.
Hence, to complete the proof of the desired result, it is enough to show that,
for all~$q>0$,
\begin{equation}\label{3rt7uJS:SP}
\lim_{ {x\to(q,0)}\atop{x_2>0}}|\nabla u(x)|=0.
\end{equation}
We observe that~\eqref{3rt7uJS:SP} is proved
once we demonstrate that
\begin{equation}\label{3rt7uJS:SP2}
\lim_{ {t\searrow0}}|\nabla u( 1,t)|=0.
\end{equation}
Indeed, if~\eqref{3rt7uJS:SP2} holds true,
using~\eqref{ADX6} we have that
$$ \lim_{ {x\to(q,0)}\atop{x_2>0}}|\nabla u(x)|=
\lim_{ {x\to(q,0)}\atop{x_2>0}}\left|
\nabla u\left(1,\frac{x_2}{x_1}\right)\right|=
\lim_{ {t\searrow0}}|\nabla u(1,t)|=0,
$$
which gives~\eqref{3rt7uJS:SP} in this case. 

In view of these considerations, we focus on the proof of~\eqref{3rt7uJS:SP2}.
For this, we exploit the notation in~\eqref{sucjp}, and we aim
at showing that
\begin{equation*}
\lim_{ t\searrow0}|\nabla v(0,t)|=0,
\end{equation*}
or, equivalently, that for all~$\e>0$ there exists~$k_\e\in\N$
such that if~$k\ge k_\e$ and~$t\in
\left( 0,\frac1k\right)$, we have that~$|\nabla v(0,t)|\le\e$.

For this, it is sufficient to show that
if~$k\ge k_\e$ and~$t\in
\left[ \frac{1}{k+1},\frac1k\right)$
then
\begin{equation}\label{COnsit949}
|\nabla v(0,t)|\le\e.
\end{equation}
To this end, we suppose, by contradiction, that there exists~$a>0$
such that for every~$K\in\N$ with~$K\ge1$ there exist~$k\in\N$ with~$k\ge K$
and~$t^{(k)}\in
\left[ \frac{1}{k+1},\frac1k\right)$
such that
\begin{equation}\label{P9iqdwf}|\nabla v(0,t^{(k)})|\ge a.\end{equation}
We let
\begin{equation}\label{7SH37eud} T^{(k)}:=kt^{(k)}\in
\left[ \frac{k}{k+1},1\right)\subseteq
\left[ \frac{1}{2},1\right].
\end{equation}
By~\eqref{4568rgu} and the improvement of flatness result
in~\cite{MR2675483}, choosing~$\delta$ conveniently small,
we know that, for sufficiently
large~$k$, the set~$
(kE_v)\cap\left( \left[-\frac12,\frac12\right]\times\left[ \frac12,1\right]\times\R\right)$
is the subgraph of a function~$w$, with~$|\nabla w|\le \frac{a}{2}$.
By construction,
$$ w(x)=k v\left(\frac{x}{k}\right),$$
and hence, using~\eqref{7SH37eud},
$$ \frac{a}2\ge |\nabla w(0,T^{(k)})|=\left|\nabla
v\left(0,\frac{T^{(k)}}{k}\right)\right|=|\nabla v(0,t^{(k)})|.$$
This is in contradiction with~\eqref{P9iqdwf},
and the proof of~\eqref{COnsit949}
is thereby complete.
\end{proof}

It is now convenient to take into account the ``Jacobi field''
associated to the fractional perimeter (see e.g.
formula~(1.5) in~\cite{MR3798717},
or formula~(4.30) in~\cite{SAEZ}, or Lemma~C.1
in~\cite{MR3824212}, or Section~1.3 in~\cite{MR3934589}), namely we define~$\sigma:=\frac{1+s}2$,
\begin{eqnarray*}&& {\mathcal{L}}^\sigma_E\,\eta(x):=
\int_{\partial E} \frac{\eta(y)-\eta(x)}{|y-x|^{n+2\sigma}}\,d{\mathcal{H}}^n(y),\\
&&A^\sigma_E(x):=\sqrt{
\int_{\partial E} \frac{1-\nu(y)\cdot\nu(x)}{|y-x|^{n+2\sigma}}\,d{\mathcal{H}}^n(y)}\\
{\mbox{and }}&&
{\mathcal{J}}^\sigma_E\,\eta(x):={\mathcal{L}}^\sigma_E\,\eta(x)
+\big( A^\sigma_E(x)\big)^2\,\eta(x),
\end{eqnarray*}
where~$\nu=(\nu_1,\dots,\nu_{n+1})$ is the exterior normal of~$E$
(we will often write~$\nu(x)$ to denote this normal at the point~$(x,u(x))$
if~$E=E_u$).
It is known (see Theorem~1.3(i) in~\cite{MR3934589}) that if~$E$
is $s$-minimal in~$B_r(x)$, with~$x\in\partial E$,
and~$(\partial E)\cap B_r(x)$ is of class~$C^3$, then
\begin{equation}\label{JAN-AIKS}
{\mathcal{J}}^\sigma_E\,\nu_i(x)=0,\qquad{\mbox{ for every }}i\in\{1,\dots,n+1\}.
\end{equation}
With this notation, we have the following classification result:

\begin{lemma}\label{nu-co}
Let~$u\in C^1(\R^n)$ be an $s$-minimal graph in
a domain~$\Omega\subseteq\R^n$
and assume that there exist~$i\in\{1,\dots,n+1\}$
and~$x^\star\in\Omega$ such that \begin{equation}\label{8US:1197mi}0\le
\nu_i(x^\star)\le\nu_i(x)\qquad{\mbox{
for every~$x\in\R^n$.}}\end{equation} Then~$\nu_i$ is constant
in~$\R^n$.
\end{lemma}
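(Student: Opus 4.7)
The plan is to apply the Jacobi-field identity~\eqref{JAN-AIKS} at the point~$x^\star$, exploiting the hypothesis that $\nu_i(x^\star)$ is a nonnegative global minimum of $\nu_i$. Since $u$ is an $s$-minimal graph in $\Omega$, the interior regularity theory for $s$-minimal graphs (Theorem~1.1 in~\cite{MR3934589}) gives $u\in C^\infty(\Omega)$, so $\partial E$ is smooth in a neighborhood of $(x^\star,u(x^\star))$ and~\eqref{JAN-AIKS} applies there, yielding
$$
{\mathcal{L}}^\sigma_E \nu_i(x^\star)=-\bigl(A^\sigma_E(x^\star)\bigr)^2\,\nu_i(x^\star)\le 0,
$$
where the inequality uses the standing assumption~$\nu_i(x^\star)\ge0$.

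Next, I would read off the opposite sign from the minimum-point structure of~$\nu_i$. Writing out the nonlocal operator,
$$
{\mathcal{L}}^\sigma_E \nu_i(x^\star)=\int_{\partial E}\frac{\nu_i(y)-\nu_i(x^\star)}{|y-x^\star|^{n+2\sigma}}\,d{\mathcal{H}}^n(y),
$$
and recalling that~$\partial E$ is parameterized by the graph map~$x\mapsto(x,u(x))$, the hypothesis~$\nu_i(x^\star)\le\nu_i(x)$ for every~$x\in\R^n$ makes the integrand pointwise nonnegative, so
$$
{\mathcal{L}}^\sigma_E \nu_i(x^\star)\ge 0.
$$
Combining the two bounds squeezes this quantity to zero.

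With~${\mathcal{L}}^\sigma_E \nu_i(x^\star)=0$ and a pointwise nonnegative integrand, one concludes that~$\nu_i(y)=\nu_i(x^\star)$ for~${\mathcal{H}}^n$-a.e.~$y\in\partial E$; the~$C^1$ regularity of~$u$ propagates this equality to every point of~$\partial E$, i.e.\ to every~$x\in\R^n$, which is the desired constancy. The only delicate point is ensuring that~\eqref{JAN-AIKS} is truly available at~$x^\star$, and this is precisely why the hypothesis places~$x^\star$ inside the minimality domain~$\Omega$ where interior smoothness is in force; modulo this verification, the argument is a clean nonlocal maximum-principle squeeze carried by the Jacobi-field identity.
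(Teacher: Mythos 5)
Your proposal is correct and follows essentially the same route as the paper: interior smoothness from~\cite{MR3934589} makes the Jacobi-field identity~\eqref{JAN-AIKS} available at~$x^\star$, the sign condition~$\nu_i(x^\star)\ge0$ kills the zero-order term in one direction, the global minimality of~$\nu_i(x^\star)$ gives the opposite sign for the nonlocal term, and the squeeze forces the nonnegative integrand to vanish, hence~$\nu_i\equiv\nu_i(x^\star)$. No gaps.
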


\begin{proof} We observe that~$u\in C^\infty(\Omega)$, due to~\cite{MR3934589},
and therefore we can exploit~\eqref{JAN-AIKS} and obtain that
\begin{eqnarray*}&& 0={\mathcal{J}}^\sigma_E\,\nu_i(x^\star)=
\int_{\partial E} \frac{\nu_i(y)-\nu_i(x^\star)}{|y-x^\star|^{n+2\sigma}}
\,d{\mathcal{H}}^n(y)
+\big( A^\sigma_E(x^\star)\big)^2\,\nu_i(x^\star)\\&&\qquad\qquad
\ge \int_{\partial E} \frac{\nu_i(y)-\nu_i(x^\star)}{|y-x^\star|^{n+2\sigma}}
\,d{\mathcal{H}}^n(y).\end{eqnarray*}
This and~\eqref{8US:1197mi} give that~$\nu_i(y)=\nu_i(x^\star)$
for every~$y\in\R^n$.
\end{proof}

{F}rom this and Lemma~\ref{STEP1} we deduce
that the boundary continuity of homogeneous
nonlocal minimal graphs give full rigidity and symmetry results:

\begin{corollary}\label{STEP11}
Let
the assumptions of Theorem~\ref{QC} hold.
Assume also that 
\begin{equation*}
\lim_{x\to (1,0)} u(x)=0=\lim_{x\to (-1,0)} u(x).
\end{equation*}
Then~$u(x)=0$ for all~$x\in\R^2$.
\end{corollary}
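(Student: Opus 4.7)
The plan is to combine the $C^1$ regularity provided by Lemma~\ref{STEP1} with the rigidity encoded in Lemma~\ref{nu-co}, applied to the vertical component $\nu_3$ of the exterior unit normal to $E_u$.

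First, Lemma~\ref{STEP1} gives $u\in C^1(\R^2)$. Using the homogeneity~\eqref{ADX6} in the form $u(x)=|r|\,u(x/|r|)$ together with the continuity assumptions at $(\pm1,0)$ and~\eqref{ADX3}, one sees that $u(x)\to 0$ as $x\to(r,0)$ for every $r\in\R$. Combined with~\eqref{ADX}, this yields $u\equiv 0$ on the closed lower half-plane $\{x_2\leq 0\}$, and the $C^1$ regularity then forces $\nabla u\equiv 0$ there.

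Next, recalling that $u$ is smooth in the open half-plane $\Omega:=\R\times(0,+\infty)$ by the interior regularity of~\cite{MR3934589}, consider $\nu_3(x)=(1+|\nabla u(x)|^2)^{-1/2}\in(0,1]$. Homogeneity of degree $1$ of $u$ makes $\nabla u$ (and hence $\nu_3$) $0$-homogeneous on $\R^2\setminus\{0\}$, and continuity of $\nabla u$ on $\R^2$ gives continuity of $\nu_3$. Its global infimum is therefore attained on the compact upper semicircle $S^1\cap\{x_2\geq 0\}$, at whose endpoints $(\pm 1,0)$ one has $\nu_3=1$. If the infimum equals $1$ we conclude $\nu_3\equiv 1$ directly; otherwise it is attained at some $\omega^\star\in S^1\cap\{x_2>0\}\subset\Omega$, where $0<\nu_3(\omega^\star)\leq\nu_3(x)$ for every $x\in\R^2$, and Lemma~\ref{nu-co} with $i=3$ and $x^\star=\omega^\star$ applies to force $\nu_3\equiv\nu_3(\omega^\star)$ on $\R^2$; combined with $\nu_3(1,0)=1$ this constant must equal $1$.

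In both cases $\nu_3\equiv 1$, so $\nabla u\equiv 0$ on $\R^2$, whence $u$ is constant; since $u=0$ on $\{x_2<0\}$, this constant is $0$ and $u\equiv 0$. The only delicate point is to ensure that the candidate minimizer $\omega^\star$ lies in the smooth part of the boundary so that the Jacobi-field identity~\eqref{JAN-AIKS} underlying Lemma~\ref{nu-co} is available; this is exactly guaranteed by placing $\omega^\star$ in $\Omega$ via the dichotomy above, the non-negativity hypothesis $\nu_3(\omega^\star)\geq 0$ being automatic for a graph.
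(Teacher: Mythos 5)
Your proposal is correct and follows essentially the same route as the paper: Lemma~\ref{STEP1} gives $u\in C^1(\R^2)$, the zero-homogeneity of $\nu_3$ reduces matters to its minimum over $S^1$, and the dichotomy between the minimum being attained in $\{x_2>0\}$ (where Lemma~\ref{nu-co} applies) or in $\{x_2\le 0\}$ (where $\nu_3=1$) yields $\nu_3\equiv 1$ and hence $u\equiv 0$. The only cosmetic difference is that you localize the minimizer on the closed upper semicircle rather than on all of $S^1$, which is an equivalent formulation of the same dichotomy.
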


\begin{proof} By Lemma~\ref{STEP1}, we know that~$u\in C^1(\R^2)$.
Also, recalling~\eqref{ADX}, we have that~$\nabla u(x)=0$
if~$x_2<0$. As a consequence, we see that~$\nabla u(x)=0$
for all~$x$ with~$x_2\le0$ and therefore
\begin{equation}\label{nn}
\nu_3(x)=1 \qquad{\mbox{ for all~$x$ with~$x_2\le0$.}}
\end{equation}
Now we take~$x^\star=(x^\star_1,x^\star_2)$ such that
$$ \nu_3(x^\star)=\min_{S^1} \nu_3.$$
By~\eqref{ADX6}, we know that, for any~$x\in\R^2$,
\begin{equation}\label{7trferf274} \nu_3(x)=\nu_3\left( \frac{x}{|x|}\right)\ge\nu_3(x^\star).\end{equation}
We claim that
\begin{equation}\label{nu3}
\nu_3(x)=1\qquad{\mbox{ for all~$x\in\R^2$.}}
\end{equation}
To prove this, we distinguish two cases. If~$x^\star_2>0$,
recalling~\eqref{7trferf274},
we are in the position of using Lemma~\ref{nu-co}.
In this way we obtain that, for every~$x\in\R^2$,
$$ \nu_3(x)=\nu_3(0)=1,$$
which proves~\eqref{nu3} in this case.

If instead~$x_2^\star\le0$, we deduce from~\eqref{nn} and~\eqref{7trferf274} that,
for any~$x\in\R^2$,
$$ 1=\nu_3(x^\star)\le \nu_3(x)\le |\nu(x)|=1,$$
thus completing the proof of~\eqref{nu3}.

{F}rom~\eqref{nu3} we deduce that~$\nu(x)=(0,0,1)$ 
and hence~$\nabla u(x)=0$ for every~$x\in\R^2$,
from which we obtain the desired result.
\end{proof}

Another useful ingredient towards the proof of Theorem~\ref{QC}
consists in the following rigidity result:

\begin{lemma}\label{HAusjc293ierf}
Let~$u\in L^\infty_{\rm loc}(\R^n)$.
Let~$\Omega\subseteq\R^n$ be
a smooth and convex
domain.
Assume that~$u\in C^1(\R^n\setminus\overline{\Omega})$
and
that there exists~$x^\star\in\Omega$
such that
\begin{equation}\label{1g6778}
\partial_1 u(x^\star)\ge\partial_1 u(x)\qquad{\mbox{ for all }}
x\in \R^n\setminus (\partial\Omega).
\end{equation}
Suppose also that there exists a ball~${\mathcal{B}}\Subset
\R^n\setminus (\partial\Omega)$ such that
\begin{equation}\label{1g6778C}
\partial_1 u(x^\star)>\partial_1 u(x)\qquad{\mbox{ for all }}
x\in {\mathcal{B}}.
\end{equation}
Then,
$u$ cannot be an $s$-minimal graph in~$\Omega$.
\end{lemma}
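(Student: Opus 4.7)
The plan is to argue by contradiction, assuming that $u$ is an $s$-minimal graph in~$\Omega$, and to reach an absurd by means of a nonlocal strong maximum principle applied to the Jacobi field generated by horizontal translations of~$E_u$.

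First, by interior regularity for nonlocal minimal graphs (Theorem~1.1 in~\cite{MR3934589}), we have $u\in C^\infty(\Omega)$, so $\partial_1 u$ is smooth on~$\Omega$ and $\nabla(\partial_1 u)(x^\star)=0$ at the interior maximum~$x^\star$. Since translations preserve the $s$-minimality of~$E_u$, the shifted functions $u^{(t)}(x):=u(x-te_1)$ form a family of $s$-minimal graphs in $\Omega+te_1$; differentiating in $t$ at $t=0$ (that is, computing the first variation of the graph formulation of the nonlocal mean curvature in~\eqref{jfgnjbj96768769}) yields the linearized identity $L_u(\partial_1 u)\equiv 0$ in~$\Omega$, where
\[
L_u h(x):=\mathrm{PV}\int_{\R^n}\bigl(h(x)-h(y)\bigr)\,K_u(x,y)\,dy,\qquad K_u(x,y):=\frac{c_{n,s}}{\bigl(|x-y|^2+(u(x)-u(y))^2\bigr)^{(n+1+s)/2}}
\]
is the linearization of the nonlocal minimal surface operator for graphs, a symmetric nonlocal operator with strictly positive kernel that annihilates constants.

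I would then set $h(x):=\partial_1 u(x^\star)-\partial_1 u(x)$, so that $L_u h\equiv 0$ in~$\Omega$ as well, and collect the sign information available from the hypotheses: by~\eqref{1g6778}, \eqref{1g6778C} and the fact that $\nabla h(x^\star)=0$, one has $h\ge 0$ on $\R^n\setminus\partial\Omega$, $h(x^\star)=0$, $h>0$ on~$\mathcal{B}$, and $h(y)=O(|y-x^\star|^2)$ near~$x^\star$. The last property tames the singularity of $K_u(x^\star,\cdot)$, so the principal value in $L_u h(x^\star)$ reduces to an ordinary Lebesgue integral, and evaluating the Jacobi identity at $x=x^\star$ yields the chain
\[
0\;=\;L_u h(x^\star)\;=\;-\int_{\R^n}K_u(x^\star,y)\,h(y)\,dy\;\leq\;-\int_{\mathcal{B}}K_u(x^\star,y)\,h(y)\,dy\;<\;0,
\]
which is the desired contradiction.

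The step that I expect to require the most care is the rigorous derivation of the linearized equation together with the explicit identification of the positive kernel~$K_u$, and the accompanying verification that the tail of $\int K_u(x^\star,y)\,h(y)\,dy$ converges absolutely: since $u$ is only assumed to lie in $L^\infty_{\mathrm{loc}}(\R^n)$ and no global pointwise bound on $|\partial_1 u|$ is provided by the hypotheses, one probably needs to exploit some additional decay/growth information on~$u$ coming from the concrete situation in which the lemma is invoked (for example, through the convexity of~$\Omega$ and the $C^1$ behaviour of~$u$ at infinity). The fact that $\partial\Omega$ has Lebesgue measure zero, on the other hand, makes its defect harmless throughout the integral arguments.
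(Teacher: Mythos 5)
Your formal picture is the right one, and it is in fact exactly the heuristic that the paper states and then declares unusable: the paper writes that ``this approach cannot be implemented directly, since $u$ is not smooth across $\partial\Omega$ and therefore one cannot justify the derivative of~\eqref{g6778} under the integral sign.'' The gap in your proposal is precisely the step you flag as delicate but do not resolve: the pointwise Jacobi identity $L_u(\partial_1 u)(x^\star)=0$. Under the stated hypotheses $u$ is only $L^\infty_{\rm loc}$ and may \emph{jump} across $\partial\Omega$ (this is the stickiness situation in which the lemma is actually applied). Your remark that ``$\partial\Omega$ has Lebesgue measure zero'' misses where the obstruction sits: what matters is not the measure of $\partial\Omega$ in the $y$-integral but the measure of the set of $y$ for which the segment $[y,\,y+\e e_1]$ crosses $\partial\Omega$. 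That set has measure of order $\e$, and on it the difference quotient $\frac{1}{\e}\big(G_\e(y)-G_0(y)\big)$ is of size $1/\e$ because of the jump of $u$; the product is an $O(1)$ contribution with no sign, so dominated convergence fails and the limit of the difference quotients need not be the formal linearized operator. Consequently the identity $0=-\int K_u(x^\star,y)h(y)\,dy$ is not available, and your strong-maximum-principle chain collapses at its first equality. Your fallback suggestion that one might import ``additional decay/growth information from the concrete situation'' is also off target: the lemma is proved in the paper under exactly the stated hypotheses.

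The paper's proof shows how to salvage the idea: it keeps the difference quotients and extracts only a \emph{one-sided} inequality. Away from the origin and off $\mathcal{B}$, the convexity of $\Omega$ guarantees the segment $[y,y+\e e_1]$ meets $\partial\Omega$ in at most one point, so $u(y+\e e_1)-u(y)$ is still the integral of $\partial_1 u$ over the two open sub-segments and is bounded above by $\e\,\partial_1 u(x^\star)$ thanks to~\eqref{1g6778}; this yields an $O(\e)$ upper bound for that region regardless of any jump. Near the singularity one symmetrizes $y\mapsto -y$ and uses interior smoothness to get an $O(\e\,|y|)$ bound, contributing $O(\delta^{1-s})$ after integration. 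Only the $\mathcal{B}$-portion is kept with its (negative) sign, and letting $\e\searrow0$ and then $\delta\searrow0$ produces $0\le\int_{\mathcal{B}}\big(\partial_1 u(y)-\partial_1 u(x^\star)\big)a_0(y)|y-x^\star|^{-n-s-1}dy$, contradicting~\eqref{1g6778C}. If you want to repair your write-up, you must replace the claimed identity $L_u(\partial_1 u)(x^\star)=0$ by this one-sided difference-quotient analysis; the convexity of $\Omega$ and the extremality of $\partial_1 u$ at $x^\star$ are not decorations but the precise ingredients that give the signed bounds.
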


\begin{proof} Up to a translation, we suppose that~$x^\star=0$.
To prove the desired result,
we argue for a contradiction, supposing that
$u$ is an $s$-minimal graph in~$\Omega$.
Then, by~\cite{MR3934589},
we have that~$u$ is smooth inside~$\Omega$ and thus,
by formula~(49)
in~\cite{MR3331523}, we know that, for every~$x\in\Omega$,
\begin{equation}\label{g6778}
\int_{\R^n} F\left( \frac{u(x+y)-u(x)}{|y|}\right)\,
\frac{dy}{|y|^{n+s}}=0,\end{equation}
with $$F(t):=\int_0^t \frac{d\tau}{(1+\tau^2)^{\frac{n+1+s}2}}.$$
Now, the idea that we want to implement is the following:
if one formally
takes a derivative with respect to~$x_1$ of~\eqref{g6778}
and computes it at the origin, the positivity of~$F'$ and~\eqref{1g6778}
leads to the fact that~$\partial_1u$ must be constant,
in contradiction with~\eqref{1g6778C}. Unfortunately,
this approach cannot be
implemented directly, since~$u$
is not smooth across~$\partial\Omega$ and therefore
one cannot justify the derivative of~\eqref{g6778}
under the integral sign.

To circumvent this difficulty, we argue as follows.
We let~$\e\in(0,1)$ to
be taken as small as we wish in what follows.
We also take~$\delta>0$ such that~$B_{3\delta}\subset\Omega$,
and we set
$$ G_\e(y):=F\left( \frac{u(\e e_1+y)-u(\e e_1)}{|y|}\right),$$
where~$e_1=(1,0,\dots,0)\in\R^n$. We also denote by~$G_0$ the function~$G_\e$
when~$\e=0$.

We observe that
\begin{equation}\label{adesso}
\begin{split}
G_\e(y)-G_0(y)\;=\;&F\left( \frac{u(\e e_1+y)-u(\e e_1)}{|y|}\right)-
F\left( \frac{u(y)-u(0)}{|y|}\right)\\
=\;& a_\e(y)\;
\,\frac{u(\e e_1+y)-u(\e e_1)-u(y)+u(0)}{|y|},
\end{split}\end{equation}
where
$$ a_\e(y):=
\int_0^1 F'\left( \frac{t[u(\e e_1+y)-u(\e e_1)]+(1-t)[u(y)-u(0)]}{|y|}\right)\,dt\in(0,1].$$
We remark that
\begin{equation}\label{VUSJDrg}
\lim_{\e\searrow0}a_\e(y)=a_0(y):=
F'\left( \frac{ u(y)-u(0) }{|y|}\right).\end{equation}
Furthermore, we observe that, if~$\e\in(0,\delta)$,
\begin{equation}\label{2347gBS12}\Big|
u(\e e_1)-u(0)- \e\partial_1 u(0)\Big|\le\e^2\| u\|_{C^2(B_{\delta})}.
\end{equation}
Now, we let~${\mathcal{R}}:=\R^n\setminus (\partial\Omega)$
and we
distinguish two regions of space, namely if~$y\in {\mathcal{R}}\setminus B_\delta$
and if~$y\in B_\delta$. Firstly, if~$y\in {\mathcal{R}}\setminus B_\delta$,
we consider the segment joining~$y$ to~$\e e_1+y$,
and we observe that it meets~$\partial\Omega$ in at most
one point, in light of the convexity of the domain.
That is, we have that~$y+\tau e_1\in {\mathcal{R}}$
for every~$\tau\in[0,\tau_\e)\cup(\tau_\e,\e)$
for a suitable~$\tau_\e\in(0,\e]$
(with the notation that when~$\tau_\e=\e$
the set~$(\tau_\e,\e)$ is empty). Then, we have that
\begin{eqnarray*}
u(\e e_1+y)-u(y)&=&u(y+\e e_1)-
u(y+\tau_\e e_1)+u(y+\tau_\e e_1)
-u(y)\\&=&
\int_{\tau_\e}^\e \partial_1 u(y+\tau e_1)\,d\tau+
\int^{\tau_\e}_0 \partial_1 u(y+\tau e_1)\,d\tau\\&\le&
\int_{\tau_\e}^\e \partial_1 u(0)\,d\tau+
\int^{\tau_\e}_0 \partial_1 u(0)\,d\tau\\&=&\e\partial_1 u(0),
\end{eqnarray*}
thanks to~\eqref{1g6778}.
Therefore, for every~$y\in {\mathcal{R}}\setminus B_\delta$,
recalling~\eqref{adesso} and~\eqref{2347gBS12}
we have that
\begin{eqnarray*}
G_\e(y)-G_0(y)&\le&
a_\e(y)\;
\,\frac{\e\partial_1 u(0)-u(\e e_1)+u(0)}{|y|}\\
&\le&
\frac{\e^2\| u\|_{C^2(B_{\delta})}}{|y|}.
\end{eqnarray*}
Consequently,
\begin{equation}\label{JSY:001}
\int_{ {\mathcal{R}}\setminus (B_\delta\cup{\mathcal{B}}) }\frac{
G_\e(y)-G_0(y)}{\e}\,\frac{dy}{|y|^{n+s}}\le
\e\,\| u\|_{C^2(B_{\delta})}\,
\int_{ {\mathcal{R}}\setminus B_\delta }\frac{dy}{|y|^{n+s+1}}
\le \frac{C\,\e\,\| u\|_{C^2(B_{\delta})}}{\delta^{s+1}},
\end{equation}
for some~$C>0$.

Furthermore, if~$y\in{\mathcal{B}}\setminus B_\delta$,
we recall~\eqref{2347gBS12} and we write that
\begin{eqnarray*}
u(\e e_1+y)-u(y)-u(\e e_1)+u(0)&\le& \e
\int_0^1\partial_1u(\tau \e e_1+y)\,d\tau-
\e\partial_1 u(0)+\e^2\| u\|_{C^2(B_{\delta})}\\
&=&\e
\int_0^1\big(\partial_1 u(\tau \e e_1+y)-
\partial_1 u(0)\big)\,d\tau+\e^2\| u\|_{C^2(B_{\delta})}.
\end{eqnarray*}
This gives that
\begin{equation}\label{JSY:002}
\begin{split}&
\int_{ {\mathcal{B}}\setminus B_\delta }\frac{
G_\e(y)-G_0(y)}{\e}\,\frac{dy}{|y|^{n+s}}\\ \le\;&\int_{ {\mathcal{B}}\setminus B_\delta }\left[
\int_0^1\big(\partial_1 u(\tau \e e_1+y)-
\partial_1 u(0)\big)\,d\tau\right]\,\frac{a_\e(y)\,dy}{|y|^{n+s+1}}+\frac{C\,
\e\| u\|_{C^2(B_{\delta})}}{ \delta^{s+1}}
,\end{split}\end{equation}
up to renaming~$C>0$.

Now we focus on the case~$y\in B_\delta$.
In this case, 
using that~$F'$ is even, we see that
\begin{eqnarray*}
&& G_\e(y)+G_\e(-y)-G_0(y)-G_0(-y)\\
&=& F\left( \frac{u(\e e_1+y)-u(\e e_1)}{|y|}\right)+F\left( \frac{u(\e e_1-y)-u(\e e_1)}{|y|}\right)
\\&&\qquad-F\left( \frac{u(y)-u(0)}{|y|}\right)-F\left( \frac{u(-y)-u(0)}{|y|}\right)
\\ &=&\int_0^\e
\Bigg[
F'\left( \frac{u(t e_1+y)-u(t e_1)}{|y|}\right)
\frac{\partial_1 u(t e_1+y)-\partial_1 u(t e_1)}{|y|}\\&&\qquad+
F'\left( \frac{u(t e_1-y)-u(t e_1)}{|y|}\right)
\frac{\partial_1 u(t e_1-y)-\partial_1 u(t e_1)}{|y|}
\Bigg]\,dt\\ &\le&\int_0^\e
\Bigg[
\Bigg| F'\left( \frac{u(t e_1+y)-u(t e_1)}{|y|}\right)\Bigg|\;
\frac{|\partial_1 u(t e_1+y)
+\partial_1 u(t e_1-y)-2\partial_1 u(t e_1)|
}{|y|}\\&&\qquad+\Bigg|F'\left( \frac{u(t e_1+y)-u(t e_1)}{|y|}\right)-
F'\left( \frac{u(t e_1-y)-u(t e_1)}{|y|}\right)\Bigg|\;
\frac{|\partial_1 u(t e_1-y)-\partial_1 u(t e_1)|}{|y|}
\Bigg]\,dt\\ &\le& C\,\int_0^\e
\Bigg[
\frac{|\partial_1 u(t e_1+y)
+\partial_1 u(t e_1-y)-2\partial_1 u(t e_1)|
}{|y|}\\&&\qquad+\Bigg|F'\left( \frac{u(t e_1+y)-u(t e_1)}{|y|}\right)-
F'\left( \frac{u(t e_1)-u(t e_1-y)}{|y|}\right)\Bigg|\;
\frac{|\partial_1 u(t e_1-y)-\partial_1 u(t e_1)|}{|y|}
\Bigg]\,dt
\\&\le& C\,\int_0^\e
\Bigg[
\frac{\|u\|_{C^3(B_{2\delta})}\,|y|^2
}{|y|}+
\frac{|u(t e_1+y)+u(t e_1-y)-2u(t e_1)|}{|y|}
\;\frac{|\partial_1 u(t e_1-y)-\partial_1 u(t e_1)|}{|y|}
\Bigg]\,dt\\&\le& C\,\int_0^\e
\Bigg[
\|u\|_{C^3(B_{2\delta})}\,|y|+
\frac{\|u\|_{C^2(B_{2\delta})}^2\,|y|^2}{|y|}
\Bigg]\,dt\\
&\le& C\,\big(1+\|u\|_{C^3(B_{2\delta})}^2\big)\,\e\,|y|,
\end{eqnarray*}
up to renaming~$C$.

As a consequence,
\begin{equation}\label{JSY:003}\begin{split}&
\int_{ B_\delta }\frac{
G_\e(y)-G_0(y)}{\e}\,\frac{dy}{|y|^{n+s}}
=\frac12
\int_{ B_\delta }\frac{
\big(G_\e(y)-G_0(y)\big)+\big(
G_\e(-y)-G_0(-y)\big)
}{\e}\,\frac{dy}{|y|^{n+s}}\\&\qquad
\le 
C\,\big(1+\|u\|_{C^3(B_{2\delta})}^2\big)\,
\int_{ B_\delta }\frac{dy}{|y|^{n+s-1}}=C\,\big(1+\|u\|_{C^3(B_{2\delta})}^2\big)\,\delta^{1-s},
\end{split}
\end{equation}
for some~$C>0$ independent of~$\e$ and~$\delta$.

Now we combine~\eqref{JSY:001}, \eqref{JSY:002} and~\eqref{JSY:003}
with~\eqref{g6778}. In this way, we find that
\begin{eqnarray*}
0&=&\frac1\e\left[
\int_{\R^n} F\left( \frac{u(\e e_1+y)-u(\e e_1)}{|y|}\right)\,
\frac{dy}{|y|^{n+s}}-
\int_{\R^n} F\left( \frac{u(y)-u(0)}{|y|}\right)\,
\frac{dy}{|y|^{n+s}}\right]\\&=&
\int_{\R^n}\frac{
G_\e(y)-G_0(y)}{\e}\,\frac{dy}{|y|^{n+s}}\\
&=&
\int_{{\mathcal{R}}\setminus( B_\delta\cup{\mathcal{B}})}\frac{
G_\e(y)-G_0(y)}{\e}\,\frac{dy}{|y|^{n+s}}+
\int_{B_\delta}\frac{
G_\e(y)-G_0(y)}{\e}\,\frac{dy}{|y|^{n+s}}\\&&\qquad
+
\int_{{\mathcal{B}}\setminus B_\delta}\frac{
G_\e(y)-G_0(y)}{\e}\,\frac{dy}{|y|^{n+s}}\\&\le&
\int_{ {\mathcal{B}}\setminus B_\delta }\left[
\int_0^1\big(\partial_1 u(\tau \e e_1+y)-
\partial_1 u(0)\big)\,d\tau\right]\,\frac{a_\e(y)\,dy}{|y|^{n+s+1}}\\
&&\qquad+\frac{2C\,
\e\| u\|_{C^2(B_{\delta})}}{\delta^{s+1}}+C\,\big(1+\|u\|_{C^3(B_{2\delta})}^2\big)\,\delta^{1-s}.
\end{eqnarray*}
For this reason, and recalling~\eqref{VUSJDrg},
when we take the limit as~$\e\searrow0$ we see that
$$ 0\le
\int_{ {\mathcal{B}}\setminus B_\delta }\big(\partial_1 u(y)-
\partial_1 u(0)\big)\,\frac{a_0(y)\,dy}{|y|^{n+s+1}}+C\,\big(1+\|u\|_{C^3(B_{2\delta})}^2\big)\,\delta^{1-s}
.$$
Now we can take the limit as~$\delta\searrow0$
and conclude that
$$ 0\le
\int_{ {\mathcal{B}} }\big(\partial_1 u(y)-
\partial_1 u(0)\big)\,\frac{a_0(y)\,dy}{|y|^{n+s+1}}
.$$
{F}rom this and~\eqref{1g6778C}
a contradiction plainly follows.
\end{proof}

The proof of Theorem~\ref{QC} will also rely on the following simple, but interesting,
calculus observation:

\begin{lemma}\label{87923-4656}
Let~$\delta>0$ and~$g\in C([0,\delta])\cap C^1((0,\delta])$.
Assume that
\begin{equation}\label{7US-dc}
\lim_{t\searrow0} g'(t)=+\infty
\end{equation}
and let
\begin{equation}\label{grty0}
V(t):=g(t)-tg'(t).
\end{equation}
Suppose also that the following limit exists
\begin{equation} \label{8903456wdf}
\R\cup\{-\infty\}\ni V(0):=\limsup_{t\searrow0} V(t)\le g(0).\end{equation}
Then there exists~$t_0\in(0,\delta)$ such that
\begin{equation}\label{jva7823}V(t_0)> V(0).\end{equation}
In particular, there exists~$t_\star\in(0,\delta]$ such that
\begin{equation}\label{jva7823BIS} \max_{t\in[0,\delta]} V(t)=V(t_\star).\end{equation}
\end{lemma}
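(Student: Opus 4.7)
The plan is to deduce both~\eqref{jva7823} and~\eqref{jva7823BIS} from the elementary identity
\[
\frac{d}{dt}\!\left(\frac{g(t) - c}{t}\right) \;=\; \frac{t g'(t) - g(t) + c}{t^{2}} \;=\; \frac{c - V(t)}{t^{2}},
\qquad t \in (0,\delta],\ c \in \R,
\]
which converts a pointwise upper bound on $V$ into a monotonicity statement for the auxiliary quotient $h_{c}(t) := (g(t)-c)/t$.

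For~\eqref{jva7823}, I would argue by contradiction: suppose $V(t) \le V(0)$ for every $t \in (0,\delta]$. The case $V(0)=-\infty$ is vacuous (since $V$ is real-valued on $(0,\delta]$), so assume $V(0) \in \R$ and choose $c := V(0)$. The displayed identity then gives $h_{c}' \ge 0$, so $h_{c}$ is non-decreasing on $(0,\delta]$ and in particular bounded above by $h_{c}(\delta) < +\infty$. I would then show that $h_{c}(t) \to +\infty$ as $t \searrow 0$, which is the crux of the argument. If $g(0) > V(0)$, then the numerator of $h_{c}(t)$ tends to the positive constant $g(0)-V(0)$ while the denominator vanishes, forcing the blow-up. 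If instead $g(0) = V(0)$, then $h_{c}(t) = (g(t)-g(0))/t = g'(\xi_{t})$ for some $\xi_{t} \in (0,t)$ by the mean value theorem, and as $t \searrow 0$ we have $\xi_{t} \searrow 0$, so by~\eqref{7US-dc} we again obtain $g'(\xi_{t}) \to +\infty$. Either case contradicts the monotonicity, yielding some $t_{0} \in (0,\delta]$ with $V(t_{0}) > V(0)$. A final appeal to the continuity of $V$ on $(0,\delta]$ allows me to perturb $t_{0}$ into the open interval $(0,\delta)$, establishing~\eqref{jva7823}.

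For~\eqref{jva7823BIS}, having produced \eqref{jva7823}, I would set $M := \sup_{t \in (0,\delta]} V(t)$, which now satisfies $M > V(0)$. Since the hypothesis gives $\limsup_{t \searrow 0} V(t) = V(0)$, there exists $\eta \in (0,\delta)$ such that $V(t) \le (M+V(0))/2 < M$ for every $t \in (0,\eta)$. Therefore $M = \sup_{t \in [\eta,\delta]} V(t)$, and the continuity of $V$ on the compact set $[\eta,\delta]$ delivers the maximizer $t_{\star} \in [\eta,\delta] \subset (0,\delta]$.

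The main obstacle is the blow-up $h_{c}(t) \to +\infty$ in the critical case $g(0) = V(0)$: there is no direct gap to exploit, and one must genuinely invoke the divergence~\eqref{7US-dc} of $g'$ at the origin via the mean value theorem. Once this is in place, the remaining steps are routine continuity and compactness arguments.
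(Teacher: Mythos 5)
Your proof is correct, but it follows a genuinely different route from the paper's. The paper argues directly via a sliding-line construction: it considers the lines $R_r(t)=\frac{(g(\delta)-g(0))t}{\delta}+g(0)+r$, takes $r_\star$ to be the smallest $r\ge0$ with $R_r\ge g$ on $[0,\delta]$, proves $r_\star>0$ using the blow-up of the averaged difference quotient $\frac1t\int_0^t g'$ (the same mechanism as your mean-value-theorem step), and then reads off $V(t_0)=g(0)+r_\star>g(0)\ge V(0)$ at the interior first-touching point $t_0$, where the tangent to $g$ coincides with $R_{r_\star}$. You instead run a contradiction through the monotonicity identity $\frac{d}{dt}\big((g(t)-c)/t\big)=(c-V(t))/t^2$, which is an analytic repackaging of the same geometry (the quotient is the slope of the secant from $(0,c)$, and $V(t)$ is the intercept of the tangent at $t$). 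Each approach has a small advantage: the paper's is constructive and delivers the extra information $\sup V>g(0)$, whereas yours is shorter to set up; note that you could also obtain $V(t_0)>g(0)$ and dispense with your case split between $g(0)>V(0)$ and $g(0)=V(0)$ by taking $c:=g(0)$ rather than $c:=V(0)$, since \eqref{8903456wdf} makes that the stronger conclusion. Two cosmetic points in your treatment of \eqref{jva7823BIS}: the threshold $(M+V(0))/2$ is meaningless when $V(0)=-\infty$ (replace it by any real number below $M$), and you should note that $M<+\infty$, which follows from the limsup hypothesis near $0$ together with continuity of $V$ on $[\eta,\delta]$; neither affects the validity of the argument.
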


\begin{proof} For every~$r\ge0$ we consider the straight line
$$ R_r(t):= \frac{\big(g(\delta)-g(0)\big)\,t}{\delta}+g(0)+r.$$
We observe that, if~$r\ge 4\|g\|_{L^\infty([0,\delta])}+1$,
for all~$t\in[0,\delta]$,
$$ R_r(t)-g(t)\ge-
\frac{2\|g\|_{L^\infty([0,\delta])}\,t}{\delta}-\|g\|_{L^\infty([0,\delta])}+r-\|g\|_{L^\infty([0,\delta])}
\ge r-4\|g\|_{L^\infty([0,\delta])}\ge1.$$
Hence, we take~$r_\star\ge0$ to be the smallest~$r$
for which~$R_r(t)-g(t)\ge0$ for all~$t\in[0,\delta]$.

We claim that
\begin{equation}\label{rsya}
r_\star>0.
\end{equation}
Indeed, if not, we have that, for all~$r\ge0$ and all~$t\in(0,\delta]$,
\begin{eqnarray*}
0\le \frac{R_r(t)-g(t)}t=\frac{ g(\delta)-g(0) }{\delta}+ \frac{g(0)+r-g(t)}t,
\end{eqnarray*}
and thus
$$ 0\le\frac{ g(\delta)-g(0) }{\delta}+ \frac{g(0)-g(t)}t=
\frac{ g(\delta)-g(0) }{\delta}- \frac{1}t\int_0^t g'(\tau)\,d\tau
.$$
Recalling~\eqref{7US-dc}, we have that for any~$M>0$ there exists~$t_M>0$
such that, for every~$t\in(0,t_M)$, $g'(t)\ge M$ and consequently
$$ M\le\frac{1}{t_M}\int_0^{t_M} g'(\tau)\,d\tau\le \frac{ g(\delta)-g(0) }{\delta}.$$
This is a contradiction if~$M$ is taken sufficiently large, and hence
the proof of~\eqref{rsya} is complete.

Then, there exists~$t_0\in[0,\delta]$ such that~$R_{r_\star}(t_0)=g(t_0)$.
In addition, since, by~\eqref{rsya},
\begin{eqnarray*}&& R_{r_\star}(0)=g(0)+r_\star>g(0)\\
\quad{\mbox{and }}&&
R_{r_\star}(\delta)= \frac{\big(g(\delta)-g(0)\big)\,\delta}{\delta}+g(0)+r_\star
=g(\delta)+r_\star>g(\delta)
,\end{eqnarray*}
we have that~$t_0\not\in\{0,\delta\}$. 

We show now that $t_0$ satisfies~\eqref{jva7823}. For this,
using that~$R_{r_\star}(t_0)-g(t_0)=0\le R_{r_\star}(t)-g(t)$
for every~$t\in[0,\delta]$, it follows that $$
0=
R_{r_\star}'(t_0)-g'(t_0)=
\frac{g(\delta)-g(0)}{\delta}-g'(t_0).
$$
This and~\eqref{grty0}
say that
\begin{eqnarray*}&& V(t_0)= g(t_0)-t_0\,g'(t_0)=
g(t_0)-
\frac{\big(g(\delta)-g(0)\big)\,t_0}{\delta}\\&&\qquad
=g(t_0)-R_{r_\star}(t_0)+g(0)+r_\star=g(0)+r_\star>g(0).
\end{eqnarray*}
This, together with~\eqref{8903456wdf},
proves~\eqref{jva7823}, which in turn implies~\eqref{jva7823BIS}.
\end{proof}

Now, we can complete the proof of Theorem~\ref{QC}
in dimension~$n:=2$
(the case~$n:=1$ being already covered by Theorem~4.1
in~\cite{2019arXiv190405393D}), by arguing as follows.

\begin{proof}[Proof of Theorem~\ref{QC} when $n:=2$]
By~\eqref{ADX0} in Lemma~\ref{lemma33}, we can define, for all~$x_1\in[-1,1]$,
\begin{equation}\label{uias23} u(x_1,0):=\lim_{{(y_1,y_2)\to(x_1,0)}\atop{y_2>0}}u(y_1,y_2).\end{equation}
Notice that if~$u(-1,0)=u(1,0)=0$, then the desired
result follows from Corollary~\ref{STEP11}.
Hence, without loss of generality, we can assume that
\begin{equation}\label{Qnd}
(0,+\infty)\ni u(1,0)\ge |u(-1,0)|.
\end{equation}
We claim that such case cannot hold, by reaching a contradiction.
To this end, in view of~\eqref{Qnd} and~\cite{MR3532394}, we have that, in a small neighborhood of~$P=(P_1,P_2,P_3):=(1,0,u(1,0))$,
one can write
\begin{equation}\label{UNAJdf}
{\mbox{$(\partial E_u)\cap\{x_2>0\}$ as the graph of a $C^{1,\frac{1+s}2}$-function in the $e_2$-direction,}}
\end{equation} that is,
in the vicinity of the point~$P$, the set~$\{x_{3}=u(x)\}$
in~$\{x_2>0\}$
coincides with the set~$\{x_2=v(x_1,x_{3})\}$, for a suitable~$v\in
C^{1,\frac{1+s}2}(\R^2)$, with
\begin{equation}\label{x2x0}
{\mbox{$v(P_1,x_3)=0$
if~$x_3\le P_3$, and $v(P_1,x_3)\ge0$
if~$x_3\ge P_3$.}}\end{equation}
Hence, for~$x$ close to~$P$ with~$x_2>0$, we can write that
\begin{equation}\label{x2x1} x_2=v\big(x_1,u(x_1,x_2)\big).\end{equation}

Now, we let~$\delta_0\in(0,1]$ to be taken conveniently small in what
follows, we define
$${\mathcal{Q}}:=\Big\{x=(x_1,x_2)\in\R^2 {\mbox{ s.t. either $|x_1|=1$
and~$0<|x_2|\le\delta_0$,
or $|x_1|\leq1$
and~$|x_2|=\delta_0$}}\Big\},$$ and we
claim that there exists~$x^\star=(x^\star_1,x^\star_2)$
such that~$x^\star_2>0$ and
\begin{equation}\label{2347we}
\partial_1 u(x^\star)=\max_{{\mathcal{Q}}}\partial_1 u\in(0,+\infty).
\end{equation}
For this, we define
\begin{equation}\label{la-ggad}
g(t):=u(1,t)\qquad{\mbox{and}}\qquad V(t):=\partial_1 u(1,t).\end{equation}
We use~\eqref{x2x1} and the smoothness of~$u$ in~$\{x_2>0\}$ (see~\cite{MR3934589}), to see that, if~$x_2>0$ is sufficiently small,
$$ 0 =\partial_2\Big( x_2-v\big(1,u(1,x_2)\big)\Big)
= 1-\partial_3 v\big(1,u(1,x_2)\big)\,\partial_2 u(1,x_2).$$
In particular, we have that
\begin{eqnarray}
&&
\label{pap3}\partial_3 v\big(1,u(1,x_2)\big)\ne0,\\
\label{pap5a}&&\partial_2 u(1,x_2)\ne 0\\
\label{pap5}
{\mbox{and }}&& \partial_2 u(1,x_2)=
\frac{1}{ \partial_3 v\big(1,u(1,x_2)\big) }
\end{eqnarray}
if~$x_2>0$ is sufficiently small.

By~\eqref{x2x0} and~\eqref{pap3}, we conclude that,
if~$x_2>0$ is sufficiently small,
\begin{equation*}\partial_3 v\big(1,u(1,x_2)\big)>0.\end{equation*}
This, \eqref{x2x0} and~\eqref{pap5} give that
\begin{equation}\label{degeru7443y}
\lim_{x_2\searrow0} \partial_2 u(1,x_2)=\lim_{x_3\searrow P_3}
\frac{1}{ \partial_3 v\big(1,x_3\big) } =+\infty,\end{equation}
that is, in the notation of~\eqref{la-ggad},
\begin{equation}\label{LLxv23e0}
\lim_{t\searrow0}g'(t)=+\infty.
\end{equation}

Also, by \eqref{ADX6}, we have that~$ u(x)=\nabla u(x)\cdot x$, and consequently
\begin{equation} \label{alsousing}
u(1,x_2)=\partial_1u(1,x_2)+x_2\partial_2u(1,x_2).\end{equation}
Thus, recalling the notation in~\eqref{la-ggad}, if~$t>0$ is sufficiently small,
\begin{equation}\label{LLxv23e1}
V(t)-g(t)+tg'(t)=\partial_1 u(1,t)-u(1,t)+t\partial_2 u(1,t)
=0.
\end{equation}
Using~\eqref{degeru7443y}, \eqref{alsousing} and the notation of~\eqref{la-ggad}, we see that
$$ V(t)=\partial_1 u(1,t)= u(1,t)-t\partial_2 u(1,t)\le u(1,t)=g(t),
$$ 
and consequently
\begin{equation}\label{LLxv23e} \R\cup\{-\infty\}\ni \limsup_{t\searrow0}V(t)\le
g(0).\end{equation}
Now, in light of~\eqref{LLxv23e0}, \eqref{LLxv23e1} and~\eqref{LLxv23e},
we see that conditions~\eqref{7US-dc},
\eqref{grty0} and~\eqref{8903456wdf}
are satisfied in this setting. Consequently, we can exploit Lemma~\ref{87923-4656}
and deduce from~\eqref{jva7823BIS} that
there exists~$t_\star\in(0,\delta_0]$ such that, for all~$t\in[0,\delta_0]$,
\begin{equation}\label{365}
\partial_1u(1,t)= V(t)\le V(t_\star)=\partial_1u(1,t_\star)=\max_{{\mathcal{Q}}\cap\{x_2>0\}} \partial_1u.
\end{equation}
Now we claim that
\begin{equation}\label{o3ra3s2i234}
\max_{{\mathcal{Q}}\cap\{x_2>0\}} \partial_1u>0.
\end{equation}
Indeed, if not, we have that~$\partial_1u(1,t)\le 0$,
as long as~$t\in(0,\delta_0)$. In particular, if~$\tau\in(0,\delta_0^2)$
and~$\sigma\in(\sqrt{\tau},1)$, we have that~$\frac\tau\sigma\le \sqrt\tau<\delta_0$
and, as a result, exploiting~\eqref{ADX6},
\begin{eqnarray*}&& u(1,\tau)-u(\sqrt{\tau},\tau)=
\int_{ \sqrt{\tau} }^1 \partial_1 u(\sigma,\tau)\,d\sigma=
\int_{ \sqrt{\tau} }^1 \partial_1 u\left(1,\frac\tau\sigma\right)\,d\sigma\le0.\end{eqnarray*}
Consequently, taking the limit as~$\tau\searrow0$ and recalling~\eqref{ADX3}
and~\eqref{uias23},
we conclude that~$u(1,0)\le0$. This is in contradiction with~\eqref{Qnd}
and thus this completes the proof of~\eqref{o3ra3s2i234}.

Since~$\partial_1 u=0$ in~$\{x_2<0\}$, from~\eqref{365}
and~\eqref{o3ra3s2i234},
we have that 
$$ (0,+\infty)\ni\partial_1u(1,t_\star)=\max_{{\mathcal{Q}}} \partial_1u,$$
and hence~\eqref{2347we} follows directly .

This, together with~\eqref{ADX}, allows us to exploit Lemma~\ref{HAusjc293ierf},
used here with~$\Omega:=\{x\in(0,2)\times(0,+\infty)
{\mbox{ s.t. }}\frac{x_2}{x_1}<\delta_0\}$, ${\mathcal{B}}:=B_1(-2,-2)$ and~$x^\star:=
(1,t_\star)$, and this
yields that~$u$ cannot be $s$-minimal in~$\Omega\subset\{x_2>0\}$.
This
is a contradiction with our assumptions, and hence 
the setting in~\eqref{Qnd} cannot occur.

As a result, we have that~$u(x_1,0)=0$ for all~$x_1\in\R$.
Then, in view of Corollary~\ref{STEP11},
we conclude that~$u$ vanishes identically, and this completes
the proof of Theorem~\ref{QC}.
\end{proof}

It is interesting to point out that, as a byproduct of Theorem~\ref{QC},
one also obtains the following alternatives on the second blow-up:

\begin{corollary}\label{COR:ALTEzz}
Let~$u$ be an $s$-minimal graph in~$(-2,2)\times(0,4)$.
Assume that there exists~$h>0$
such that~$u=0$ in~$ (-2,2)\times(-h,0)$.

Let~$E_{00}$ be the second blow-up of~$E_u$.
Then:
\begin{eqnarray}\label{POS-E001-BI1}
&& {\mbox{either $E_{00}\cap\{x_2>0\}=\varnothing$,}}
\\ \label{POS-E001-BI2}&& {\mbox{or $E_{00}\cap\{x_2>0\}=\{x_2>0\}$,}}\\&&
\label{POS-E001-BI3}{\mbox{or $E_{00}=\{x_{3}<0\}$.}}
\end{eqnarray}
\end{corollary}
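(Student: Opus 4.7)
The plan is to invoke the trichotomy already proved in Lemma~\ref{BOAmdfiUAMP} (specialized to $n=2$) and then to show that only the graph case can produce something other than the first two alternatives of the corollary, where furthermore Theorem~\ref{QC} forces this graph to be trivial. More precisely, Lemma~\ref{BOAmdfiUAMP} applied to the present setting yields that $E_{00}$ satisfies one of three mutually exclusive conditions: $E_{00}\cap\{x_2>0\}=\varnothing$, or $E_{00}\cap\{x_2>0\}=\{x_2>0\}$, or $E_{00}=E_{u_{00}}$ for some $u_{00}:\R^2\to\R$ with $\lim_{x\to 0}u_{00}(x)=0$. The first two cases are precisely \eqref{POS-E001-BI1} and \eqref{POS-E001-BI2}, so all the work reduces to analyzing the third case and showing that it gives \eqref{POS-E001-BI3}.

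In the graphical case, I would verify that $u_{00}$ satisfies the three hypotheses of Theorem~\ref{QC}. First, by \eqref{ADX-EE-568-029-3984112} we have $E_{00}\cap\{x_2<0\}=\{x_3<0\}\cap\{x_2<0\}$, which, when $E_{00}=E_{u_{00}}$, forces $u_{00}(x_1,x_2)=0$ whenever $x_2<0$; this is \eqref{ADX}. Second, the dilation invariance \eqref{ADX-EE-568-029-3984112-NIA}, namely $tE_{00}=E_{00}$ for all $t>0$, translates on the subgraph $E_{u_{00}}$ into the positive $1$-homogeneity $u_{00}(tx)=tu_{00}(x)$, which is \eqref{ADX6}. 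Third, the conclusion \eqref{POS-E004} of Lemma~\ref{BOAmdfiUAMP} gives precisely \eqref{ADX3}. Finally, $u_{00}$ is an $s$-minimal graph on $\R\times(0,+\infty)$ because $E_{00}$ is locally $s$-minimal in $\R^2\times(0,+\infty)$, which is the standard property of second blow-ups from Lemma~2.3 of~\cite{2019arXiv190405393D}.

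With all hypotheses in place, Theorem~\ref{QC} applies and yields $u_{00}\equiv 0$ on $\R^2$. Hence $E_{00}=E_{u_{00}}=\{x_3<0\}$, which is exactly \eqref{POS-E001-BI3}. This completes the proof.

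Since every piece of machinery has already been established in the paper, there is no genuine technical obstacle in this corollary; the only care needed is the straightforward bookkeeping that the graphical alternative supplied by Lemma~\ref{BOAmdfiUAMP} feeds exactly into the hypotheses of Theorem~\ref{QC}. The substantive content, of course, was in Theorem~\ref{QC} itself, whose delicate proof occupies the bulk of Section~\ref{15kdttd3}.
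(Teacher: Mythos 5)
Your proposal is correct and follows essentially the same route as the paper: reduce to the graphical alternative of Lemma~\ref{BOAmdfiUAMP}, check that $u_{00}$ satisfies the hypotheses of Theorem~\ref{QC} (flat exterior datum, $1$-homogeneity from the cone property of the second blow-up, and continuity at the origin from~\eqref{POS-E004}), and conclude that $u_{00}$ vanishes identically, so that $E_{00}=\{x_3<0\}$. Your explicit verification of the minimality and of the exterior-datum hypothesis is slightly more detailed bookkeeping than the paper's, but the argument is the same.
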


\begin{proof} We assume that neither~\eqref{POS-E001-BI1}
nor~\eqref{POS-E001-BI2} hold true, and we prove that~\eqref{POS-E001-BI3}
is satisfied. For this, we first exploit Lemma~\ref{BOAmdfiUAMP},
deducing from~\eqref{POS-E003} and~\eqref{POS-E004}
that~$E_{00}$ has a graphical
structure, with respect to some function~$u_{00}$, satisfying
\begin{equation}\label{ADX662738495jdjfgjg-2}
\lim_{x\to0}u_{00}(x)=0.
\end{equation}
Moreover, we know that~$E_{00}$ is a homogeneous set
(see e.g. Lemma 2.2 in~\cite{2019arXiv190405393D}),
and thus
\begin{equation}\label{ADX662738495jdjfgjg}
u_{00}(tx)=tu_{00}(x)\qquad{\mbox{ for all $x\in\R^2$ and~$t>0$.}}
\end{equation}
In view of~\eqref{ADX662738495jdjfgjg-2}
and~\eqref{ADX662738495jdjfgjg},
we are in the position of applying Theorem~\ref{QC}
to the function~$u_{00}$, and thus we conclude that~$u_{00}$
vanishes identically, and this establishes~\eqref{POS-E001-BI3}.
\end{proof}

We also observe that Corollary~\ref{COR:ALTEzz}
can be further refined in light of Lemma~\ref{554327h:AJSJD2374859ty}.

\begin{corollary}\label{COR:ALTE}
Let~$u$ be an $s$-minimal graph in~$(-2,2)\times(0,4)$.
Assume that there exists~$h>0$
such that~$u=0$ in~$ (-2,2)\times(-h,0)$.

Let~$E_{00}$ be the second blow-up of~$E_u$.
Then:
\begin{eqnarray}\label{POS-E001-BI1b}
&& {\mbox{either $E_{00}\cap\{x_2>0\}=\varnothing$
and }}\lim_{x_2\searrow0} u(0,x_2)<0,
\\ \label{POS-E001-BI2b}&& {\mbox{or $E_{00}\cap\{x_2>0\}=\{x_2>0\}$
and }}
\lim_{x_2\searrow0} u(0,x_2)>0,
\\&&
\label{POS-E001-BI3b}{\mbox{or $E_{00}=\{x_{3}<0\}$.}}
\end{eqnarray}
\end{corollary}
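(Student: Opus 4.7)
The plan is to bootstrap Corollary~\ref{COR:ALTEzz} by pinning down the exact shape of~$E_{00}$ on the lower half-space~$\{x_2<0\}$ and then invoking Lemma~\ref{554327h:AJSJD2374859ty} case by case. First I would record the trichotomy already provided by Corollary~\ref{COR:ALTEzz}:
\begin{eqnarray*}
&&{\mbox{(i) $E_{00}\cap\{x_2>0\}=\varnothing$,}}\\
&&{\mbox{(ii) $E_{00}\cap\{x_2>0\}=\{x_2>0\}$,}}\\
&&{\mbox{(iii) $E_{00}=\{x_3<0\}$.}}
\end{eqnarray*}
Case (iii) is already exactly~\eqref{POS-E001-BI3b}, so there is nothing further to do there.

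Next, observe that from the hypothesis~$u=0$ in~$(-2,2)\times(-h,0)$ together with the Hausdorff convergence of the rescalings defining~$E_{00}$ (compare~\eqref{ADX-EE-568-029-3984112} in the proof of Lemma~\ref{BOAmdfiUAMP}), one has
$$
E_{00}\cap\{x_2<0\}=\{x_3<0\}\cap\{x_2<0\}=\R\times(-\infty,0)\times(-\infty,0).
$$
Combining this identity with (i) and (ii), and remembering that~$\{x_2=0\}$ has zero measure (so it does not affect the identification of~$E_{00}$ as a locally $s$-minimal set), I would conclude:
\begin{itemize}
\item In case (i), $E_{00}=\R\times(-\infty,0)\times(-\infty,0)$.
\item In case (ii), $E_{00}=\big(\R\times(-\infty,0)\times(-\infty,0)\big)\cup\big(\R\times(0,+\infty)\times\R\big)$.
\end{itemize}

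With these explicit descriptions at hand, Lemma~\ref{554327h:AJSJD2374859ty} applies verbatim: its statement~\eqref{FAG:1} gives~$\lim_{x_2\searrow0}u(0,x_2)<0$ in case (i), establishing~\eqref{POS-E001-BI1b}, while~\eqref{FAG:2} gives~$\lim_{x_2\searrow0}u(0,x_2)>0$ in case (ii), establishing~\eqref{POS-E001-BI2b}. No further analysis is needed.

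There is no real obstacle here; the only point to double-check is that the blow-up sequence preserves the flat lower part of~$E_u$, but this has already been verified in the proof of Lemma~\ref{BOAmdfiUAMP} and is a consequence of the local Hausdorff convergence together with~\eqref{568-029-3984112}. Thus the corollary follows by a short case analysis combining the previous two results.
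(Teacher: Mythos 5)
Your proposal is correct and follows essentially the same route as the paper: case (iii) of Corollary~\ref{COR:ALTEzz} is already \eqref{POS-E001-BI3b}, while in cases (i) and (ii) the identity $E_{00}\cap\{x_2<0\}=\{x_3<0\}\cap\{x_2<0\}$ (established in the proof of Lemma~\ref{BOAmdfiUAMP}) pins down $E_{00}$ exactly as required by \eqref{FAG:1} and \eqref{FAG:2} of Lemma~\ref{554327h:AJSJD2374859ty}. The paper's proof is terser but identical in substance; your explicit identification of $E_{00}$ on the lower half-space is a detail the paper leaves implicit.
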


\begin{proof} In the setting of Corollary~\ref{COR:ALTEzz},
if~\eqref{POS-E001-BI1} holds true,
then we exploit~\eqref{FAG:1} and~\eqref{POS-E001-BI1b}
readily follows.

Similarly, combining~\eqref{POS-E001-BI2} with~\eqref{FAG:2},
we obtain~\eqref{POS-E001-BI2b}.

Finally, the situation in~\eqref{POS-E001-BI3} coincides with that in~\eqref{POS-E001-BI3b},
thus exhausting all the available possibilities.
\end{proof}

\section{Useful barriers}\label{HDB}

In this section we construct an auxiliary barrier,
that we will exploit in the proof of Theorem~\ref{NLMS}
to rule out the case of boundary Lipschitz
singularities. For this, we will rely on a special function introduced
in Lemma~7.1 of~\cite{2019arXiv190405393D} and on a codimension-one
auxiliary construction (given the possible use
of such barriers in other context, we give our construction
in a general dimension~$n$, but we will then restrict to the case~$n=2$
when dealing with the proof of our main theorems,
see Open Problem~\ref{17}).

These barriers rely on a purely nonlocal feature,
since they present a corner at the origin,
which maintains a significant influence on
the nonlocal mean curvature in a full neighborhood
(differently from the classical case,
in which the mean curvature is a local operator).

To perform our construction, we recall the definition of the nonlocal mean
curvature in~\eqref{jfgnjbj96768769}
and we first show that flat higher dimensional
extensions preserve the nonlocal mean curvature, up to a multiplicative
constant:

\begin{lemma}\label{5444542e62734}
Let~$G\subset\R^2$ and~$(a,b)\in\partial G$. Let
$$G^\star:=\{(x_1,\dots,x_n,x_{n+1})\in\R^{n+1}
{\mbox{ s.t. }} (x_n,x_{n+1})\in G\}.$$
Then, for every~$p=(p_1,\dots,p_n,p_{n+1})\in\R^{n+1}$ with~$(p_n,p_{n+1})=(a,b)$,
$$ {\mathcal{H}}^s_{G^\star}(p)=C(n,s)\,{\mathcal{H}}^s_{G}(a,b),$$
for a suitable constant~$C(n,s)>0$.
\end{lemma}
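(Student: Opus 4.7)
The plan is to compute $\mathcal{H}^s_{G^\star}(p)$ directly by Fubini, reducing the ambient $(n{+}1)$-dimensional integral in \eqref{jfgnjbj96768769} to the planar one by integrating out the extra $n-1$ variables. Since $G^\star$ is the cylinder over $G$ in the directions $x_1,\dots,x_{n-1}$, the characteristic function $\chi_{\mathbb{R}^{n+1}\setminus G^\star}(y)-\chi_{G^\star}(y)$ depends only on the last two coordinates $(y_n,y_{n+1})$, so one can pull it outside an inner integral over $y':=(y_1,\dots,y_{n-1})$.

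More concretely, write $p=(p',a,b)$ with $p'\in\mathbb{R}^{n-1}$ and $y=(y',y_n,y_{n+1})$. First I would substitute $y'\mapsto y'+p'$ to get
\[
\int_{\mathbb{R}^{n-1}}\frac{dy'}{|p-y|^{n+1+s}}
=\int_{\mathbb{R}^{n-1}}\frac{dy'}{\bigl(|y'|^2+|(a,b)-(y_n,y_{n+1})|^2\bigr)^{(n+1+s)/2}}.
\]
Then I would rescale $y'=r\,z$ with $r:=|(a,b)-(y_n,y_{n+1})|$ to obtain
\[
\int_{\mathbb{R}^{n-1}}\frac{dy'}{|p-y|^{n+1+s}}=\frac{C(n,s)}{|(a,b)-(y_n,y_{n+1})|^{2+s}},\qquad
C(n,s):=\int_{\mathbb{R}^{n-1}}\frac{dz}{(1+|z|^2)^{(n+1+s)/2}}.
\]
The constant $C(n,s)$ is finite because $(n+1+s)-(n-1)=2+s>1$, and obviously strictly positive. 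Substituting back and recognising the planar nonlocal mean curvature yields
\[
\mathcal{H}^s_{G^\star}(p)=C(n,s)\int_{\mathbb{R}^2}\frac{\chi_{\mathbb{R}^2\setminus G}(y_n,y_{n+1})-\chi_{G}(y_n,y_{n+1})}{|(a,b)-(y_n,y_{n+1})|^{2+s}}\,d(y_n,y_{n+1})=C(n,s)\,\mathcal{H}^s_{G}(a,b).
\]

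There is no real obstacle; the only point that requires any care is justifying Fubini, which is legitimate because both sides are understood in the principal value sense (the singularity on the diagonal is handled by taking the usual symmetric truncation, on which Fubini applies to the absolutely integrable truncated integrand, and the resulting planar integral coincides with the principal value defining $\mathcal{H}^s_G(a,b)$ since $(a,b)\in\partial G$). In particular, the identity above holds regardless of which point $p'\in\mathbb{R}^{n-1}$ is chosen, which is consistent with the translation invariance of $G^\star$ in the first $n-1$ variables.
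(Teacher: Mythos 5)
Your argument is correct and is essentially identical to the paper's proof: the paper also exploits that the integrand's sign factor depends only on $(y_n,y_{n+1})$ and performs the translation-and-rescaling change of variable $\zeta':=(y'-p')/\sqrt{|y_n-a|^2+|y_{n+1}-b|^2}$, arriving at the same constant $C(n,s)=\int_{\R^{n-1}}\big(1+|\zeta'|^2\big)^{-\frac{n+1+s}{2}}\,d\zeta'$. Your extra remark on justifying Fubini via the truncated (principal value) integrals is a harmless refinement of the same computation.
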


\begin{proof} Using the notation~$y'=(y_1,\dots,y_{n-1})$
and the change of variable
$$ \zeta':=\frac{y'-p'}{\sqrt{|y_n-p_n|^2+|y_{n+1}-p_{n+1}|^2}},$$
we have that
\begin{eqnarray*}
{\mathcal{H}}^s_{G^\star}(p)&=&\int_{\R^{n+1}}\frac{\chi_{\R^{n+1}\setminus G^\star}(y)-
\chi_{G^\star}(y)
}{|y-p|^{n+1+s}}\,dy\\&=&
\int_{\R^{n+1}}\frac{\chi_{\R^2\setminus G}(y_n,y_{n+1})-
\chi_{G}(y_n,y_{n+1})
}{\big(|y'-p'|^2 +|y_n-p_n|^2+|y_{n+1}-p_{n+1}|^2\big)^{\frac{n+1+s}2}}\,dy\\&=&
\int_{\R^2}\left[
\int_{\R^{n-1}}\frac{\chi_{\R^2\setminus G}(y_n,y_{n+1})-
\chi_{G}(y_n,y_{n+1})
}{\big(1+|\zeta'|^2\big)^{\frac{n+1+s}2}\,
\big(|y_n-a|^2+|y_{n+1}-b|^2\big)^{\frac{2+s}2}}\,d\zeta'\right]\,d(y_n,y_{n+1})\\
&=& \int_{\R^{n-1}}\frac{d\zeta'}{\big(1+|\zeta'|^2\big)^{\frac{n+1+s}2}}\;{\mathcal{H}}^s_{G}(a,b),
\end{eqnarray*}
which gives the desired result.
\end{proof}

While Lemma~\ref{5444542e62734} deals with a flat higher dimensional
extension of a set, we now turn our attention to the case
in which a higher dimensional extension is obtained by a
given function~$\Phi\in C^{1,\vartheta}(\R^{n-1})$,
with~$\vartheta\in(s,1]$. In this setting, using the notation~$x'=(x_1,\dots,x_{n-1})$,
given~$L>0$ and a function~$\beta:(-L,L)\to\R$, for every~$\varpi\ge0$
we define
$$ E^{(\beta,\varpi)}:= \big\{
x=(x_1,\dots,x_n,x_{n+1})\in\R^{n+1}{\mbox{ s.t. }}x_{n+1}<\beta(x_n)-\varpi\,\Phi(x')
\big\}.$$
We also extend~$\beta$ to take value equal to~$-\infty$ outside~$(-L,L)$.
Then, recalling the framework in~\eqref{LEYUDEF},
we can estimate the nonlocal mean curvature
of~$E^{(\beta,\varpi)}$ with that of~$E_\beta$ as follows:

\begin{lemma}\label{VGbjatrtta9}
Let~$\gamma>0$.
If~$p=(p_1,\dots,p_n,p_{n+1})\in\partial E^{(\beta,\varpi)}$
with~$p_n\in(-L,L)$,
$\beta\in C^{1,1}\big([p_n-\gamma,p_n+\gamma]\big)$,
and~$C(n,s)>0$ is as in Lemma~\ref{5444542e62734}, we have that
$$ \big| {\mathcal{H}}^s_{E^{(\beta,\varpi)}}(p)-C(n,s)\,
{\mathcal{H}}^s_{E_\beta}(p_n,p_{n+1})\big|\le C\,\varpi\,(1+\varpi),$$
for a suitable constant~$C>0$ depending only on~$n$, $s$, $L$,
$\gamma$, $\Phi$
and~$\beta$.
\end{lemma}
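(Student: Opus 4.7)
The plan is to compare the nonlocal mean curvature $\mathcal{H}^s_{E^{(\beta,\varpi)}}(p)$ with $\mathcal{H}^s_{E^{(\beta,0)}}(p)$, observing that the unperturbed set $E^{(\beta,0)}$ is precisely the cylindrical extension $G^\star$ (in the sense of Lemma~\ref{5444542e62734}) of $G:=E_\beta\subset\R^2$. To make this identification clean at the given point $p$, I would first carry out two reductions. A vertical translation replaces $\beta$ by $\tilde\beta:=\beta-\varpi\Phi(p')$ and $\Phi$ by $\tilde\Phi:=\Phi-\Phi(p')$: this leaves the set $E^{(\beta,\varpi)}$ invariant, turns $E_\beta$ into a vertical translate of itself (only relabeling boundary points), and ensures $\tilde\Phi(p')=0$ and $p_{n+1}=\tilde\beta(p_n)$. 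A further horizontal translation in the $y'$-direction lets me assume $p'=0$. Once this is done, Lemma~\ref{5444542e62734} immediately yields
\begin{equation*}
\mathcal{H}^s_{E^{(\tilde\beta,0)}}(p)\;=\;C(n,s)\,\mathcal{H}^s_{E_{\tilde\beta}}(p_n,p_{n+1}),
\end{equation*}
and the problem is reduced to bounding $|\mathcal{H}^s_{E^{(\tilde\beta,\varpi)}}(p)-\mathcal{H}^s_{E^{(\tilde\beta,0)}}(p)|$ by $C\varpi(1+\varpi)$.

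Using $\chi_{A^c}-\chi_A=1-2\chi_A$, this perturbation takes the form
\begin{equation*}
\mathcal{H}^s_{E^{(\tilde\beta,\varpi)}}(p)-\mathcal{H}^s_{E^{(\tilde\beta,0)}}(p)
\;=\;2\int_{\R^{n+1}}\frac{\chi_{E^{(\tilde\beta,0)}}(y)-\chi_{E^{(\tilde\beta,\varpi)}}(y)}{|y-p|^{n+1+s}}\,dy.
\end{equation*}
A direct inspection shows that, for each $(y',y_n)$, the symmetric difference is exactly the $y_{n+1}$-interval between $\tilde\beta(y_n)$ and $\tilde\beta(y_n)-\varpi\tilde\Phi(y')$, carrying sign $\operatorname{sgn}(\tilde\Phi(y'))$. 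After Fubini in $y_{n+1}$ the right-hand side becomes
\begin{equation*}
2\varpi\int\tilde\Phi(y')\,\overline{K}_\varpi(y',y_n)\,dy'\,dy_n,
\end{equation*}
where $\overline{K}_\varpi$ is the average of $|y-p|^{-(n+1+s)}$ over that interval. Using $\beta\in C^{1,1}([p_n-\gamma,p_n+\gamma])$ and the mean value theorem, $\overline{K}_\varpi$ differs from the symmetric approximation
\begin{equation*}
K_0(y',y_n):=\bigl(|y'|^2+(y_n-p_n)^2+(\tilde\beta(y_n)-\tilde\beta(p_n))^2\bigr)^{-(n+1+s)/2}
\end{equation*}
by an amount whose contribution after integration is at most $C\varpi^2$.

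The remaining main term $\varpi\int\tilde\Phi(y')K_0(y',y_n)\,dy'\,dy_n$ is handled by Taylor expanding $\tilde\Phi(y')=\nabla\tilde\Phi(0)\cdot y'+R(y')$, with $|R(y')|\le C|y'|^{1+\vartheta}$. Since $K_0$ is even in $y'$, the linear term vanishes on each $y'$-symmetric slice in the principal-value sense; the Hölder remainder is bounded by $C\varpi\int|y'|^{1+\vartheta}K_0(y',y_n)\,dy'\,dy_n$, and the scaling substitution $y'=|y_n-p_n|u$ shows that this two-fold integral converges precisely because of the strict inequality $\vartheta>s$, which produces integrability of $|y_n-p_n|^{\vartheta-s-1}$ near $y_n=p_n$. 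The complementary ``far-field'' contributions from $|y_n-p_n|>\gamma$, from $y_n\notin(-L,L)$, and from $|y'|$ large, are all controlled by crude $L^1$ estimates using the bound $\le\varpi\|\tilde\Phi\|_\infty$ on the $y_{n+1}$-thickness of the symmetric difference and the integrability of the kernel away from $p$. Assembling these pieces yields the required $C\varpi(1+\varpi)$ bound. The principal obstacle is the simultaneous principal-value cancellation of the linear term and the integrability of the Hölder remainder near $y'=0$, both of which rest crucially on the hypothesis $\vartheta>s$; the rest of the argument is careful bookkeeping of perturbative kernel estimates.
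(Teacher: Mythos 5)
Your route is genuinely different from the paper's: you work directly with the characteristic--function definition of ${\mathcal{H}}^s$, identify the symmetric difference of $E^{(\tilde\beta,\varpi)}$ and $E^{(\tilde\beta,0)}$ as a vertical slab of signed thickness $\varpi\tilde\Phi(y')$ over each $(y',y_n)$, and integrate it out by Fubini; the paper instead writes $2{\mathcal{H}}^s$ through the symmetrized graph formula with the function $F$, views the integrand as a function $\Psi(\varpi)$, and bounds $\Psi'(\varpi)$ pointwise before integrating back in $\varpi$. Your treatment of the main term $\varpi\int\tilde\Phi(y')K_0$ --- odd-part cancellation in $y'$ plus the H\"older remainder, with convergence governed by $s<\vartheta$ near the singularity and $\vartheta<1+s$ at infinity --- is correct and plays the same role as the paper's bound $|\Psi'(\varpi)|\le C(1+\varpi)\min\{1,|y|^\vartheta\}$.

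There is, however, a genuine gap where you discard $\overline K_\varpi-K_0$ as an error ``whose contribution after integration is at most $C\varpi^2$''. Set $\rho:=\big(|y'|^2+(y_n-p_n)^2\big)^{1/2}$. The best pointwise bound the mean value theorem gives is $|\overline K_\varpi(y',y_n)-K_0(y',y_n)|\le C\,\varpi\,|\tilde\Phi(y')|\,\rho^{-(n+2+s)}$, since the slab has thickness $\varpi|\tilde\Phi(y')|$ and the $y_{n+1}$-derivative of the kernel along it is only controlled by $\rho^{-(n+2+s)}$. Multiplying by the outer factor $\varpi|\tilde\Phi(y')|\le C\varpi|y'|$ yields an integrand of size $\varpi^2|y'|^2\rho^{-(n+2+s)}\le\varpi^2\rho^{-(n+s)}$, and $\int_{B}\rho^{-(n+s)}\,d(y',y_n)=+\infty$ over any ball of $\R^n$ centered at $(0,p_n)$. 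So this error term is not absolutely integrable: the antipodal cancellation you invoke for $K_0$ is indispensable for $\overline K_\varpi$ as well, and since $\overline K_\varpi$ is not even in $y'$ (it depends on $\tilde\Phi(y')$ through the slab), carrying it out forces you to estimate second differences of both $\beta$ (here the $C^{1,1}$ hypothesis enters) and $\Phi$ inside the averaged kernel --- which is exactly what the paper's symmetrize-first, then differentiate-in-$\varpi$ computation does in one stroke. A second, minor point: in the far field you cannot appeal to $\|\tilde\Phi\|_{L^\infty}$, which is infinite for the $\Phi$ used later ($\Phi(x_1)=|x_1|^{\frac{3+s}2}$); you need the growth bound $|\tilde\Phi(y')|\le C(1+|y'|^{1+\vartheta})$ combined with the fact that the symmetric difference is entirely contained in $\{y_n\in(-L,L)\}$ (because $\beta\equiv-\infty$ outside $(-L,L)$), and convergence of the resulting integral again uses $\vartheta<1+s$.
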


\begin{proof} We use the notation~$x=(x_1,\dots,x_n)$
and~$\beta_\varpi(x):=\beta(x_n)-\varpi\,\Phi(x')$.
We recall~\eqref{g6778} to write that
\begin{equation}\label{923461d52s99tgv7}
\begin{split}
2{\mathcal{H}}^s_{E^{(\beta,\varpi)}}(p)\,=\,&
2{\mathcal{H}}^s_{E_{\beta_\varpi}}(p)
\\ =\,&
\int_{\R^n} \left[
F\left( \frac{\beta_\varpi(p+y)-\beta_\varpi(p)}{|y|}\right)+
F\left( \frac{\beta_\varpi(p-y)-\beta_\varpi(p)}{|y|}\right)\right]
\,
\frac{dy}{|y|^{n+s}}\\
=\,&
\int_{\R^n} \left[
F\left( \frac{\beta(p_n+y_n)-\beta(p_n)}{|y|}-\frac{\varpi \big(\Phi(p'+y')
-\Phi(p')\big)}{|y|}\right)\right.\\
&\qquad+\left.
F\left( \frac{\beta(p_n-y_n)-\beta(p_n)}{|y|}-\frac{\varpi \big(\Phi(p'-y')
-\Phi(p')\big)}{|y|}\right)\right]
\,
\frac{dy}{|y|^{n+s}}\,.
\end{split}\end{equation}
Now we define
\begin{eqnarray*} \Psi(\varpi)&:=&
F\left( \frac{\beta(p_n+y_n)-\beta(p_n)}{|y|}-\frac{\varpi \big(\Phi(p'+y')
-\Phi(p')\big)}{|y|}\right)
\\&&\qquad+F\left( \frac{\beta(p_n-y_n)-\beta(p_n)}{|y|}-\frac{\varpi 
\big(\Phi(p'-y')
-\Phi(p')\big)}{|y|}\right).\end{eqnarray*}
We remark that
\begin{eqnarray*} |\Psi'(\varpi)|&=&\Bigg|
F'\left( \frac{\beta(p_n+y_n)-\beta(p_n)}{|y|}-\frac{\varpi \big(\Phi(p'+y')
-\Phi(p')\big)}{|y|}\right)
\frac{\Phi(p'+y')
-\Phi(p')}{|y|}
\\&&\qquad+F'\left( \frac{\beta(p_n-y_n)-\beta(p_n)}{|y|}-\frac{\varpi
\big(\Phi(p'-y')
-\Phi(p')\big)}{|y|}\right)
\frac{\Phi(p'-y')
-\Phi(p')}{|y|}\Bigg|
\\&\le&
\left|F'\left( \frac{\beta(p_n+y_n)-\beta(p_n)}{|y|}-\frac{\varpi \big(\Phi(p'+y')
-\Phi(p')\big)}{|y|}\right)\right|\\
&&\quad\times\left|
\frac{\Phi(p'+y')
-\Phi(p')}{|y|}+\frac{\Phi(p'-y')
-\Phi(p')}{|y|}\right|\\
&&+
\left|F'\left( \frac{\beta(p_n+y_n)-\beta(p_n)}{|y|}-\frac{\varpi \big(\Phi(p'+y')
-\Phi(p')\big)}{|y|}\right)\right.\\
&&\quad\left.-F'\left( \frac{\beta(p_n+y_n)-\beta(p_n)}{|y|}-\frac{\varpi \big(\Phi(p'-y')
-\Phi(p')\big)}{|y|}\right)\right|\;\left|
\frac{\Phi(p'-y')
-\Phi(p')}{|y|}\right|
.
\end{eqnarray*}
We also use that~$F'$ is even to observe that
\begin{eqnarray*}&&
\Bigg|
F'\left( \frac{\beta(p_n+y_n)-\beta(p_n)}{|y|}-\frac{\varpi
\big(\Phi(p'+y')
-\Phi(p')\big)}{|y|}\right)\\
&&\qquad-F'\left( \frac{\beta(p_n-y_n)-\beta(p_n)}{|y|}-\frac{\varpi \big(\Phi(p'-y')
-\Phi(p')\big)}{|y|}\right)
\Bigg|\\
&=&
\Bigg|
F'\left( \frac{\beta(p_n+y_n)-\beta(p_n)}{|y|}-\frac{\varpi
\big(\Phi(p'+y')
-\Phi(p')\big)}{|y|}\right)\\
&&\qquad-F'\left( \frac{\beta(p_n)-\beta(p_n-y_n)}{|y|}+\frac{\varpi
\big(\Phi(p'-y')
-\Phi(p')\big)}{|y|}\right)
\Bigg|\\
&\le& \min\Bigg\{2\,\| F'\|_{L^\infty(\R)},\\&&\quad
\| F''\|_{L^\infty(\R)}\left(
\frac{|\beta(p_n+y_n)+\beta(p_n-y_n)-2\beta(p_n)|}{|y|}+\varpi\,
\frac{\big|\Phi(p'+y')+\Phi(p'-y')
-2\Phi(p')\big|}{|y|}
\right)
\Bigg\}\\
&\le& C\,(1+\varpi)\,\min\{ 1,|y|^\vartheta\},
\end{eqnarray*}
for some~$C>0$.

Moreover, since~$F'(-\infty)=0$,
\begin{eqnarray*}&&
\Bigg|
F'\left( \frac{\beta(p_n+y_n)-\beta(p_n)}{|y|}-\frac{\varpi \big(\Phi(p'+y')
-\Phi(p')\big)}{|y|}\right)
\Bigg|\\
&=&
\Bigg|
F'\left( \frac{\beta(p_n+y_n)-\beta(p_n)}{|y|}-\frac{\varpi \big(\Phi(p'+y')
-\Phi(p')\big)}{|y|}\right)
\Bigg|\;\chi_{(-p_n-L,-p_n+L)}(y_n)
\\&\le& C\,\chi_{(-2L,2L)}(y_n),
\end{eqnarray*}
up to renaming~$C>0$. 

{F}rom these observations, we deduce that
$$ |\Psi'(\varpi)|\le
C\,(1+\varpi)\,\left(
\min\{ 1,|y|^\vartheta\}+
\frac{|y'|^{1+\vartheta}\chi_{(-2L,2L)}(y_n)}{|y|}
\right),$$
up to renaming~$C>0$.

Consequently, recalling~\eqref{923461d52s99tgv7},
and writing~$E^{(\beta,0)}$ to denote~$E^{(\beta,\varpi)}$ with~$\varpi:=0$,
we find that
\begin{eqnarray*}&&
\big| {\mathcal{H}}^s_{E^{(\beta,\varpi)}}(p)-{\mathcal{H}}^s_{E^{(\beta,0)}}(p)\big|\\
&=&\frac12\,\left|\int_{\R^n}\Big(
\Psi(\varpi)-\Psi(0)\Big)
\,\frac{dy}{|y|^{n+s}}\right|\\
&\le& C\,\varpi\,(1+\varpi)\,
\int_{\R^n}
\left(
\min\{ 1,|y|^\vartheta\}+
\frac{|y'|^{1+\vartheta}\chi_{(-2L,2L)}(y_n)}{|y|}
\right)
\frac{dy}{|y|^{n+s}}\\
&\le& C\,\varpi\,(1+\varpi)\,\left(1+
\int_{\R^n}
|y'|^{1+\vartheta}\chi_{(-2L,2L)}(y_n)\,
\frac{dy}{|y|^{n+1+s}}\right)\\
&\le& C\,\varpi\,(1+\varpi)\,\left(1+
\int_{{\R^n}\atop{\{|y'|>4L\}}}
|y'|^{1+\vartheta}\chi_{(-2L,2L)}(y_n)\,
\frac{dy}{|y|^{n+1+s}}\right)\\
&\le& C\,\varpi\,(1+\varpi)\,\left(1+
\int_{{\R^{n-1}}\atop{\{|y'|>4L\}}}
\frac{dy'}{|y'|^{n-\vartheta+s}}\right)\\&\le& C\,\varpi\,(1+\varpi),
\end{eqnarray*}
up to renaming~$C$ line after line.

The desired result now plainly follows
from the latter inequality and the fact that, in view of
Lemma~\ref{5444542e62734}, we know that~${\mathcal{H}}^s_{E^{(\beta,0)}}(p)=
C(n,s)\,
{\mathcal{H}}^s_{E_\beta}(p_n,p_{n+1})$.
\end{proof}

In the light of Lemma~\ref{VGbjatrtta9}, we can now construct
the following useful barrier:

\begin{lemma}\label{RCAIcgaue023}
Let~$\tilde\ell>0$ and~$\bar\ell\in[-\tilde\ell,\tilde\ell]$. Let~$\lambda>0$ and~$L\in(\lambda,+\infty)$. Let~$
a$, $b$, $c>0$, $\alpha\in(0,s)$.

Let~$\e\in(0,1)$ and assume that
\begin{equation}\label{43266-78j1}
L\ge\frac{c}{\e^{1/s}}.
\end{equation}
Let
$$ \beta(x_n):=\begin{cases}
\bar\ell x_n & {\mbox{ if }}x_n\in(-L,0),\\
\bar\ell x_n+\e a x_n& {\mbox{ if }}x_n\in[0,\lambda],\\
\bar\ell x_n-\e bx_n^{1+\alpha} & {\mbox{ if }}x_n\in(\lambda,L).
\end{cases}$$
Then there exist
\begin{equation}\label{43266-78j2}
\mu\in \left(0,\frac\lambda8\right)\end{equation}
depending only on~$n$,
$s$,
$\tilde\ell$, $\lambda$, $a$, $b$, $c$ and~$\alpha$
(but independent of~$\e$ and~$L$), 
and~$C_\star>0$,
depending only on~$n$,
$s$,
$\tilde\ell$, $\lambda$, $a$, $b$, $c$ and~$\alpha$,
such that, if
\begin{equation}\label{43266-78j3}
\varpi\in \left[0,\min\left\{1,\,\frac{C_\star\,\e}{\mu^s}\right\}\right],\end{equation}
then
\begin{equation} \label{LAOS923i828dgtwqr}
{\mathcal{H}}^s_{E^{(\beta,\varpi)}}(p)\le
-\frac{C^\star\,\e}{\mu^s}
<0\end{equation}
for every~$p=(p_1,\dots,p_n,p_{n+1})\in\partial E^{(\beta,\varpi)}$ with~$p_n\in(0,\mu)$,
where~$C^\star>0$ depends only on~$n$,
$s$,
$\bar\ell$, $\lambda$, $a$, $b$, $c$ and~$\alpha$.

Moreover, if~$E^{(\beta,\varpi,R)}:=E^{(\beta,\varpi)}\cap \{ |x'|<R\}$ and
\begin{equation} \label{LAOS923i828dgtwqr-2}
R\ge \frac{C_\sharp\,\mu}{\e^{\frac1s}}
,\end{equation}
for a suitable~$C_\sharp>0$ depending only on~$n$,
$s$,
$\tilde\ell$, $\lambda$, $a$, $b$, $c$ and~$\alpha$,
then
\begin{equation} \label{LAOS923i828dgtwqr-3}
{\mathcal{H}}^s_{E^{(\beta,\varpi,R)}}(p)<0\end{equation}
for every~$p=(p_1,\dots,p_n,p_{n+1})\in\partial E^{(\beta,\varpi,R)}$ with~$p_n\in(0,\mu)$
and~$|p'|<R/2$.
\end{lemma}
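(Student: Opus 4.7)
The plan is to reduce the estimate to a one-dimensional problem using Lemma~\ref{VGbjatrtta9}: for any $p=(p_1,\dots,p_n,p_{n+1})\in\partial E^{(\beta,\varpi)}$ with $p_n\in(0,\mu)$ and $\mu\le\lambda/8$, the function $\beta$ is $C^{1,1}$ on $[p_n-\mu, p_n+\mu]$ (it is linear there), so Lemma~\ref{VGbjatrtta9} yields
$$\mathcal{H}^s_{E^{(\beta,\varpi)}}(p)\le C(n,s)\,\mathcal{H}^s_{E_\beta}\bigl(p_n,\beta(p_n)\bigr) + C_0\,\varpi(1+\varpi),$$
with $C_0$ depending only on $n,s,\tilde\ell,\lambda,a,b,c,\alpha$. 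The heart of the proof is therefore to establish
$$\mathcal{H}^s_{E_\beta}\bigl(p_n,\beta(p_n)\bigr)\le -c_0\,\frac{\e}{\mu^s}\qquad\text{for all }p_n\in(0,\mu),$$
for some $c_0>0$.

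To obtain this one-dimensional bound, I rewrite $\mathcal{H}^s_{E_\beta}$ via the identity (cf.~\eqref{g6778} with $n=1$)
$$2\,\mathcal{H}^s_{E_\beta}\bigl(p_n,\beta(p_n)\bigr)=\int_\R F\!\left(\frac{\beta(p_n+y)-\beta(p_n)}{|y|}\right)\frac{dy}{|y|^{1+s}},$$
and compare $\beta$ with the baseline affine function $\beta_{\rm lin}(t):=\bar\ell t$, whose subgraph is a half-plane and hence has vanishing nonlocal mean curvature by symmetry, uniformly in $\bar\ell\in[-\tilde\ell,\tilde\ell]$. The difference $\beta-\beta_{\rm lin}$ equals $\e a\, t$ on $[0,\lambda]$, which produces a wedge-shaped excess of the subgraph $E_\beta$ along the positive $x_n$-axis of opening proportional to $\e a$. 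This is precisely the geometric configuration handled by Lemma~7.1 of \cite{2019arXiv190405393D}, the ``special function'' the authors allude to in the preamble of this section; applying it (and using the uniformity in $\bar\ell$ of the baseline-zero contribution) extracts the leading corner contribution
$$-c_1\,\frac{\e}{p_n^s}\le -c_1\,\frac{\e}{\mu^s},$$
with $c_1>0$ depending only on $a,\tilde\ell,s$.

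It remains to check that the remaining contributions in the integral are dominated by this corner term for $\mu$ small. The downward perturbation $-\e b\,x_n^{1+\alpha}$ on $(\lambda,L)$, any possible jump of the piecewise definition at $x_n=\lambda$, and the truncation of $\beta$ to $-\infty$ outside $(-L,L)$ all occur at distance $\ge\lambda/2>4\mu$ from $p_n$. A direct estimate using that the perturbation is of size $\e$ and $|y|\ge \lambda/2$ gives a contribution bounded by $C_1\e$; the truncation at $|y|>L$ contributes at most $C_2/L^s\le C_2\e/c^s$ thanks to \eqref{43266-78j1}; and the potential jump at $\lambda$ yields at most $C_3\,\e/\lambda^s$. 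All these are bounded by $C\e$ with a constant independent of $\mu$, so choosing $\mu\in(0,\lambda/8)$ small enough (depending only on the parameters $n,s,\tilde\ell,\lambda,a,b,c,\alpha$) makes $c_1\e/\mu^s$ beat them by a definite factor. We obtain $\mathcal{H}^s_{E_\beta}\le -c_0\e/\mu^s$, and then imposing $\varpi\le C_\star \e/\mu^s$ with $C_\star$ small enough that $C_0\varpi(1+\varpi)\le \tfrac12 C(n,s)\,c_0\,\e/\mu^s$ delivers \eqref{LAOS923i828dgtwqr} with $C^\star:=\tfrac12 C(n,s)\,c_0$.

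For the truncated statement \eqref{LAOS923i828dgtwqr-3}, since $E^{(\beta,\varpi,R)}\subseteq E^{(\beta,\varpi)}$,
$$\mathcal{H}^s_{E^{(\beta,\varpi,R)}}(p)-\mathcal{H}^s_{E^{(\beta,\varpi)}}(p)=2\int_{\{|y'|>R\}\cap E^{(\beta,\varpi)}}\frac{dy}{|y-p|^{n+1+s}};$$
for $|p'|<R/2$ we have $|y'-p'|>R/2$, and integrating first in $(y_n,y_{n+1})$ and then in $y'$ on $\{|y'|>R/2\}$ produces the bound $C_4/R^s$. Choosing $C_\sharp:=(2C_4/C^\star)^{1/s}$ ensures that $R\ge C_\sharp\mu/\e^{1/s}$ implies $C_4/R^s\le C^\star\e/(2\mu^s)$, and combined with \eqref{LAOS923i828dgtwqr} this yields \eqref{LAOS923i828dgtwqr-3}. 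The main obstacle in the whole argument is the precise extraction, with the correct linear-in-$\e$ scaling and with constants uniform in $\bar\ell$, of the corner contribution via Lemma~7.1 of \cite{2019arXiv190405393D}; everything else is a careful organization of the contributions from the piecewise construction.
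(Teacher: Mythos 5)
Your proposal follows essentially the same route as the paper's proof: the paper likewise transfers the estimate to the one-dimensional set $E_\beta$ via Lemma~\ref{VGbjatrtta9}, invokes Lemma~7.1 of \cite{2019arXiv190405393D} (there cited directly for the full bound ${\mathcal{H}}^s_{E_\beta}\le -C'(\tilde\ell,a)\,\e/\mu^s$, whereas you re-derive the far-field, jump and truncation contributions by hand before absorbing them for small $\mu$), then uses \eqref{43266-78j3} to absorb the $\varpi$-error, and treats the truncated set exactly as you do, via a $C/R^s$ tail estimate combined with \eqref{LAOS923i828dgtwqr-2}. One small caveat: $\beta$ is not linear (nor $C^{1,1}$) on $[p_n-\mu,p_n+\mu]$, since the corner at $0$ lies inside this interval when $p_n<\mu$, so Lemma~\ref{VGbjatrtta9} should be applied with a window contained in $(0,\lambda)$; this matches how the paper itself uses that lemma, and the corner is precisely the feature that Lemma~7.1 of \cite{2019arXiv190405393D} exploits, so the slip does not affect the substance of your argument.
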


\begin{proof} Firstly, we prove~\eqref{LAOS923i828dgtwqr}.
By Lemma~7.1 in~\cite{2019arXiv190405393D},
we know that, under assumptions~\eqref{43266-78j1} and~\eqref{43266-78j2},
if~$\mu$ is sufficiently small we have that
$$ {\mathcal{H}}^s_{E_\beta}(p_n,p_{n+1})\le -\frac{C'(\tilde\ell,a)\,\e}{\mu^s},
$$
for some~$C'(\tilde\ell,a)>0$.

{F}rom this and Lemma~\ref{VGbjatrtta9}, we deduce that
\begin{eqnarray*}
{\mathcal{H}}^s_{E^{(\beta,\varpi)}}(p)&\le& C(n,s)\,
{\mathcal{H}}^s_{E_\beta}(p_n,p_{n+1})+ C\,\varpi\,(1+\varpi)\\&
\le& -\frac{C(n,s)\,C'(\tilde\ell,a)\,\e}{\mu^s}+ C\,\varpi\,(1+\varpi).\end{eqnarray*}
{F}rom this and~\eqref{43266-78j3} we obtain the desired result
in~\eqref{LAOS923i828dgtwqr}.

Furthermore, the sets~$E^{(\beta,\varpi,R)}$ and~$E^{(\beta,\varpi)}$
coincide in~$\{|x'|<R\}$ and therefore, if additionally~$|p'|\le R/2$,
\begin{eqnarray*} &&{\mathcal{H}}^s_{E^{(\beta,\varpi,R)}}(p)
\le
{\mathcal{H}}^s_{E^{(\beta,\varpi)}}(p)+\int_{\R^{n+1}\cap \{|y'|>R\}
}\frac{dy}{|y-p|^{n+1+s}}
\\&&\qquad \le
{\mathcal{H}}^s_{E^{(\beta,\varpi)}}(p)+\int_{\R^{n+1}\setminus B_{R/2}
}\frac{dz}{|z|^{n+1+s}}
\le -\frac{C^\star\,\e}{\mu^s}+\frac{C}{R^s}\le -\frac{C^\star\,\e}{2\mu^s}
,\end{eqnarray*}
thanks to~\eqref{LAOS923i828dgtwqr}
and~\eqref{LAOS923i828dgtwqr-2}, and this proves~\eqref{LAOS923i828dgtwqr-3}.
\end{proof}

\section{Proof of Theorem~\ref{NLMS}}\label{657699767395535}

The proof of Theorem~\ref{NLMS}
consists in combining Theorem~\ref{QC} (or, more specifically,
Corollary~\ref{COR:ALTE})
with the 
boundary Harnack Inequality and the boundary
improvement of flatness methods
introduced in~\cite{2019arXiv190405393D}.
More specifically, in view of the boundary Harnack Inequality
in~\cite{2019arXiv190405393D},
one can
rephrase Lemma~6.2 of~\cite{2019arXiv190405393D}
in our setting
and obtain the following 
convergence result of the vertical rescalings
to a linearized equation:

\begin{lemma}\label{NOANpierjfppp34}
Let~$\e\in(0,1)$, $\alpha\in(0,s)$ and~$u:\R^2\to\R$. Set
$$ u_\e(x):=\frac{u(x)}\e.$$
There exists~$c_0\in(0,1)$,
depending only on~$\alpha$ and~$s$, such that the following
statement holds true. Assume that~$u$
is an $s$-minimal graph in~$(-2^{\tilde k_0},2^{\tilde k_0})
\times(0,2^{\tilde k_0})$, with
$$ \tilde k_0:=\left\lceil\frac{|\log\e|}{c_0}\right\rceil.$$
Suppose also that
$$
|u(x)|\le\e^{\frac1{c_0}}\,|x|^{1+\alpha}
\qquad{\mbox{for all $x=(x_1,x_2)\in B_{ 2^{\tilde k_0} }$
with~$x_2<0$}}$$
and
$$
|u(x)|\le\e\,(2^k)^{1+\alpha}\qquad{\mbox{for all $x=(x_1,x_2)\in B_{ 2^{k} }$
with~$x_2>0$, for all $k\in\{0,\dots,\tilde k_0\}$.}}
$$
Then, as~$\e\searrow0$, up to a subsequence,
$u_\e$ converges locally uniformly in~$\R^2$ to a function~$\bar u$ satisfying
\begin{eqnarray*}&&
\sup_{x\in\R^2}\frac{|\bar u(x)|}{1+|x|^{1+\alpha}}<+\infty\\
{\mbox{and }}&&(-\Delta)^{\frac{1+s}2}\bar u=0\;\,
{\mbox{ in }}\;\,\R \times(0,+\infty).
\end{eqnarray*}
Furthermore, if~$|x|\to0$ with~$x_2>0$, we can write that
$$ \bar u(x)=\bar a \,x_2^{\frac{1+s}2}+O(|x|^{\frac{3+s}2}),$$
for some~$\bar a\in\R$.
\end{lemma}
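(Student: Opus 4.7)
The statement is the natural two--dimensional analogue of Lemma~6.2 in~\cite{2019arXiv190405393D}, whose proof carries over with only cosmetic modifications once one has the one--sided boundary Harnack and boundary Hölder theory developed in the same reference. I would structure the argument in three stages: compactness of the rescalings, passage to the linear limit equation, and derivation of the boundary expansion.

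For the first stage, I would exploit the two dyadic growth hypotheses directly. After dividing by~$\e$, they read $|u_\e(x)|\le (2^k)^{1+\alpha}$ on~$B_{2^k}\cap\{x_2>0\}$ and $|u_\e(x)|\le \e^{1/c_0-1}|x|^{1+\alpha}$ on~$B_{2^{\tilde k_0}}\cap\{x_2<0\}$. The second bound forces the limit to vanish in~$\{x_2<0\}$ and tends to zero on any fixed compact set, while the first yields uniform local bounds on $u_\e$ with the required polynomial growth. Together with the interior smooth regularity of~\cite{MR3934589} and the boundary Hölder / boundary Harnack estimates from~\cite{2019arXiv190405393D} applied on progressively larger dyadic annuli (which is why $\tilde k_0\approx |\log\e|/c_0$ is imposed), one obtains equicontinuity of the family $\{u_\e\}$ on every compact subset of~$\R^2$. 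An Arzelà--Ascoli diagonal argument then delivers a subsequence converging locally uniformly to some~$\bar u$ with the stated growth.

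For the second stage, the key point is that $u_\e$ satisfies the nonlinear graphical nonlocal mean curvature equation~\eqref{g6778} at every interior point of $\{x_2>0\}$. Rewriting $F$ as $F(t)=t+O(t^3)$ and factoring out~$\e$, the equation for~$u_\e$ becomes
$$
\int_{\R^2}\frac{u_\e(x+y)-u_\e(x)}{|y|^{3+s}}\,dy \,+\, \e^2\,R_\e(x)\;=\;0,
$$
where the remainder $R_\e(x)$ is cubic in the finite differences of~$u_\e$ divided by~$|y|$. The dyadic growth bound on~$u_\e$ is exactly what is needed to dominate the $y$-integral uniformly in~$\e$, so dominated convergence and the locally uniform convergence of~$u_\e\to \bar u$ give, in the limit, the linear equation $(-\Delta)^{(1+s)/2}\bar u=0$ in $\R\times(0,+\infty)$ (the order~$(1+s)/2$ appears through the formal scaling that relates the fractional mean curvature kernel $|y|^{-(n+1+s)}$ in codimension one to the fractional Laplacian of the graph function). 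The exterior vanishing $\bar u\equiv 0$ in $\{x_2<0\}$ is inherited from the negative-side growth hypothesis.

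For the third stage, I would invoke the boundary regularity theory for the fractional Laplacian on the half-space (as used already in the proof of Lemma~\ref{fan}, see e.g.~\cite{MR3694738}): the quotient $\bar u(x',x_2)/x_2^{(1+s)/2}$ extends to a function in $C^\infty\big(\overline{\R\times(0,+\infty)}\cap B_r\big)$ for small~$r$. Its Taylor expansion at the origin, combined with the growth control on~$\bar u$, yields $\bar u(x)=\bar a\,x_2^{(1+s)/2}+O(|x|^{(3+s)/2})$ for~$|x|\to 0$ with~$x_2>0$. The main obstacle, as in~\cite{2019arXiv190405393D}, is the careful bookkeeping in the second stage: one must verify that the nonlinear tail of the fractional mean curvature equation does not spoil the linearization, and this is precisely where the somewhat technical conditions $L\ge c/\e^{1/s}$ and $\tilde k_0\sim |\log\e|/c_0$ enter, ensuring that the sequence of equations is valid on balls whose radius grows fast enough to accommodate the tail contributions when passing to the limit.
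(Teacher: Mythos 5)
Your outline matches the paper's approach: the paper does not reprove this lemma but obtains it by transposing Lemma~6.2 of~\cite{2019arXiv190405393D} (compactness of the vertical rescalings via the dyadic growth bounds and the boundary Harnack/H\"older theory, linearization of the graphical fractional mean curvature equation~\eqref{g6778} to $(-\Delta)^{\frac{1+s}2}$, and the boundary expansion from fractional boundary regularity), which is exactly the three-stage scheme you describe. The only slip is attributing the condition $L\ge c/\e^{1/s}$ to this lemma --- that hypothesis belongs to the barrier construction of Lemma~\ref{RCAIcgaue023}, not here --- but this does not affect the argument.
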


With this, one can obtain a suitable
improvement of flatness result as follows:

\begin{theorem}\label{ijsd78AIsjjd}
Let~$\e$, $h\in(0,1)$, $\alpha\in(0,s)$, $u:\R^2\to\R$
be such that
\begin{equation}\label{65676589cs0an3o}
{\mbox{$u(x)=0$ for all~$x\in (-2^{\tilde k_0},2^{\tilde k_0})\times(-h,0)$,}}\end{equation}
and assume that~$u$ 
is an $s$-minimal graph in~$(-2^{\tilde k_0},2^{\tilde k_0})
\times(0,2^{\tilde k_0})$, with
\begin{equation}\label{LAmbielsz} \tilde k_0:=\left\lceil\frac{|\log\e|}{c_0}\right\rceil,\end{equation}
and with~$c_0\in(0,1)$ suitably small.

Suppose that
\begin{equation}\label{ESAy} \lim_{{x\to0}\atop{x_2>0}} u(x)=0.\end{equation}
Then, there exists~$\tilde\e_0\in(0,1)$ such that if~$\e\in(0,\tilde\e_0]$
the following statement holds true.

If
\begin{eqnarray}\label{LAPwkeryuifmma}&&
|u(x)|\le\e^{\frac1{c_0}}\,|x|^{1+\alpha}\quad{\mbox{for all $x\in B_{ 2^{\tilde k_0} }$
with~$x_2<0$}}\end{eqnarray}
and
\begin{eqnarray}
\label{LAPwkeryuifmma2}&&
|u(x)|\le\e\,(2^k)^{1+\alpha}\quad{\mbox{for all $x\in B_{ 2^{k} }$
with~$x_2>0$, for all $k\in\{0,\dots,\tilde k_0\}$,}}
\end{eqnarray}
then, for all~$j\in\N$,
$$ |u(x)|\le \frac\e{2^{j(1+\alpha)}}\qquad{\mbox{ for all $x\in
B_{1/2^j}$ with~$x_2>0$}}.$$
Moreover,
$$ |u(x)|\le4\e|x|^{1+\alpha}
\qquad{\mbox{ for all $x\in
B_{1/2}$ with~$x_2>0$}}.$$
\end{theorem}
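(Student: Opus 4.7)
The plan is to establish the dyadic decay by induction on $j$, where the inductive step consists of a compactness-based \emph{one-step improvement of flatness}: under the hypotheses, for sufficiently small $\varepsilon$, one has $\sup_{B_{1/2}\cap\{x_2>0\}} |u| \leq 2^{-(1+\alpha)}\varepsilon$. The base case $j=0$ is immediate from \eqref{LAPwkeryuifmma2}, and the dyadic iteration is performed through the rescaling $v(x) := 2^{j(1+\alpha)} u(x/2^j)$, which (modulo standard bookkeeping of the constants $\tilde k_0$ and the flat strip below the origin, accommodated by choosing $c_0$ small) inherits all hypotheses of the theorem at level zero from the inductive hypothesis at level $j$. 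Once the one-step improvement is proven, the final bound $|u(x)| \leq 4\varepsilon |x|^{1+\alpha}$ on $B_{1/2}\cap\{x_2>0\}$ follows by interpolating across dyadic annuli.

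To prove the one-step improvement, I would argue by contradiction and compactness. Assume there exist sequences $\varepsilon_k \searrow 0$ and $s$-minimal graphs $u_k$ satisfying all the hypotheses but violating the conclusion at scale $1/2$. Applying Lemma~\ref{NOANpierjfppp34} to the $u_k$, one extracts a subsequence such that $u_k/\varepsilon_k$ converges locally uniformly to a limit $\bar u$ which has controlled growth of order $1+\alpha$ at infinity, satisfies $(-\Delta)^{(1+s)/2}\bar u = 0$ in $\mathbb{R}\times(0,+\infty)$ with vanishing datum in the lower half plane, and admits the expansion
\begin{equation*}
\bar u(x) \;=\; \bar a\, x_2^{(1+s)/2} + O\!\bigl(|x|^{(3+s)/2}\bigr) \qquad \text{as } x \to 0,\ x_2 > 0,
\end{equation*}
for some $\bar a \in \mathbb{R}$. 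Observe that $(3+s)/2 > 1+\alpha$ since $\alpha < s < (1+s)/2$. Consequently, if one can show that $\bar a = 0$, the remainder term yields $|\bar u(x)| \leq C\,|x|^{(3+s)/2}$ near the origin, and at scale $1/2$ this bound is strictly less than $2^{-(1+\alpha)}$, directly contradicting the standing assumption that the improvement fails.

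The crux of the argument, and what I expect to be the main obstacle, is therefore to kill the leading linear mode by showing $\bar a = 0$. This is where the nonlinear rigidity of the paper enters decisively through Corollary~\ref{COR:ALTE}. Indeed, assumption \eqref{ESAy} guarantees that $\lim_{x_2\searrow 0} u_k(0,x_2) = 0$ for each $k$, which rules out the alternatives \eqref{POS-E001-BI1b} and \eqref{POS-E001-BI2b} (each of which would force a nonzero one-sided limit). Hence Corollary~\ref{COR:ALTE} forces the second blow-up of each $E_{u_k}$ to be exactly the flat half-space $\{x_3<0\}$, which means that the geometric dyadic rescalings of the subgraphs converge to a horizontal hyperplane. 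The technical point to reconcile is that the vertical rescaling $u_k/\varepsilon_k$ in Lemma~\ref{NOANpierjfppp34} and the geometric rescaling of $E_{u_k}$ underlying the second blow-up are a priori different objects, so one must check that a nontrivial coefficient $\bar a \neq 0$ in the linearization would obstruct the convergence of the geometric blow-up to $\{x_3<0\}$: in essence, because $x_2^{(1+s)/2}$ vanishes strictly more slowly than linearly at the boundary, a nonzero $\bar a$ would produce a non-horizontal limit shape at the origin, which is incompatible with Corollary~\ref{COR:ALTE}.

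Concluding, this forces $\bar a = 0$, which, combined with the expansion and the strict inequality $(3+s)/2 > 1+\alpha$, closes the contradiction, yields the one-step improvement, and thereby completes the induction. The delicate part is the rigorous reconciliation between the two rescaling procedures in the step that identifies $\bar a = 0$; the rest is standard improvement-of-flatness bookkeeping together with the technical verification that the hypotheses on $\tilde k_0$ and the lower flat strip are propagated correctly by the rescaling $v$.
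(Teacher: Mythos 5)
Your skeleton is the right one (compactness, linearization via Lemma~\ref{NOANpierjfppp34}, reduction to showing $\bar a=0$, and the use of Corollary~\ref{COR:ALTE} as the rigidity input), and it matches the paper's strategy up to the decisive step. But the step you yourself flag as ``the technical point to reconcile'' is not a point of bookkeeping: it is the main new content of the proof, and your heuristic for it does not close. The expansion $u\approx\e\,\bar a\,x_2^{\frac{1+s}2}$ coming from Lemma~\ref{NOANpierjfppp34} is only the estimate~\eqref{78hs:93e}, i.e. $|u(x)-\e\,\bar a\,x_2^{\frac{1+s}2}|\le \overline{C}\e\,(|x|^{\frac{3+s}2}+\delta)$ at \emph{unit scale}, with an additive error $\overline{C}\e\delta$ that does not vanish as you zoom in. Under the geometric rescaling $x\mapsto kx$ defining the second blow-up, the putative leading term scales like $\e\,\bar a\,k^{\frac{1-s}2}x_2^{\frac{1+s}2}$ while the error scales like $k\e\delta$, which dominates; so the linearization by itself gives no information whatsoever about $E_{00}$, and the claim ``a nonzero $\bar a$ would produce a non-horizontal limit shape, incompatible with Corollary~\ref{COR:ALTE}'' cannot be extracted from~\eqref{78hs:93e} alone. (As a side remark, even the heuristic conclusion should be that $E_{00}\cap\{x_2>0\}=\{x_2>0\}$, i.e. alternative~\eqref{POS-E001-BI2b} and a boundary jump, not a tilted plane.)

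What the paper does to bridge exactly this gap is the whole of Section~\ref{HDB}: it builds the barrier $E^{(\beta,\varpi,R)}$ of Lemma~\ref{RCAIcgaue023}, whose profile is \emph{linear} in $x_2$ near the boundary (slope $\bar\ell+\e a$ with $a=\bar a/8$) and has strictly negative nonlocal mean curvature in the strip $\{0<x_2<\mu\}$, and then slides it from below until it touches the graph of $u$. The maximum principle forces the barrier to stay below $u$, yielding the \emph{hard, scale-invariant} lower bound $u(x)\ge \e\delta x_2+\tfrac{\e\bar a}{8}x_2-\varpi\Phi(x_1)$; since $\Phi(x_1)=|x_1|^{\frac{3+s}2}$ is superlinear, this linear-in-$x_2$ bound survives the blow-up and shows $\{x_3<(\e\delta+\tfrac{\e\bar a}{8})x_2\}\cap\{x_2>0\}\subseteq E_{00}$, which rules out~\eqref{POS-E001-BI1b} and~\eqref{POS-E001-BI3b} and hence forces~\eqref{POS-E001-BI2b}, contradicting~\eqref{ESAy}. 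Without this barrier-and-sliding device (or an equivalent one converting the soft linearization into a pointwise linear bound), your proof of $\bar a=0$ does not go through; the rest of your outline (the dyadic induction, the rescaling $v(x)=2^{j(1+\alpha)}u(x/2^j)$, and the final interpolation across annuli) is standard and consistent with the scheme the paper imports from Theorem~8.1 of the cited reference.
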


\begin{proof} The proof of Theorem~8.1 of~\cite{2019arXiv190405393D}
carries over to this case, with the exception of the
proof of~(8.9) in~\cite{2019arXiv190405393D}
(which in turn uses the one-dimensional
barrier built in Lemma~7.1
of~\cite{2019arXiv190405393D}, that is
not available in the higher dimensional case
that we deal with here).
More precisely, using Lemma~\ref{NOANpierjfppp34},
we obtain that, given~$\delta\in(0,1)$,
if~$\e$ is sufficiently small, for all~$x\in B_3$ with~$x_2>0$,
we have that
\begin{equation}\label{78hs:93e}
|u(x)-\e\,\bar a\,x_2^{\frac{1+s}2}|\le \overline{C}\e\,\big(|x|^{\frac{3+s}2}+\delta\big),\end{equation}
for some~$\overline{C}>0$.

With this, which replaces formula~(8.8)
of~\cite{2019arXiv190405393D} in this setting,
the proof of Theorem~8.1 of~\cite{2019arXiv190405393D}
can be applied to our framework, once we show that
\begin{equation}\label{RUsnsca}
\bar a=0.\end{equation}
To check that~\eqref{RUsnsca} is satisfied, and thus complete
the proof of Theorem~\ref{ijsd78AIsjjd},
we argue for a contradiction and assume, for instance, that~$
\bar a>0$ (the case~$\bar a<0$ being similar).
We take~$E^{(\beta,\varpi,R)}$ as in Lemma~\ref{RCAIcgaue023},
with~$n:=2$,
\begin{eqnarray*}&& 
\Phi(x_1):=|x_1|^{\frac{3+s}{2}},\qquad
\bar\ell:=\e\delta,\qquad
\lambda:=\min\left\{\frac1{10^{\frac{2}{1-s}}},\frac{\bar a}{4\,\overline{C}}\right\},\\&&
c_0:=\min\left\{ s\log 2,\frac{1-\log2}{2},\frac{s}{2(s+1+\alpha)}\right\},\qquad\varpi:=\frac{C_\star\,\e}{\mu^s},\qquad
L:=\frac{1}{4\,\e^{\frac{\log 2}{c_0}}},
\\&&
a:=\frac{\bar a}{8},\qquad b:=2^{2(2+\alpha)}+\frac{1}{\lambda^\alpha}+
\frac{2^{4(2+\alpha)}}{\lambda^{1+\alpha}},
\qquad c:=\frac{1}{4}\qquad{\mbox{ and }}\qquad
R:=\frac{C_\sharp\,\mu}{\e^{\frac1s}},
\end{eqnarray*}
and we slide it from below till it touches the graph of~$u$
(by choosing conveniently the free parameters such that~$\e\ll \delta\ll\mu\ll1$).
As a matter of fact, we have that~\eqref{43266-78j1},
\eqref{43266-78j3}
and~\eqref{LAOS923i828dgtwqr-2} are satisfied.
Consequently, we are in the position of using Lemma~\ref{RCAIcgaue023}
and deduce from~\eqref{LAOS923i828dgtwqr-3} that
\begin{equation}\label{MPLE1}
\begin{split}
{\mbox{if~$E^{(\beta,\varpi,R)}\setminus{\mathcal{D}}$
is below the graph of~$u$,}}\\
{\mbox{then~$E^{(\beta,\varpi,R)}$
is below the graph of~$u$.}}
\end{split}
\end{equation}
where~${\mathcal{D}}:=\{|x_1|<R/2\}\cap\{x_2\in(0,\mu)\}$.

Now we claim that
\begin{equation}\label{MPLE2}
{\mbox{$E^{(\beta,\varpi,R)}\setminus{\mathcal{D}}$
lies below the graph of~$u$.}}
\end{equation}
Indeed, if~$x_2\le-L$, or~$x_2\ge L$, or~$|x_1|\ge R$, then the result
is obvious. Hence, to prove~\eqref{MPLE2}, we can focus
on the region~$\{|x_1|<R\}\times\{|x_2|<L\}$.

If~$x_2\in(-L,0)$, we notice that
\begin{equation}\label{Kytilsq} 2^{\tilde k_0}\ge 2^{\frac{|\log\e|}{c_0}-1}=2L,
\end{equation}
thanks to~\eqref{LAmbielsz}. As a consequence, $|x|\le|x_1|+|x_2|<
R+L<2L\le 2^{\tilde k_0}$, and thus
we can
exploit~\eqref{LAPwkeryuifmma} and find that
\begin{equation*}
|u(x)|\le \e^{\frac1{c_0}}|x|^{1+\alpha}.
\end{equation*}
Combining this with~\eqref{65676589cs0an3o}, and noticing
that~$|x_1|<R=\frac{C_\sharp\,\mu}{\e^{\frac1s}}<2^{\tilde k_0}$,
we see that
$$ |u(x)|\le\e^{\frac1{c_0}}|x|^{1+\alpha}\chi_{(-\infty,h)}(x_2),$$
and, as a result,
\begin{equation}\label{9423331236d65}
\begin{split}
&\beta(x_2)-\varpi\,\Phi(x_1)-u(x)\le
\e\delta x_2-\varpi\,\Phi(x_1)+\e^{\frac1{c_0}}|x|^{1+\alpha}\chi_{(-\infty,h)}(x_2)\\
&\qquad\le- \e\delta |x_2|-\varpi\,\Phi(x_1)+
2^{\frac{1+\alpha}2}\e^{\frac1{c_0}}\big(|x_1|^{1+\alpha}\chi_{(-\infty,h)}(x_2)+|x_2|^{1+\alpha}\big)\\
&\qquad\le \e |x_2|\big( 2^{\frac{1+\alpha}2}L^\alpha\e^{\frac{1-c_0}{c_0}}-\delta\big)
-\varpi\,\Phi(x_1)+
2^{\frac{1+\alpha}2}\e^{\frac1{c_0}} |x_1|^{1+\alpha}\chi_{(-\infty,h)}(x_2)\\
&\qquad= \e |x_2|\left( 2^{\frac{1+\alpha}2}\left(
\frac{1}{4\e^{\frac{\log2}{c_0}}}\right)^\alpha\e^{\frac{1-c_0}{c_0}}-\delta\right)
-\varpi\,\Phi(x_1)+
2^{\frac{1+\alpha}2}\e^{\frac1{c_0}} |x_1|^{1+\alpha}\chi_{(-\infty,h)}(x_2)
\\&\qquad\leq-\frac{\delta\e}2 |x_2|-\varpi\,\Phi(x_1)+
2^{\frac{1+\alpha}2}\e^{\frac1{c_0}} |x_1|^{1+\alpha}\chi_{(-\infty,h)}(x_2).
\end{split}\end{equation}
Now we consider two regimes: when~$|x_2|\le h$, we deduce from~\eqref{9423331236d65}
that
\begin{equation}\label{9423331236d65:2}
\begin{split}
&\beta(x_2)-\varpi\,\Phi(x_1)-u(x)\le 
-\frac{\delta\e}2 |x_2|<0.
\end{split}\end{equation}
If instead~$|x_2|>h$,
using~\eqref{9423331236d65} we infer that
\begin{equation}\label{9423331236d65:3}
\begin{split}
&\beta(x_2)-\varpi\,\Phi(x_1)-u(x)\le
-\frac{\delta\e}2 h+
2^{\frac{1+\alpha}2}\e^{\frac1{c_0}} |x_1|^{1+\alpha}\\
&\qquad\le- \e\left(
\frac{\delta}2 h-
2^{\frac{1+\alpha}2}\e^{\frac{1-c_0}{c_0}}R^{1+\alpha}\right)=-
\e\left(
\frac{\delta}2 h-
2^{\frac{1+\alpha}2}
C_\sharp^{1+\alpha}\,\mu^{1+\alpha}
\e^{\frac{1-c_0}{c_0}-
\frac{1+\alpha}s
}\right)\\&\qquad\le-\frac{\delta\e h}{4}<0.
\end{split}\end{equation}
In view of~\eqref{9423331236d65:2} and~\eqref{9423331236d65:3},
we conclude that~\eqref{MPLE2}
is satisfied when~$x_2\in(-L,0)$ and~$|x_1|<R$.

{F}rom these considerations, we see that
it is enough now to check~\eqref{MPLE2}
with~$x_2\in(0,\mu)$ and~$|x_1|\in(R/2,R)$,
and with~$x_2\in(\mu,L)$ and~$|x_1|<R$.

If~$x_2\in(0,\mu)$ and~$|x_1|\in(R/2,R)$, in light of~\eqref{78hs:93e}
we have that
\begin{eqnarray*}
&&\beta(x_2)-\varpi\,\Phi(x_1)-u(x)\le
\e\delta x_2+\e a x_2 -\varpi\,\Phi(x_1)-
\e\,\bar a\, x_2^{\frac{1+s}2}+ \overline{C}\e\,\big(|x|^{\frac{3+s}2}+\delta\big)
\\&&\qquad
\le
\e\delta x_2+\e a x_2 -\varpi\,\Phi(x_1)-
\e\,\bar a\,x_2^{\frac{1+s}2}+ \overline{C}\e\delta+
2^{\frac{3+s}4}\,
\overline{C}\e\,\big(x_2^{\frac{3+s}2}+|x_1|^{\frac{3+s}2}\big)\\&&
\qquad
\le \e\,x_2^{\frac{1+s}2}\big( (\delta+a)\mu^{\frac{1-s}2}+
2^{\frac{3+s}4}\,\overline{C}\mu
-\bar a\big) 
+ \overline{C}\e\delta-\big(\varpi\,\Phi(x_1)-
2^{\frac{3+s}4}\,
\overline{C}\e\, |x_1|^{\frac{3+s}2}\big)\\&&
\qquad
\le \e\left(\overline{C}\delta-\left(\frac{C_\star}{\mu^s}\,\Phi(x_1)-
2^{\frac{3+s}4}\,
\overline{C}\, |x_1|^{\frac{3+s}2}\right)\right)\\&&
\qquad
\le \e\left(\overline{C}\delta-\left(\frac{C_\star}{\mu^s}-
2^{\frac{3+s}4}\,
\overline{C}\right)|x_1|^{\frac{3+s}2}\right)
\\&&
\qquad
\le \e\left(\overline{C}\delta-\frac{C_\star}{2\mu^s}\,\left(\frac{R}2\right)^{\frac{3+s}2}\right)<0.
\end{eqnarray*}
This checks~\eqref{MPLE2}
when~$x_2\in(0,\mu)$ and~$|x_1|\in(R/2,R)$,
and therefore we are only left with the
case in which~$x_2\in(\mu,L)$ and~$|x_1|<R$.

In this setting, we distinguish when~$x_2\in(\mu,\lambda)$ and~$|x_1|<R$
and when~$
x_2\in(\lambda,L)$ and~$|x_1|<R$.

Then,
when~$x_2\in(\mu,\lambda)$ and~$|x_1|<R$, we make use of~\eqref{78hs:93e}
and we see that
\begin{eqnarray*}
&&\beta(x_2)-\varpi\,\Phi(x_1)-u(x)\le
\e\delta x_2+\e a x_2-\varpi\,\Phi(x_1)
-
\e\,\bar a\, x_2^{\frac{1+s}2}+ \overline{C}\e\,\big(|x|^{\frac{3+s}2}+\delta\big)\\&&
\qquad
\le
\e(\delta + a) x_2-\varpi\,\Phi(x_1)
-
\e\,\bar a\, x_2^{\frac{1+s}2}+ 2^{\frac{3+s}4}\overline{C}\e\,\big(
|x_1|^{\frac{3+s}2}+x_2^{\frac{3+s}2}\big)+
\overline{C}\e\delta
\\&&
\qquad
=\e x_2^{\frac{1+s}2}\left(
(\delta + a) x_2^{\frac{1-s}2}+
2^{\frac{3+s}4}\overline{C}x_2
+\frac{\overline{C}\delta}{
x_2^{\frac{1+s}2}}
-\bar a \right)+ 2^{\frac{3+s}4}\overline{C}\e\,
|x_1|^{\frac{3+s}2}
-\varpi\,\Phi(x_1)\\&&
\qquad
\le\e x_2^{\frac{1+s}2}\left(
(\delta + a) \lambda^{\frac{1-s}2}+
2^{\frac{3+s}4}\overline{C}\lambda
+\frac{\overline{C}\delta}{
\mu^{\frac{1+s}2}}
-\bar a \right)+ \e|x_1|^{\frac{3+s}2}\left(2^{\frac{3+s}4}\overline{C}\,
-\frac{C_\star}{\mu^s}\right)\\
&&
\qquad
\le-\frac{\e \bar a}{16} x_2^{\frac{1+s}2}<0,
\end{eqnarray*}
that gives~\eqref{MPLE2} in this case.

If instead~$
x_2\in(\lambda,L)$ and~$|x_1|<R$, we exploit~\eqref{LAPwkeryuifmma2}.
In this case, we claim that
\begin{equation}\label{8UAll12-39jjdna}
|u(x)|\le 2^{1+\alpha}\e\max\{1,|x|^{1+\alpha}\}.
\end{equation}
Indeed, if~$|x|<1$, we use~\eqref{LAPwkeryuifmma2} with~$k:=0$
and we obtain~\eqref{8UAll12-39jjdna}.
If instead~$|x|\ge1$, we remark that~$|x|\le|x_1|+|x_2|<R+L<2^{\tilde k_0}$,
and therefore we are in the position of applying~\eqref{LAPwkeryuifmma2}
with~$k$ such that~$2^{k-1}\le |x|\le 2^k$, and conclude that
$$ |u(x)|\le\e\,(2^k)^{1+\alpha}\le 2^{1+\alpha}\e\,|x|^{1+\alpha}.$$
This completes the proof of~\eqref{8UAll12-39jjdna}.

Consequently, using~\eqref{8UAll12-39jjdna}, we find that
\begin{equation}\label{ABSbaatiina}
\begin{split}
&\beta(x_2)-\varpi\,\Phi(x_1)-u(x)\le\e\delta x_2-\e b x_2^{1+\alpha}-\varpi\,\Phi(x_1)+
2^{1+\alpha}\e\max\{1,|x|^{1+\alpha}\}\\
&\qquad\le \e\delta x_2-\e b x_2^{1+\alpha}-\varpi\,\Phi(x_1)+
2^{2(1+\alpha)}\e\,\big(1+|x_1|^{1+\alpha}+x_2^{{1+\alpha}}\big)\\
&\qquad\le \e x_2^{1+\alpha}
\left(\frac\delta{x_2^\alpha}+
2^{2(1+\alpha)}- b 
\right)+2^{2(1+\alpha)}\e+\e\left(
2^{2(1+\alpha)} |x_1|^{1+\alpha}-\frac{C_\star}{\mu^s}\Phi(x_1)
\right).
\end{split}\end{equation}
Now we consider the function
$$ [0,+\infty)\ni t\mapsto g(t):= t^{1+\alpha}-K t^{\frac{3+s}2},$$
for a given~$K>0$, and we claim that
\begin{equation}\label{y7so239dj}
g(t)\le \frac{1}{K^{\frac{2(1+\alpha)}{1+s-2\alpha}}}.
\end{equation}
Indeed, if~$t\ge\frac{1}{K^{\frac{2}{1+s-2\alpha}}}$,
we see that
$$ g(t)=t^{1+\alpha}\left(1-K t^{\frac{1+s-2\alpha}2}\right) \le0,$$
and so~\eqref{y7so239dj} is satisfied. If instead~$t\in\left[0,\frac{1}{K^{\frac{2}{1+s-2\alpha}}}\right)$
we have that
$$ g(t)\le t^{1+\alpha}\le\frac{1}{K^{\frac{2(1+\alpha)}{1+s-2\alpha}}},$$
proving~\eqref{y7so239dj} also in this case.

Now, from~\eqref{y7so239dj},
\begin{eqnarray*}&&
2^{2(1+\alpha)} |x_1|^{1+\alpha}-\frac{C_\star}{\mu^s}\Phi(x_1)=
2^{2(1+\alpha)} \left(|x_1|^{1+\alpha}-\frac{C_\star}{2^{2(1+\alpha)}\mu^s}
|x_1|^{\frac{3+s}2}\right)\\
&&\qquad\le 2^{2(1+\alpha)}\left( \frac{2^{2(1+\alpha)}\mu^s}{C_\star}\right)^{\frac{2(1+\alpha)}{1+s-2\alpha}}
= \tilde C\,\mu^{\frac{2s(1+\alpha)}{1+s-2\alpha}},
\end{eqnarray*}
for a suitable~$\tilde C>0$.

Plugging this information into~\eqref{ABSbaatiina}, we conclude that
\begin{eqnarray*}&&
\beta(x_2)-\varpi\,\Phi(x_1)-u(x)\le
\e x_2^{1+\alpha}
\left(\frac\delta{x_2^\alpha}+
2^{2(1+\alpha)}- b 
\right)+2^{2(1+\alpha)}\e+\tilde C\,\e\mu^{\frac{2s(1+\alpha)}{1+s-2\alpha}}\\&&\qquad
\le
\e x_2^{1+\alpha}
\left(\frac\delta{\lambda^\alpha}+
2^{2(1+\alpha)}- b 
\right)+2^{2(2+\alpha)}\e
\le
-\frac{2^{4(2+\alpha)}\e x_2^{1+\alpha}}{\lambda^{1+\alpha}}+2^{2(2+\alpha)}\e\\&&\qquad
\le
-2^{4(2+\alpha)}\e +2^{2(2+\alpha)}\e<0,
\end{eqnarray*}
and this completes the proof of~\eqref{MPLE2}.

Then, \eqref{MPLE1} and~\eqref{MPLE2}
give that~$E^{(\beta,\varpi,R)}$
is below the graph of~$u$.

As a result, if~$x_2\in(0,\lambda)$ and~$|x_1|<R/2$,
$$ \e\delta x_2+\frac{\e\bar a}8 x_2-\varpi\Phi(x_1)
=
\bar\ell x_2+\e a x_2-\varpi\Phi(x_1)=\beta(x)-\varpi\Phi(x_1)\le u(x).$$
Consequently, if we consider the second blow-up of the graph of~$u$,
as in Lemma~\ref{BOAmdfiUAMP}, we conclude that
$$ \left\{x_{3}< \left(\e\delta +\frac{\e\bar a}8\right) x_2
\right\}\cap \{x_2>0\} \subseteq E_{00}.$$
{F}rom this and Corollary~\ref{COR:ALTE}, we conclude that~\eqref{POS-E001-BI1b}
and~\eqref{POS-E001-BI3b} cannot hold true,
and therefore necessarily~\eqref{POS-E001-BI2b} must be satisfied,
namely~$E_{00}\cap\{x_2>0\}=\{x_2>0\}$
and 
$$ \lim_{x_2\searrow0} u(0,x_2)>0.$$
The latter inequality is in contradiction with~\eqref{ESAy}
and therefore this completes the proof of the desired claim in~\eqref{RUsnsca}.
\end{proof}

Now, we can complete the proof of Theorem~\ref{NLMS} :

\begin{proof}[Proof of Theorem~\ref{NLMS} ]
The core of the proof is to establish~\eqref{VANP},
since~\eqref{THAN-2} would then easily follow.

To this end, from Theorem~\ref{ijsd78AIsjjd}, 
arguing as in Theorem~8.2 of~\cite{2019arXiv190405393D},
and exploiting Corollary~\ref{COR:ALTE} here
(instead of Theorem~4.1 in~\cite{2019arXiv190405393D}) to obtain~(8.19)
in~\cite{2019arXiv190405393D},
one establishes~\eqref{VANP}
with~$\frac{3+s}2$ replaced by~$1+\alpha$,
for a given~$\alpha\in(0,s)$. 

Then, to improve this
regularity exponent and complete the proof of~\eqref{VANP}, one can proceed as in the proof of
Theorem~1.2 of~\cite{2019arXiv190405393D}.
\end{proof}

\begin{appendix}

\section{A classical counterpart of Theorem~\ref{FLFL}}\label{APPA}

In this appendix, we show that Theorem~\ref{FLFL}
possesses a classical counterpart for the Laplace equation, namely:

\begin{theorem}\label{FLFLCLA}
Let~$n\ge2$, $k\in\N$ and~$f\in C^k(\overline{B_1'})$. Then, for every~$\e>0$
there exist~$f_\e\in C^k(\R^{n-1})$ and~$u_\e\in C(\R^n)$ such that
\begin{equation*} \begin{cases}
\Delta u_\e=0 & {\mbox{ in }} B_1\cap \{x_n>0\},\\
u_\e=0 & {\mbox{ in }} B_1\cap\{x_n=0\},\end{cases}\end{equation*}
\begin{equation*}
\displaystyle\lim_{x_n\searrow0} \frac{u_\e(x',x_n)}{x_n}=f_\e(x')\qquad{\mbox{ for all }}
x'\in B_1',\end{equation*}
and
\begin{equation*}
\| f_\e-f\|_{C^k(\overline{B_1'})}\le\e.
\end{equation*}
\end{theorem}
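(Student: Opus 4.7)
The plan is to reduce the statement to the case in which the prescribed datum is a polynomial, and then to solve the problem \emph{exactly} by an explicit harmonic polynomial. More precisely, the first step is to invoke the Stone-Weierstra{\ss} theorem (applied simultaneously to $f$ and to each of its derivatives $D^\gamma f$ for $|\gamma|\le k$, all of which are continuous on the compact set $\overline{B_1'}$) to produce, for any given $\e>0$, a polynomial $f_\e$ on $\R^{n-1}$ with $\|f_\e-f\|_{C^k(\overline{B_1'})}\le\e$. After this reduction, the task is to construct, for an arbitrary polynomial $p(x')$ on $\R^{n-1}$, a function $u$ harmonic in all of $\R^n$, vanishing identically on $\{x_n=0\}$, and satisfying $\lim_{x_n\searrow 0}u(x',x_n)/x_n = p(x')$.

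The core construction I would use is the explicit Cauchy--Kovalevskaya-type expansion
$$ u(x) \;:=\; \sum_{j=0}^{N} \frac{(-1)^j\,x_n^{2j+1}}{(2j+1)!}\,(\Delta_{x'})^j p(x'), $$
where $\Delta_{x'}$ denotes the Laplacian in $x'=(x_1,\dots,x_{n-1})$ and $N\in\N$ is chosen large enough that $(\Delta_{x'})^{N+1}p\equiv 0$; such $N$ exists because each application of $\Delta_{x'}$ strictly lowers the total degree of a polynomial, so iterated Laplacians eventually annihilate $p$. A short termwise computation shows that $\partial_{x_n}^2 u$ and $\Delta_{x'} u$ cancel pair by pair up to a remainder proportional to $(\Delta_{x'})^{N+1}p$, which vanishes, so $\Delta u\equiv 0$ on the whole of $\R^n$. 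Every summand carries a factor of $x_n$, hence $u(x',0)=0$ identically; moreover differentiating once in $x_n$ and evaluating at $x_n=0$ leaves only the $j=0$ term, yielding $\partial_{x_n}u(x',0)=p(x')$ and therefore $\lim_{x_n\searrow 0}u(x',x_n)/x_n = p(x')$. Taking $p:=f_\e$ and denoting the resulting harmonic polynomial by $u_\e$ then closes the argument, since harmonic polynomials are automatically in $C^\infty(\R^n)\subset C(\R^n)$.

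I do not expect any genuine obstacle here: this classical counterpart is much softer than its fractional sibling Theorem~\ref{FLFL}, and the only ``flexibility'' that is used is the density of polynomials in $C^k(\overline{B_1'})$. The reason is structural. In the local case one can prescribe the Dirichlet trace and the normal derivative \emph{exactly} by a harmonic polynomial via the Cauchy--Kovalevskaya recursion written above, so the prescription of the boundary derivative is not even approximate but exact. In the fractional case, by contrast, no such explicit recursion is available (as explained in the excerpt, one has to build a homogeneous solution on a cone via Lemma~\ref{BAN} and then span the space of admissible trace-jets by a translation-and-rescaling argument via Lemma~\ref{fan}), and the prescription is only up to a small error. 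In the classical proof sketched above, the whole construction collapses to one explicit display plus Weierstra{\ss} approximation.
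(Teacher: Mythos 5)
Your proposal is correct, and its centerpiece --- the expansion $u=\sum_j \frac{(-1)^j x_n^{2j+1}}{(2j+1)!}(\Delta_{x'})^j p$ --- is exactly the polynomial construction that appears in the paper's Appendix~\ref{APPA}. The overall architecture, however, is genuinely more direct than the paper's. The paper uses that expansion only to prove Lemma~\ref{fan} with $\sigma:=1$, i.e.\ to prescribe the \emph{jet at the origin} of the trace, and then feeds this into the machinery of the proof of Theorem~\ref{FLFL}: reduction to monomials, the rescaling $u_\e(x)=\e^{-1-|\mu|}u_\star(\e x)$, and a Taylor-expansion estimate to control the $C^k$ error. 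You observe instead that in the classical case the same expansion already solves the boundary-value problem \emph{exactly and globally} for any polynomial datum $p$ (harmonic on all of $\R^n$, vanishing on $\{x_n=0\}$, with normal derivative identically $p$ on $B_1'$), so the only approximation needed is the density of polynomials in $C^k(\overline{B_1'})$; no jet-matching, rescaling, or Taylor remainder analysis is required. This buys a shorter, self-contained proof, at the cost of obscuring the parallel with the fractional case that the paper is deliberately emphasizing (there the homogeneous cone solutions of Lemma~\ref{BAN} only control the jet at one point, which is why the rescaling step is unavoidable for $\sigma\in(0,1)$). One small point of care, which applies equally to the paper's own reduction to monomials: plain Stone--Weierstra{\ss} gives density of polynomials in $C^0$, and the simultaneous approximation of all derivatives up to order $k$ needs the standard (but slightly stronger) fact that polynomials are dense in $C^k(\overline{B_1'})$, e.g.\ via mollification of a $C^k$ extension followed by truncation of a convergent analytic approximation; it is worth stating this explicitly rather than attributing it to Stone--Weierstra{\ss} alone.
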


We observe that formally Theorem~\ref{FLFLCLA} corresponds to Theorem~\ref{FLFL}
with~$\sigma:=1$. The arguments that we proposed for~$\sigma\in(0,1)$
can be carried out also when~$\sigma=1$, and thus prove Theorem~\ref{FLFLCLA}.
Indeed, for the classical Laplace equation,
one can obtain Lemma~\ref{BAN}
by taking, for instance, the real part of the holomorphic function~$\C\ni z\mapsto z^k$, with~$k\in\N$, and then use
this homogeneous solution in the proof of Lemma~\ref{fan}.
Then, once Lemma~\ref{fan} is proved with~$\sigma:=1$, one can
exploit the proof of 
Theorem~\ref{FLFL} presented on page~\pageref{89-ppA}
with~$\sigma:=1$ and obtain Theorem~\ref{FLFLCLA}.

However, in the classical case there is also an explicit polynomial expansion
which recovers Lemma~\ref{fan} with~$\sigma:=1$, that is when~\eqref{KK}
is replaced by
\begin{equation*} \begin{cases}
\Delta u=0 & {\mbox{ in }} B_r\cap \{x_n>0\},\\
u=0 & {\mbox{ in }} B_r\cap\{x_n=0\}.\end{cases}\end{equation*}

Hence, to establish Theorem~\ref{FLFLCLA}, we focus on the following argument
of classical flavor (which is not reproducible for~$\sigma\in(0,1)$
and thus requires new strategies in the case of Theorem~\ref{FLFL}).

\begin{proof}[Proof of Lemma~\ref{fan} with~$\sigma:=1$]
Given~$\kappa=\big(\kappa_\gamma
\big)_{{\gamma\in\N^{n-1}}\atop{|\gamma|\le k}}$ in~$\R^{N_k}$, where~$N_k$ is defined
in~\eqref{ennekappa},
we aim at finding a function~$u\in C(\overline{B_1})$
such that
\begin{equation}\label{QLA} \begin{cases}
\Delta u=0 & {\mbox{ in }} B_1\cap \{x_n>0\},\\
u=0 & {\mbox{ in }} B_1\cap\{x_n=0\},\end{cases}\end{equation}
and for which there exists~$\phi\in C^k(\overline{B'_1})$ such that
\begin{equation}\label{uvA4}
\lim_{x_n\searrow0} \frac{u(x',x_n)}{x_n}=\phi(x')\qquad{\mbox{ for all }}
x'\in B_1',\end{equation}
with, in the notation of~\eqref{mul},
\begin{equation}\label{78ssdJA}
{\mathcal{D}}^k\phi(0)=\kappa .\end{equation}
To this end, we define~$\phi$ to be the polynomial
$$ \phi(x'):= \sum_{{\gamma\in\N^{n-1}}\atop{|\gamma|\le k}}
\frac{\kappa_\gamma}{\gamma!} (x')^\gamma.$$
In this way, \eqref{78ssdJA} is automatically satisfied. Furthermore, we observe that~$\Delta_{x'}^j\phi$
vanishes identically when~$2j>k$, and therefore we can set
$$ u(x',x_n):=\sum_{{j\in\N}\atop{j\le k/2}} \frac{(-1)^j\,
\Delta_{x'}^j \phi(x')}{(2j+1)!}\,x_n^{2j+1}
=
\sum_{j=0}^{+\infty} \frac{(-1)^j\,
\Delta_{x'}^j \phi(x')}{(2j+1)!}\,x_n^{2j+1}.$$
We observe that
$$\lim_{x_n\searrow0} \frac{u(x',x_n)}{x_n}=
\lim_{x_n\searrow0}
\sum_{{j\in\N}\atop{j\le k/2}} \frac{(-1)^j\,
\Delta_{x'}^j \phi(x')}{(2j+1)!}\,x_n^{2j}=
\phi(x'),$$
which establishes~\eqref{uvA4}.

Also, we see that~$u(x',0)=0$, and moreover,
using the notation~$J:=j-1$,
\begin{eqnarray*}\Delta u(x)&=&\Delta_{x'} u(x)+\partial^2_{x_n}u(x)\\
&=&
\sum_{j=0}^{+\infty} \frac{(-1)^j\,
\Delta_{x'}^{j+1} \phi(x')}{(2j+1)!}\,x_n^{2j+1}
+
\sum_{j=1}^{+\infty} \frac{(-1)^j\,
\Delta_{x'}^j \phi(x')}{(2j-1)!}\,x_n^{2j-1}\\
&=&
\sum_{j=0}^{+\infty} \frac{(-1)^j\,
\Delta_{x'}^{j+1} \phi(x')}{(2j+1)!}\,x_n^{2j+1}
+
\sum_{J=0}^{+\infty} \frac{(-1)^{J+1}\,
\Delta_{x'}^{J+1} \phi(x')}{(2J+1)!}\,x_n^{2J+1}\\&=&0,
\end{eqnarray*}
which gives~\eqref{QLA}.
\end{proof}

\end{appendix}

\section*{Acknowledgments}

The first and third authors are member of INdAM and
are supported by the Australian Research Council
Discovery Project DP170104880 NEW ``Nonlocal Equations at Work''.
The first author's visit to Columbia has been partially funded by
the Fulbright Foundation
and the Australian Research Council DECRA DE180100957
``PDEs, free boundaries and applications''. The second author is supported
by the National Science Foundation grant DMS-1500438.
The third author's visit to Columbia has been partially funded by
the Italian Piano di Sostegno alla Ricerca 
``Equazioni nonlocali e applicazioni''.

\section*{References}
\begin{biblist}

\bib{MR2075671}{article}{
   author={Ba\~{n}uelos, Rodrigo},
   author={Bogdan, Krzysztof},
   title={Symmetric stable processes in cones},
   journal={Potential Anal.},
   volume={21},
   date={2004},
   number={3},
   pages={263--288},
   issn={0926-2601},
   review={\MR{2075671}},
   doi={10.1023/B:POTA.0000033333.72236.dc},
}

\bib{MR3783214}{article}{
   author={Barrios, Bego\~{n}a},
   author={Figalli, Alessio},
   author={Ros-Oton, Xavier},
   title={Global regularity for the free boundary in the obstacle problem
   for the fractional Laplacian},
   journal={Amer. J. Math.},
   volume={140},
   date={2018},
   number={2},
   pages={415--447},
   issn={0002-9327},
   review={\MR{3783214}},
   doi={10.1353/ajm.2018.0010},
}

\bib{MR3331523}{article}{
   author={Barrios, Bego\~{n}a},
   author={Figalli, Alessio},
   author={Valdinoci, Enrico},
   title={Bootstrap regularity for integro-differential operators and its
   application to nonlocal minimal surfaces},
   journal={Ann. Sc. Norm. Super. Pisa Cl. Sci. (5)},
   volume={13},
   date={2014},
   number={3},
   pages={609--639},
   issn={0391-173X},
   review={\MR{3331523}},
}

\bib{MR1438304}{article}{
   author={Bogdan, Krzysztof},
   title={The boundary Harnack principle for the fractional Laplacian},
   journal={Studia Math.},
   volume={123},
   date={1997},
   number={1},
   pages={43--80},
   issn={0039-3223},
   review={\MR{1438304}},
   doi={10.4064/sm-123-1-43-80},
}

\bib{NOCHETTO}{article}{
       author = {Borthagaray, Juan Pablo},
       author = {Li, Wenbo},
       author = {Nochetto, Ricardo H.},
        title = {Finite element discretizations of nonlocal minimal graphs: convergence},
      journal = {arXiv e-prints},
     date = {2019},
       eprint = {1905.06395},
       adsurl = {https://ui.adsabs.harvard.edu/abs/2019arXiv190506395B},
 }

\bib{MR3716924}{article}{
   author={Bucur, Claudia},
   title={Local density of Caputo-stationary functions in the space of
   smooth functions},
   journal={ESAIM Control Optim. Calc. Var.},
   volume={23},
   date={2017},
   number={4},
   pages={1361--1380},
   issn={1292-8119},
   review={\MR{3716924}},
   doi={10.1051/cocv/2016056},
}

\bib{MR3926519}{article}{
   author={Bucur, Claudia},
   author={Lombardini, Luca},
   author={Valdinoci, Enrico},
   title={Complete stickiness of nonlocal minimal surfaces for small values
   of the fractional parameter},
   journal={Ann. Inst. H. Poincar\'{e} Anal. Non Lin\'{e}aire},
   volume={36},
   date={2019},
   number={3},
   pages={655--703},
   issn={0294-1449},
   review={\MR{3926519}},
   doi={10.1016/j.anihpc.2018.08.003},
}

\bib{CCC}{article}{
       author = {Cabr\'e, Xavier},
       author = {Cinti, Eleonora},
       author = {Serra, Joaquim},
        title = {Stable $s$-minimal cones in $\R^3$ are flat for $s\sim 1$},
        journal={J. Reine Angew. Math.},
}

\bib{MR3934589}{article}{
   author={Cabr\'{e}, Xavier},
   author={Cozzi, Matteo},
   title={A gradient estimate for nonlocal minimal graphs},
   journal={Duke Math. J.},
   volume={168},
   date={2019},
   number={5},
   pages={775--848},
   issn={0012-7094},
   review={\MR{3934589}},
   doi={10.1215/00127094-2018-0052},
}

\bib{MR3532394}{article}{
   author={Caffarelli, L.},
   author={De Silva, D.},
   author={Savin, O.},
   title={Obstacle-type problems for minimal surfaces},
   journal={Comm. Partial Differential Equations},
   volume={41},
   date={2016},
   number={8},
   pages={1303--1323},
   issn={0360-5302},
   review={\MR{3532394}},
   doi={10.1080/03605302.2016.1192646},
}

\bib{MR3579567}{article}{
   author={Caffarelli, Luis},
   author={Dipierro, Serena},
   author={Valdinoci, Enrico},
   title={A logistic equation with nonlocal interactions},
   journal={Kinet. Relat. Models},
   volume={10},
   date={2017},
   number={1},
   pages={141--170},
   issn={1937-5093},
   review={\MR{3579567}},
   doi={10.3934/krm.2017006},
}

\bib{MR2675483}{article}{
   author={Caffarelli, L.},
   author={Roquejoffre, J.-M.},
   author={Savin, O.},
   title={Nonlocal minimal surfaces},
   journal={Comm. Pure Appl. Math.},
   volume={63},
   date={2010},
   number={9},
   pages={1111--1144},
   issn={0010-3640},
   review={\MR{2675483}},
   doi={10.1002/cpa.20331},
}

\bib{MR3107529}{article}{
   author={Caffarelli, Luis},
   author={Valdinoci, Enrico},
   title={Regularity properties of nonlocal minimal surfaces via limiting
   arguments},
   journal={Adv. Math.},
   volume={248},
   date={2013},
   pages={843--871},
   issn={0001-8708},
   review={\MR{3107529}},
   doi={10.1016/j.aim.2013.08.007},
}

\bib{CAR}{article}{
author={Carbotti, Alessandro},
      author={Dipierro, Serena},
         author={Valdinoci, Enrico},
title = {Local density of Caputo-stationary functions of any order},
journal = {Complex Var. Elliptic Equ.},
doi = {10.1080/17476933.2018.1544631},
URL = {https://doi.org/10.1080/17476933.2018.1544631},
}

\bib{CARBOO}{book}{
   author={Carbotti, Alessandro},
      author={Dipierro, Serena},
         author={Valdinoci, Enrico},
         title={Local density of solutions to fractional equations},
series={De Gruyter Studies in Mathematics},
   publisher={De Gruyter, Berlin},
   date={2019},
   }

\bib{BV}{article}{
       author = {Cinti, Eleonora},
       author = {Serra, Joaquim},
       author = {Valdinoci, Enrico},
        title = {Quantitative flatness results and $BV$-estimates for stable nonlocal minimal surfaces},
journal={J. Differential Geom.},
}

\bib{2018arXiv180705774C}{article}{
       author = {Cozzi, Matteo},
       author = {Farina, Alberto},
       author = {Lombardini, Luca},
        title = {Bernstein-Moser-type results for nonlocal minimal graphs},
      journal = {Comm. Anal. Geom.},
}

\bib{MR3588123}{article}{
   author={Cozzi, Matteo},
   author={Figalli, Alessio},
   title={Regularity theory for local and nonlocal minimal surfaces: an
   overview},
   conference={
      title={Nonlocal and nonlinear diffusions and interactions: new methods
      and directions},
   },
   book={
      series={Lecture Notes in Math.},
      volume={2186},
      publisher={Springer, Cham},
   },
   date={2017},
   pages={117--158},
   review={\MR{3588123}},
}
   
\bib{MR3798717}{article}{
   author={D\'{a}vila, Juan},
   author={del Pino, Manuel},
   author={Wei, Juncheng},
   title={Nonlocal $s$-minimal surfaces and Lawson cones},
   journal={J. Differential Geom.},
   volume={109},
   date={2018},
   number={1},
   pages={111--175},
   issn={0022-040X},
   review={\MR{3798717}},
   doi={10.4310/jdg/1525399218},
}
   
\bib{MR3516886}{article}{
   author={Dipierro, Serena},
   author={Savin, Ovidiu},
   author={Valdinoci, Enrico},
   title={Graph properties for nonlocal minimal surfaces},
   journal={Calc. Var. Partial Differential Equations},
   volume={55},
   date={2016},
   number={4},
   pages={Art. 86, 25},
   issn={0944-2669},
   review={\MR{3516886}},
   doi={10.1007/s00526-016-1020-9},
}

\bib{MR3596708}{article}{
   author={Dipierro, Serena},
   author={Savin, Ovidiu},
   author={Valdinoci, Enrico},
   title={Boundary behavior of nonlocal minimal surfaces},
   journal={J. Funct. Anal.},
   volume={272},
   date={2017},
   number={5},
   pages={1791--1851},
   issn={0022-1236},
   review={\MR{3596708}},
   doi={10.1016/j.jfa.2016.11.016},
}

\bib{MR3626547}{article}{
   author={Dipierro, Serena},
   author={Savin, Ovidiu},
   author={Valdinoci, Enrico},
   title={All functions are locally $s$-harmonic up to a small error},
   journal={J. Eur. Math. Soc. (JEMS)},
   volume={19},
   date={2017},
   number={4},
   pages={957--966},
   issn={1435-9855},
   review={\MR{3626547}},
   doi={10.4171/JEMS/684},
}

\bib{MR3935264}{article}{
   author={Dipierro, Serena},
   author={Savin, Ovidiu},
   author={Valdinoci, Enrico},
   title={Local Approximation of Arbitrary Functions by Solutions of
   Nonlocal Equations},
   journal={J. Geom. Anal.},
   volume={29},
   date={2019},
   number={2},
   pages={1428--1455},
   issn={1050-6926},
   review={\MR{3935264}},
   doi={10.1007/s12220-018-0045-z},
}

\bib{2019arXiv190405393D}{article}{
       author = {Dipierro, Serena},
       author = {Savin, Ovidiu},
       author = {Valdinoci, Enrico},
        title = {Nonlocal minimal graphs in the plane are generically sticky},
      journal = {arXiv e-prints},
     date = {2019},
archivePrefix = {arXiv},
       eprint = {1904.05393},
       adsurl = {https://ui.adsabs.harvard.edu/abs/2019arXiv190405393D},
}

\bib{MR3824212}{article}{
   author={Dipierro, Serena},
   author={Valdinoci, Enrico},
   title={Nonlocal minimal surfaces: interior regularity, quantitative
   estimates and boundary stickiness},
   conference={
      title={Recent developments in nonlocal theory},
   },
   book={
      publisher={De Gruyter, Berlin},
   },
   date={2018},
   pages={165--209},
   review={\MR{3824212}},
}

\bib{PISA}{article}{
   author={Farina, Alberto},
   author={Valdinoci, Enrico},
   title={Flatness results for nonlocal minimal cones and subgraphs},
   journal={Ann. Sc. Norm. Super. Pisa Cl. Sci. (5)},
   volume={19},
   date={2019},
}

\bib{MR3680376}{article}{
   author={Figalli, Alessio},
   author={Valdinoci, Enrico},
   title={Regularity and Bernstein-type results for nonlocal minimal
   surfaces},
   journal={J. Reine Angew. Math.},
   volume={729},
   date={2017},
   pages={263--273},
   issn={0075-4102},
   review={\MR{3680376}},
   doi={10.1515/crelle-2015-0006},
}

\bib{MR3293447}{article}{
   author={Grubb, Gerd},
   title={Local and nonlocal boundary conditions for $\mu$-transmission and
   fractional elliptic pseudodifferential operators},
   journal={Anal. PDE},
   volume={7},
   date={2014},
   number={7},
   pages={1649--1682},
   issn={2157-5045},
   review={\MR{3293447}},
   doi={10.2140/apde.2014.7.1649},
}

\bib{MR3276603}{article}{
   author={Grubb, Gerd},
   title={Fractional Laplacians on domains, a development of H\"{o}rmander's
   theory of $\mu$-transmission pseudodifferential operators},
   journal={Adv. Math.},
   volume={268},
   date={2015},
   pages={478--528},
   issn={0001-8708},
   review={\MR{3276603}},
   doi={10.1016/j.aim.2014.09.018},
}

\bib{KRYL}{article}{
       author = {Krylov, N.~V.},
        title = {On the paper ``All functions are locally $s$-harmonic up to a small error'' by Dipierro, Savin, and Valdinoci},
      journal = {arXiv e-prints},
         date = {2018},
archivePrefix = {arXiv},
       eprint = {1810.07648},
      adsurl = {https://ui.adsabs.harvard.edu/abs/2018arXiv181007648K},
}

\bib{MR3827804}{article}{
   author={Lombardini, Luca},
   title={Approximation of sets of finite fractional perimeter by smooth
   sets and comparison of local and global $s$-minimal surfaces},
   journal={Interfaces Free Bound.},
   volume={20},
   date={2018},
   number={2},
   pages={261--296},
   issn={1463-9963},
   review={\MR{3827804}},
   doi={10.4171/IFB/402},
}
		
\bib{MR1936081}{article}{
   author={M\'{e}ndez-Hern\'{a}ndez, Pedro J.},
   title={Exit times from cones in $\bold R^n$ of symmetric stable
   processes},
   journal={Illinois J. Math.},
   volume={46},
   date={2002},
   number={1},
   pages={155--163},
   issn={0019-2082},
   review={\MR{1936081}},
}

\bib{MR3168912}{article}{
   author={Ros-Oton, Xavier},
   author={Serra, Joaquim},
   title={The Dirichlet problem for the fractional Laplacian: regularity up
   to the boundary},
   language={English, with English and French summaries},
   journal={J. Math. Pures Appl. (9)},
   volume={101},
   date={2014},
   number={3},
   pages={275--302},
   issn={0021-7824},
   review={\MR{3168912}},
   doi={10.1016/j.matpur.2013.06.003},
}

\bib{MR3694738}{article}{
   author={Ros-Oton, Xavier},
   author={Serra, Joaquim},
   title={Boundary regularity estimates for nonlocal elliptic equations in
   $C^1$ and $C^{1,\alpha}$ domains},
   journal={Ann. Mat. Pura Appl. (4)},
   volume={196},
   date={2017},
   number={5},
   pages={1637--1668},
   issn={0373-3114},
   review={\MR{3694738}},
   doi={10.1007/s10231-016-0632-1},
}

\bib{MR3774704}{article}{
   author={R\"{u}land, Angkana},
   author={Salo, Mikko},
   title={Exponential instability in the fractional Calder\'{o}n problem},
   journal={Inverse Problems},
   volume={34},
   date={2018},
   number={4},
   pages={045003, 21},
   issn={0266-5611},
   review={\MR{3774704}},
   doi={10.1088/1361-6420/aaac5a},
}

\bib{SAEZ}{article}{
   author={S\'aez, Mariel},
   author={Valdinoci, Enrico},
   title={On the evolution
by fractional mean curvature},
journal={Comm. Anal. Geom.},
   volume={27},
   date={2019},
   number={1},
   pages={211-249},
}

\bib{MR3090533}{article}{
   author={Savin, Ovidiu},
   author={Valdinoci, Enrico},
   title={Regularity of nonlocal minimal cones in dimension 2},
   journal={Calc. Var. Partial Differential Equations},
   volume={48},
   date={2013},
   number={1-2},
   pages={33--39},
   issn={0944-2669},
   review={\MR{3090533}},
   doi={10.1007/s00526-012-0539-7},
}

\bib{MR3810469}{article}{
   author={Terracini, Susanna},
   author={Tortone, Giorgio},
   author={Vita, Stefano},
   title={On $s$-harmonic functions on cones},
   journal={Anal. PDE},
   volume={11},
   date={2018},
   number={7},
   pages={1653--1691},
   issn={2157-5045},
   review={\MR{3810469}},
   doi={10.2140/apde.2018.11.1653},
}

\end{biblist}

\end{document}